\theoremstyle{plain}
   \newtheorem{theorem}{Theorem}[section]
   \newtheorem{proposition}[theorem]{Proposition}
   \newtheorem{lemma}[theorem]{Lemma}
   \newtheorem{corollary}[theorem]{Corollary}
\theoremstyle{definition}
   \newtheorem{definition}[theorem]{Definition} 
   \newtheorem{example}[theorem]{Example} 
   \newtheorem{question}[theorem]{Question}
   \newtheorem{remark}[theorem]{Remark}
\numberwithin{equation}{section}
\newcommand{\ZZ}{{\mathbb {Z}}}
\newcommand{\ch}{{\operatorname{ch}}}
\newcommand{\SSYT}{{\operatorname{SSYT}}}
\DeclareMathOperator{\flag}{\mathcal{F}\hspace{-1.6pt}\ell}
\newcommand{\CC}{{\mathbb{C}}}
\newcommand{\kk}{\Bbbk}
\DeclareMathOperator{\Gr}{Gr}
\begin{document}

\title{Cluster structures on spinor helicity and momentum twistor varieties}

\author{Lara Bossinger, Jian-Rong Li}
\address{Instituto de Matem\'{a}ticas Unidad Oaxaca, Universidad Nacional Aut\'{o}noma de M\'{e}xico, Le\'{o}n 2, altos, Centro Hist\'{o}rico, 68000 Oaxaca, Mexico}
\email{lara@im.unam.mx}
\address{Jian-Rong Li, Faculty of Mathematics, University of Vienna, Oskar-Morgenstern-Platz 1, 1090 Vienna, Austria}
\email{jianrong.li@univie.ac.at}
\date{}

%MSC 13F60

\begin{abstract}
We study the homogeneous coordinate rings of partial flag varieties and Grassmannians in their Plücker embeddings and exhibit an embedding of the former into the latter.
Both rings are cluster algebras and the embedding respects the cluster algebra structures in the sense that there exists a seed for the Grassmannian that restricts to a seed for the partial flag variety (i.e. it is obtained by freezing and deleting some cluster variables).

The motivation for this project stems from the application of cluster algebras in scattering amplitudes: spinor helicity and momentum twistor varieties describe massless scattering without assuming dual conformal symmetry. Both may be obtained from Grassmanninas which model the dual conformal case. They are instances of partial flag varieties and their cluster structures reveal information for the scattering amplitudes.
As an application of our main result we exhibit the relation between these cluster algebras.
\end{abstract}

% We study cluster structures on spinor helicity and momentum twistor varieties which describe the kinematic spaces of massless scattering (with or without dual conformal symmetry).
% Both are instances of partial flag varieties. \textcolor{red}{We exhibit embeddings of the cluster algebra of any partial flag variety into the cluster algebra of a sufficiently large Grassmannian and show how the former is obtained from the latter by freezing certain cluster variables and deleting some cluster variables, that is by means of a restricted seed. In particular, we obtain cluster structures on spinor helicity and momentum twistor varieties from cluster structures on Grassmannians.}
%We exhibit embeddings of the corresponding cluster algebras into cluster algebras of sufficiently large Grassmannians and show how the former is obtained from the latter by freezing certain cluster variables and deleting some cluster variables.

\maketitle

%\tableofcontents

\section{Introduction}

\subsection{Motivation from particle physics}
In the study of scattering amplitudes in quantum field theories for massless scattering the kinematic space modeling particle interactions is represented by two complex matrices $\lambda$ and $\tilde \lambda$ of size $2\times n$ with property that $\lambda\tilde\lambda^T$ is zero. 
This is known as the \emph{spinor-helicity formalism} \cite[Section 1.8]{Book:ScatAmplQFT}.
In this situation the $2\times 2$-minors of $\lambda$ are typically denoted by $\langle ij\rangle$ and the $2\times 2$-minors of $\tilde\lambda$ by $[ij]$ for $1\le i<j\le n$ (notice that $\langle ij\rangle=-\langle ji\rangle$ and similarly for $[ij]$).
Each class of minors satisfy \emph{quadratic Plücker relations} also known as \emph{Schouten identities}, namely for all $1\le i<j<k<l\le n$
\[
[ij][kl]-[ik][jl]+[il][jk]=0 \quad \text{and} \quad \langle ij\rangle\langle kl\rangle-\langle ik\rangle\langle jl\rangle+\langle il\rangle\langle jk\rangle=0
\]
and additionally \emph{momentum conservation} for all $1\le i,j\le n$
\[
\sum_{s=1}^n \langle is\rangle [sj]=0.
\]
Adding the genericity assumptions that $\lambda$ and $\tilde \lambda$ are of full rank the space determined by the above equations is known as the \emph{\bf spinor helicity variety} \cite{Bernd_spinor-helicity} and we denote it by $\mathcal{SH}_{n}$ (the notation $\mathcal{SH}_{n}$ corresponds to $SH(2,n,0)$ in \cite{Bernd_spinor-helicity}, the spaces of interest to the authors in \emph{loc.cit.} are generalizations).
It is shown in \cite[Proposition 2.5]{Bernd_spinor-helicity} that $\mathcal{SH}_n$ is isomorphic to the \emph{partial flag variety} $\flag_{2,n-2;n}$ (see \eqref{eq:def flag} for the definition) with isomorphism given by
\begin{eqnarray}\label{eq:SH to flag2,n-2,n}
    \langle i j \rangle \mapsto P_{ij},\quad \text{and} \quad [ij]\mapsto (-1)^{i+j-1}P_{[n]-\{i,j\}},
\end{eqnarray}
here $P_{ij}$ and $P_{[n]-\{i,j\}}$ denote \emph{Plücker coordinates} of $\flag_{2,n-2;n}$, see \eqref{eq:Plücker embed} below.
From the physics point of view the scattering process may also be described in terms of \emph{momentum twistors} $Z_1,\dots,Z_n\in \mathbb CP^3$ representing the particles in Minkowski space. 
In the planar model a fixed order among the $Z_i$ is assumed which yields a cyclic symmetry.
Interpreting the coordinates of $Z_i$ as column vectors we obtain a $4\times n$ matrix up to column rescaling.
If the system has \emph{dual conformal symmetry} a parametrization is given by maximal minors of this matrix 
\[
\langle ijkl\rangle:=\det(Z_iZ_jZ_kZ_l)
\]
representing a Plücker coordinate $P_{ijkl}$ in the Grassmannian $\Gr_{4;n}$ (up to sign).
If the system does not have dual conformal symmetry, i.e. it is \emph{non-dual conformal invariant} (or NDCI for short) we add an \emph{infinity twistor} in form of a line $\ell_{\infty}$. 
The line $\ell_{\infty}$ may be understood as the line spanned by two additional points $Z_{n+1},Z_{n+2}\in\mathbb CP^3$. 
In particular, we may now interpret the coordinates of $Z_1,\dots,Z_{n+2}$ as a $4\times (n+2)$ matrix.
However the kinematics do not depend on each of the two points $Z_{n+1},Z_{n+2}$ separately, but rather only on the line $\ell_{\infty}$: moving either point along the line (as long as it does not collide with the other) does not change the configuration.
This fact is reflected in the parametrization as follows:
consider only those minors $\langle ijkl\rangle $ that satisfy
\[
 n+1,n+2\in\{i,j,k,l\} \quad\text{or}\quad n+1,n+2\not\in\{i,j,k,l\}.
\]
The former are denoted by $\langle ij\rangle$ omitting the indices $n+1,n+2$ from the set.
More precisely, we define a map of polynomial rings 
\[
\phi_n:\kk[\langle ij \rangle,\langle ijkl\rangle:i,j,k,l\in [n]] \to \kk[z_{a,b}:a\in [4],b\in [n+2]]
\]
by $\langle ij\rangle \mapsto \det((z_{a,b})_{a\in[4],b\in \{i,j,n+1,n+2\}})$ and $\langle ijkl\rangle \mapsto \det((z_{a,b})_{a\in[4],b\in \{i,j,k,l\}})$. Let $J_n=\ker(\phi_n)$. The vanishing set of the ideal $J_n$ is naturally defined in a product of two projective spaces $\mathbb P^{\binom{n}{2}-1}\times \mathbb P^{\binom{n}{4}-1}$ determined by the bigrading on $\kk[\langle ij \rangle,\langle ijkl\rangle:i,j,k,l\in [n]]$.
We define the {\bf momentum twistor variety $\mathcal{MT}_n$} as $V(J_n)\subset \mathbb P^{\binom{n}{2}-1}\times \mathbb P^{\binom{n}{4}-1}$.
Identifying 
\begin{equation}\label{eq:MH to flag2,4,n}
\langle ij\rangle\mapsto P_{ij}, \quad \text{and}\quad \langle ijkl \rangle \mapsto P_{ijkl}, 
\end{equation}
yields an isomorphism between $\mathcal{MT}_n$ and the partial flag variety $\flag_{2,4;n}$ by Proposition \ref{proposition:embed}.

The change of parametrization between spinor helicity and momentum twistor variables is known, however highly nontrivial, see \emph{e.g.} \cite[Equation 5.49]{EH15}. 
Part of the map is given by
$\langle ij\rangle \mapsto \langle ij\rangle$ and 
\begin{eqnarray}\label{eq:MT to SH}
& \langle i-1,i, j, j+1\rangle\mapsto [ij]\langle ij\rangle\langle i-1,i\rangle\langle j, j+1\rangle,
\end{eqnarray}
see \cite[Equation 5.52]{EH15}.
In the case of $n=5$ there is a further isomorphism between the affine cone of $\mathcal{SH}_5$ and the affine cone of the Grassmannain $\Gr_{3;6}$, see e.g. \cite[Equation (6.12)]{Bossinger:2022_scattering_Grobner}.
To summarize, we have the following spaces modelling massless particle configurations and maps between them:
\begin{center}
\begin{tikzcd}
    \mathcal{SH}_n  \ar[d,"\eqref{eq:MT to SH}"] & \flag_{2,n-2;n} \ar[d,"\eqref{eq:MT to SH}"]\arrow{l}{\eqref{eq:SH to flag2,n-2,n}}[swap]{\sim}\ar[d]  \\
    \mathcal{MT}_n  & \flag_{2,4;n} \arrow{l}{\eqref{eq:MH to flag2,4,n}}[swap]{\sim} 
\end{tikzcd}
\end{center}

In recent years the computation of scattering amplitudes has been simplified by refraining from actually computing the Feynman integrals which are its summand, but rather computing the amplitude directly (\emph{e.g.} as the canonical form of the \emph{amplituhedron} \cite{Arkani-Hamed:2013jha, DFLP19, ELPTSW23, ELPTSW24}).
This has lead to a \emph{bootstrap}: the amplitude is expected to be a polylogarithmic function on the kinematic space with a certain singularity structure.
Predicticting the singularities eventually leads to a unique solution, see \emph{e.g.} \cite{BSS77, BDS05, CDMH16, CDFHM19, CDDHMP19, DDHMP17, DDH11, DDH12, DDHP13,  Drummond:2017ssj_cluster-adjacency, Drummond:2018dfd_beyondMHV, Drummond:2018caf_4loop, DDDP14, DH14, DHM16, DPS15, Golden:2013xva, GSO77, tHooft93}.
In the planar dual conformal case corresponding to $\Gr_{4;n}$ cluster algebras have been extremely useful as in many cases cluster variables are singularities of the amplitude. 
For non-planar non-dual cases however so far few is known, see \cite[Section 6]{Bossinger:2022_scattering_Grobner} for $n=5$. 

In this paper we explore the cluster structures of kinematic spaces in form of the spinor helicity variety $\mathcal{SH}_{n}\cong \flag_{2,n-2;n}$ and the momentum twistor variety $\mathcal{MT}_n \cong \flag_{2,4;n}$ and show how they can be obtained from the cluster structure on Grassmannians. Our results work for general partial flag varieties. 
To state our results, we first give some mathematical background.  

% \begin{theorem}\label{thm:MT cluster}
%     The momentum twistor variety has a cluster structure induced by the cluster structure on $\flag_{2,4;n}$. It may be obtained from the initial cluster of the Grassmannian $\Gr_{4;n+2}$ by two mutations followed by one freezing and three deletions.
%     %specializing three cluster variables to zero.
% \end{theorem}

% \begin{theorem}\label{thm:SH cluster}
%     The spinor helicity variety has a cluster structure, namely the one on $\flag_{2,n-2;n}$. 
%     It may be obtained from the initial cluster of the Grassmannian $\Gr_{n-2;2n-4}$ by $\frac{n(n-4)(n-5)}{2}$ mutations followed by freezing $n-5$ cluster variables and deleting $(n-3)(n-5)$ vertices.
% \end{theorem}
% These results are proven in \S\ref{sec:Cluster structures on MTn and SHn from Grassmannians}.
%Mathematically speaking these spaces are the flag varieties $\flag_{2,n-2;n}$ and $\flag_{2,4;n}$, respectively.
%In particular, the embedding $\flag_{2,4;n-2}\subset\Gr_{4;n}$ is the mathematical starting point of this project.

\subsection{Mathematical setup}
We refer to \cite[\S5]{Lakshmibai_GrassmannianV} and \cite{fulton_young} for more details on the material in this subsection.

For a positive integer $i$ write $[i]:=\{1,\dots,i\}$ and for $j>i$ write $[i,j]:=\{i,i+1,\dots,j\}$.
Let $n$ be a positive integer and let $1\le d_1<d_2<\dots<d_k< n$. To this data we associate the variety of partial flags of subspaces in $\kk^n$ ($\kk$ a field of characteristic zero):
\begin{eqnarray}\label{eq:def flag}
    \flag_{d_1,\dots,d_k;n}(\kk):=\left\{ 0\in V_1\subset V_2\subset \dots \subset V_k\subset \kk^n: \dim V_i=d_i\right\}.
\end{eqnarray}
When there is no confusion regarding the base field we drop it from the notation and simply write $\flag_{d_1,\dots,d_k;n}$. Moreover, it will be convenient to set $d_0:=0$ and $d_{k+1}:=n$.
If $k=1$ the associated partial flag variety is a \emph{Grassmannian} 
\[
\flag_{d;n}=\Gr_{d;n}:=\{0\in V\subset \kk^n: \dim V=d\}.
\]
%Let $N_i:=n+d_i-d_1$ and $I(d_i):=[N_i+1,N]$.
In this paper we are interested in a map between the (multi-)homogeneous coordinate rings of a partial flag variety in its Plücker embedding (recalled below), denoted $\kk[\flag_{d_1,\dots,d_k;n}]$ and the homogeneous coordinate ring of a Grassmannian in its Plücker embedding, denoted $\kk[\Gr_{d_k;N}]$ with $N=n+d_k-d_1$. 
%\textcolor{blue}{In  $\kk[\Gr_{d_k;N}]$ w_ define $J_{d_1,\dots,d_k;n}$ as the ideal generated by all Plücker coordinates $P_I$ for $I\subset [N]$ of cardinality $d_k$ satisfying
%\[
%[N_i+1,N]\not=(I\cap [N_i+1,N])\not=\varnothing.
%\]
We define
\begin{eqnarray}\label{eq:embed}
  \varphi^*: \kk[\flag_{d_1,\dots,d_k;n}]\to\kk[\Gr_{d_k;N}]%/J_{d_1,\dots,d_k;n},  
\end{eqnarray} 
given by the images of the Plücker coordinates with $I\subset [n]$ of cardinality $d_i, i\in[k]$ by
\[
%P_{I}\mapsto P_{I\cup[N_i+1,N]},
{P_{I}\mapsto P_{I\cup[n+1,n+d_k-d_i]}}.
\]
%with convention $[N+1,N]=\varnothing$. 
In Lemma~\ref{proposition:embed} we verify that the map is well defined.
We are slightly abusing notation by denoting Plücker coordinates of $\flag_{d_1,\dots,d_k;n}$ and $\Gr_{d_k,N}$ by the same symbol.
Both rings $\kk[\Gr_{d_k;N}]$ and $\kk[\flag_{d_1,\dots,d_k;n}]$ carry the structure of a cluster algebra \cite{FZ02}, see \cite{Sco06} for $\kk[\Gr_{d_k;N}]$ and \cite{GLS_partial-flags} for $\kk[\flag_{d_1,\dots,d_k;n}]$. Our main result is the following.

\begin{theorem}\label{thm:general flag}
    There exists a seed $s'$ for $\kk[\Gr_{d_k;N}]$ such that the initial seed of $\kk[\flag_{d_1,\dots,d_k;n}]$ is obtained from $s'$ by freezing and deleting.
\end{theorem}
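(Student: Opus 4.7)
The plan is to begin from an explicit initial seed of $\kk[\flag_{d_1,\dots,d_k;n}]$ whose cluster and frozen variables are Plücker coordinates $P_I$, with $I\subset[n]$ of cardinality $d_i$ for various $i\in[k]$; such a seed is provided by \cite{GLS_partial-flags}. Applying the index shift from~\eqref{eq:embed}, $I\mapsto I\cup[n+1,n+d_k-d_i]$, transports this initial seed to a collection $\mathcal{F}$ of $d_k$-subsets of $[N]=[n+d_k-d_1]$, and hence to a family of Plücker coordinates of $\kk[\Gr_{d_k;N}]$. The strategy is to exhibit a seed $s'$ of $\kk[\Gr_{d_k;N}]$ whose cluster and frozen variables contain $\{P_J:J\in\mathcal{F}\}$, and then to freeze or delete the remaining variables so that the underlying quiver reduces to the GLS quiver.

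The first technical step is to verify that $\mathcal{F}$ is weakly separated, with respect to the cyclic order on $[N]$, in the sense of Leclerc--Zelevinsky and Postnikov. This amounts to a direct combinatorial check exploiting the facts that the added indices $[n+1,N]$ form a single contiguous block of $[N]$, that each element of $\mathcal{F}$ adjoins a nested initial segment of this block to its $[n]$-part, and that the GLS initial Plücker cluster on $\flag_{d_1,\dots,d_k;n}$ is itself weakly separated in the linear-order sense. Granting this, by the results of Scott and of Oh--Postnikov--Speyer, $\mathcal{F}$ extends to a maximal weakly separated collection $\mathcal{F}'$ of $d_k$-subsets of $[N]$ corresponding to a plabic graph $\Sigma$ and defining a seed $s'$ of $\kk[\Gr_{d_k;N}]$.

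It then remains to show that the initial seed of $\kk[\flag_{d_1,\dots,d_k;n}]$ is recovered from $s'$ by freezing and deleting. The Plücker coordinates indexed by $\mathcal{F}'\setminus\mathcal{F}$ naturally split into two types: those whose index set interacts with $[n+1,N]$ in a way that pulls back under $\varphi^*$ to a unit, or to a monomial in the frozen variables of the flag variety, which should be frozen; and the remaining ones, which do not pull back to Plücker coordinates of $\flag_{d_1,\dots,d_k;n}$ and should be deleted. The main obstacle I foresee is matching the plabic quiver of $\Sigma$, restricted to the $\mathcal{F}$-vertices, with the GLS quiver on the initial seed: the two quivers originate in very different combinatorial data (a reduced word for a Weyl group element versus a plabic graph). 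I expect this to be controlled by choosing the completion $\mathcal{F}'$ — equivalently $\Sigma$ — deliberately, for instance by gluing sub-plabic graphs adapted to each flag step $d_{i-1}<d_i$, so that the GLS quiver becomes visibly a subquiver of $\Sigma$'s quiver. Once this combinatorial alignment is secured, the remaining verification that the exchange relations on both sides agree is straightforward.
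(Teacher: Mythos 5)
The proposal is built on a false premise: for a general partial flag variety with ``gaps'' in the dimensions (i.e. $\{d_1,\dots,d_k\}\neq[d_1,d_k]$) the Geiss--Leclerc--Schr\"oer initial seed is \emph{not} a collection of Pl\"ucker coordinates. As recorded in the paper (see the remark following Definition~\ref{def:balanced} and Proposition~\ref{prop:expansion of minors}), the lifts $\widehat{D}_{I_F}$ of initial minors whose index sets consist of two disjoint intervals are homogeneous binomials (or longer sums) of degree two in Pl\"ucker coordinates, e.g. in $\flag_{2,4;5}$ one has $\ch\bigl(\begin{smallmatrix}1&3\\2&4\\3&\\5&\end{smallmatrix}\bigr) = P_{1235}P_{34}-P_{1234}P_{35}$. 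Consequently, the step that shifts index sets by $I\mapsto I\cup[n+1,N]$ to produce a family $\mathcal{F}$ of $d_k$-subsets of $[N]$ does not see these variables at all, and the resulting $\mathcal{F}$ cannot contain the required cluster variables. Under $\varphi^*$ these initial minors map to elements of $\kk[\Gr_{d_k;N}]$ that are again genuine quadratics in Pl\"ucker coordinates (two-column tableaux, Corollary~\ref{cor:image of initial tableaux}), so they can never appear in any plabic seed, maximal weakly separated or otherwise. This is precisely why the weak-separation/plabic route is a dead end for the general theorem: the target seed $s'$ necessarily lies \emph{outside} the plabic subpattern of the Grassmannian.

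The paper's actual argument therefore starts from the standard rectangle seed of $\Gr_{d_k;N}$ (which \emph{is} plabic) and exhibits an explicit mutation sequence, organized in ``pages'' on rectangular subgrids adapted to each gap $d_{j-1}<d_j$, that produces a non-plabic seed $s'$ containing all the $\varphi^*$-images of the GLS initial variables. One then checks directly that, after freezing the appropriate vertices (including the images of $P_{[d_i]}$), the quiver splits so that the GLS quiver appears as a full subquiver connected to the rest only through frozens, which is exactly the requirement for a restricted seed in the sense of \cite[\S4.2]{FWZ_ch4-5}. If you want to salvage your idea, you would at minimum need to (i) replace the weakly-separated/plabic framework with a mechanism that can reach non-Pl\"ucker cluster variables, and (ii) supply the explicit identification of the quiver of $s'$ restricted to the $\varphi^*$-image with the GLS quiver --- which is the genuinely nontrivial content the paper handles via the explicit mutation sequence.
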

 
In technical terms the initial seed of $\kk[\flag_{d_1,\dots,d_k;n}]$ is a \emph{restricted seed} of $s'$ for $\kk[\Gr_{d_k;N}]$, see \S\ref{sec:mutation sequence} and \cite{FWZ_ch4-5}. 
Theorem \ref{thm:general flag} is be proved in \S\ref{sec:mutation sequence} by exhibiting an explicit mutation sequence.

The momentum twistor variety has a cluster structure induced by the cluster structure on $\flag_{2,4;n}$ and the spinor helicity variety has a cluster structure, namely the one on $\flag_{2,n-2;n}$. 
Theorem~\ref{thm:general flag} the following results regarding the momentum twistor variety $\mathcal{MT}_n$ and the spinor helicity variety $\mathcal{SH}_n$ (more  details in \S\ref{sec:Cluster structures on MTn and SHn from Grassmannians}):

\begin{theorem}\label{thm:MT cluster}
     The momentum twistor variety has a cluster structure induced by the cluster structure on $\flag_{2,4;n}$. It may be obtained from the initial cluster of the Grassmannian $\Gr_{4;n+2}$ by three mutations followed by one freezing and three deletions.    %specializing three cluster variables to zero.
\end{theorem}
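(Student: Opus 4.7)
The plan is to specialize Theorem~\ref{thm:general flag} to the parameters $k=2$, $d_1=2$, $d_2=4$. With these values $N = n + d_k - d_1 = n+2$, so the ambient Grassmannian is $\Gr_{4;n+2}$, and the embedding $\varphi^*$ of \eqref{eq:embed} becomes $P_{ij}\mapsto P_{ij(n+1)(n+2)}$ and $P_{ijkl}\mapsto P_{ijkl}$, matching the identification \eqref{eq:MH to flag2,4,n} between $\mathcal{MT}_n$ and $\flag_{2,4;n}$. By Theorem~\ref{thm:general flag} there is a seed $s'$ for $\kk[\Gr_{4;n+2}]$ whose restriction is the initial seed of $\kk[\flag_{2,4;n}]$; the remaining task is to pin down the length of the mutation sequence and the restriction data in this specific case.

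I would start from Scott's rectangles seed for $\Gr_{4;n+2}$ and follow the explicit mutation sequence from the proof of Theorem~\ref{thm:general flag}. In this seed the Plücker coordinates $P_{ijkl}$ with indices entirely in $[n]$ that are needed by $\varphi^*$ already appear, while the coordinates $P_{ij(n+1)(n+2)}$ needed to represent $P_{ij}\in\flag_{2,4;n}$ are missing. The next step is to verify that exactly three well-chosen mutations, each implementing a three-term Plücker exchange whose output is a coordinate of the form $P_{ij(n+1)(n+2)}$, suffice to introduce the missing coordinates; these three mutations produce the seed $s'$.

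Having $s'$, I would identify the restriction to $\flag_{2,4;n}$ explicitly. Precisely one cluster variable of $s'$ lies in the image of $\varphi^*$ but mutating at it would leave that image; this variable must be frozen and corresponds to a frozen variable of $\flag_{2,4;n}$ that is mutable in $\Gr_{4;n+2}$. Three further cluster variables of $s'$ lie outside the image of $\varphi^*$; being artifacts of the Grassmannian structure they must be deleted. The resulting seed is by construction the initial seed of $\kk[\flag_{2,4;n}]$, and transporting it through the isomorphism $\mathcal{MT}_n\cong\flag_{2,4;n}$ of Proposition~\ref{proposition:embed} yields the claimed cluster structure on $\mathcal{MT}_n$.

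The main obstacle will be verifying the specific counts. Identifying the exact three mutation directions in Scott's quiver requires care: they must be chosen so that the exchange relation introduces a coordinate of the form $P_{ij(n+1)(n+2)}$ rather than some other Plücker coordinate, and the three mutations must be compatible so that they can be performed in sequence without destroying one another's output. Likewise, checking that the restriction uses exactly one freezing and three deletions comes down to a careful comparison between the Plücker coordinates present in Scott's seed of $\Gr_{4;n+2}$ and the frozen and mutable variables of the GLS initial seed for $\flag_{2,4;n}$, using the asymmetry between the two auxiliary columns $n+1,n+2$ (which play the role of the infinity twistor) and the genuine columns indexed by $[n]$.
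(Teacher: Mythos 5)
Your freezing and deletion bookkeeping is essentially right, and you correctly specialize the parameters to $N=n+2$ and identify the overall strategy of restricting a seed of $\Gr_{4;n+2}$. However, there is a genuine conceptual error in how you describe the role of the three mutations.

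You claim that the rectangle seed for $\Gr_{4;n+2}$ is missing the coordinates $P_{ij(n+1)(n+2)}$ and that the three mutations should each be a three-term Pl\"ucker exchange producing such a coordinate. This is not the case. The rectangle seed already contains exactly the three coordinates $P_{n-1,n,n+1,n+2}$, $P_{1,n,n+1,n+2}$, $P_{1,2,n+1,n+2}$ as frozen vertices, and these are precisely the $\varphi^*$-images of the only $P_{ij}$-type variables (all frozen) appearing in the GLS initial seed for $\flag_{2,4;n}$. What is genuinely absent from the rectangle seed are not Pl\"ucker coordinates at all: they are the lifts $\widehat{D}_{I_F}$ of the two ``gapped'' minors from Proposition~\ref{prop:expansion of minors}, which by \eqref{eq:laplace initial minor} are quadratic polynomials in Pl\"ucker coordinates. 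In tableau language these are the two-column tableaux
\[
\begin{smallmatrix}1&n-2\\2&n-1\\3&n+1\\n&n+2\end{smallmatrix}\qquad\text{and}\qquad \begin{smallmatrix}1&1\\2&n-1\\3&n+1\\n&n+2\end{smallmatrix},
\]
one frozen and one mutable in $\flag_{2,4;n}$. The paper's three mutations $(11),(7),(11)$ (mutating the same vertex twice) produce exactly these two compound cluster variables, not three distinct Pl\"ucker coordinates, and then the first one is frozen. Your proposal, as written, would look for the wrong exchange relations and would not find a sequence of the claimed length. The remedy is to use Corollary~\ref{cor:image of initial tableaux} to identify the target tableaux and check which of the $4n-7$ rectangle-seed variables already coincide with images of initial tableaux of $\flag_{2,4;n}$: all but five do, and the mutations need only supply the two compound ones.
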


\begin{theorem}\label{thm:SH cluster}
     The spinor helicity variety has a cluster structure, induced by the one on $\flag_{2,n-2;n}$. 
     It may be obtained from the initial cluster of the Grassmannian $\Gr_{n-2;2n-4}$ by $\frac{1}{2}\left(n - 5\right)\, \left(n^2 - 10\, n + 30\right)$ mutations followed by freezing $n-5$ cluster variables and deleting $(n-3)(n-5)$ vertices.
 \end{theorem}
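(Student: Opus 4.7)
The plan is to apply Theorem~\ref{thm:general flag} to the flag variety $\flag_{2, n-2; n}$, which is identified with $\mathcal{SH}_n$ under the isomorphism~\eqref{eq:SH to flag2,n-2,n}. Setting $k = 2$, $d_1 = 2$, $d_2 = n - 2$, one has $N = n + d_k - d_1 = 2n - 4$, so the ambient Grassmannian is $\Gr_{n-2;\,2n-4}$. Theorem~\ref{thm:general flag} then produces a seed $s'$ for $\kk[\Gr_{n-2;\,2n-4}]$ whose restriction by freezing and deleting is the initial seed of $\kk[\flag_{2,n-2;n}]$; the induced cluster structure on $\mathcal{SH}_n$ is obtained by transport along~\eqref{eq:SH to flag2,n-2,n}.

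The freezing and deletion counts will follow from a dimension balance. The Scott initial seed of $\Gr_{n-2;\,2n-4}$ has $2n - 4$ frozen and $(n-3)^2$ mutable cluster variables, totalling $\dim \Gr_{n-2;\,2n-4} + 1 = (n-2)^2 + 1$. The GLS initial seed of $\flag_{2, n-2; n}$ has $n - 1$ mutable and $3n - 9$ frozen cluster variables, totalling $4n - 10 = \dim \flag_{2, n-2; n} + 2$. Since freezing preserves the total number of vertices while converting a mutable into a frozen one, the number to freeze is $(3n-9) - (2n-4) = n - 5$, and the number of (mutable) vertices to delete is then
\[
(n-3)^2 - (n-1) - (n-5) = (n-3)(n-5),
\]
in agreement with the statement.

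Finally, the mutation count is extracted from the explicit mutation sequence constructed in the proof of Theorem~\ref{thm:general flag} in~\S\ref{sec:mutation sequence}, specialized to $d_1 = 2$, $d_2 = n - 2$. The sequence consists of mutations at Plücker vertices indexed by a combinatorial region inside the $(n-2)\times(n-2)$ rectangle parametrizing the Scott initial cluster of $\Gr_{n-2;\,2n-4}$; the principal task is a direct box count. Organizing the sum by rows and evaluating the resulting arithmetic progression will yield $\tfrac{1}{2}(n - 5)(n^2 - 10n + 30)$. The main obstacle is correctly identifying the region arising in the general construction; once this is done the count itself is routine. I would validate the formula against the base cases $n = 5$, where it predicts the empty sequence consistent with the known isomorphism $\mathcal{SH}_5 \cong \Gr_{3;6}$ from~\cite{Bossinger:2022_scattering_Grobner}, and $n = 6$, where it returns $3$, matching Theorem~\ref{thm:MT cluster} via the coincidence $\flag_{2, n-2; n} = \flag_{2, 4; n}$ at $n = 6$.
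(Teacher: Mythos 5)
Your overall framework is right: Theorem~\ref{thm:general flag} with $d_1=2$, $d_2=n-2$, $N=2n-4$, and the base-case checks at $n=5,6$ are apt sanity tests. The deletion count via total vertex count
\[
\bigl[(n-2)^2+1\bigr]-\bigl[4n-10\bigr]=(n-3)(n-5)
\]
is valid. However, your intermediate census of the GLS initial seed for $\flag_{2,n-2;n}$ is wrong, and the derivation of the freezing count $n-5$ is not salvageable by dimension counting. The GLS seed has $n+1$ frozen vertices (the $n-1$ unbounded faces along the $y$-axis plus the $k=2$ extra frozens $v_{d_1},v_{d_2}$) and $3n-11$ mutable vertices (the bounded faces), not $3n-9$ frozen and $n-1$ mutable as you claim; see Figure~\ref{fig:initial seed for Fl268} for $n=8$, which exhibits $9$ frozen and $13$ mutable vertices. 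Your balance equation ``frozen($\flag$) $-$ frozen($\Gr$) $= n-5$'' therefore would give $5-n<0$ with the correct inputs. The reason your (wrong) numbers produce the correct answers is that you tacitly assume only mutable vertices are deleted, while in fact the deletion removes $2n-10$ frozen and $(n-5)^2$ mutable vertices (the ``other'' mutable subquiver and its adjacent frozens, as in the paper's construction). These two errors compensate exactly, but the method itself is unsound: the dimension balance alone leaves the system underdetermined, so the split between how many vertices are frozen versus deleted cannot be read off from counts. The paper obtains both the $n-5$ and the $(n-3)(n-5)$ from the explicit mutation grid and Lemma~\ref{lem:number of mutation steps for 2 step partial flag varieties} with $a=n-5$, $b=n-4$, $c=2$, which is also where the mutation count
\[
\tfrac{ac(c+1)}{2}+\tfrac{ab(a-1)}{2}=3(n-5)+\tfrac{(n-5)(n-4)(n-6)}{2}=\tfrac{1}{2}(n-5)(n^2-10n+30)
\]
comes from; your sketch defers this evaluation, which is fine, but you should carry out that computation and, more importantly, correct the vertex census and derive the freezing count from the construction rather than by numerology.
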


Cluster algebras of Grassmannians and flag varieties have been studied from several points of view including monoidal categorification \cite{HL10}. 
This has lead to a good combinatorial understanding of certain elements.
More precisely, there is a bijection between \emph{semistandard Young tableaux} \cite{fulton_young} (of suitable shape and up to equivalence of tableaux, see \S\ref{sec:initial tableaux}) and elements of the \emph{dual canonical basis} \cite{L90} (compare to \emph{crystal basis}
%independently \textcolor{red}{(I had a paper where I wrote ``independent'' but Lusztig sent an email to me and he said it is not independent. Maybe we need to remove the word ``independent'')} discovered (\textcolor{red}{maybe also remove ``discovered''?}) 
in \cite{Kashiwara_crystal}) sending $T$ to the basis element $\ch(T)$; 
this bijection is due to \cite{CDFL,Li20} based on the representation theory of quantum affine algebras \cite{CP91} and their relation to cluster algebras \cite{HL10}. 
An explicit expression of the dual canonical basis element denoted by $\ch(T)$ in terms of monomials in Plücker coordinates is given by the \emph{character formula}, see \eqref{eq:formula of chT for kU after simplification} and \eqref{eq:formula of ch(T) Grassmannian}. 
More precisely, the character of a tableau is an expression in \emph{standard monomials}, \emph{i.e.} monomials in Plücker coordinates whose index sets form the columns of a semistandard Young tableau \cite{LR_SMT}.
The character formula amounts to the change of basis from standard monomial basis to dual canonical basis.
In particular, cluster variables and more generally \emph{cluster monomials} (monomials in cluster variables that are all in one seed) are elements of the dual canonical basis \cite{FanQin17,KKOP_quantumaffine, KKOP_monoidal}.%\textcolor{red}{add citation Kashiwara-K.-Oh-P.}

The tableaux for the Grassmannian $\Gr_{d;n}$ are of rectangular shape with $d$ rows and entries in $[n]$, denoted $\SSYT_{d;n}$.
The tableaux for $\flag_{d_1,\dots,d_k;n}$ may have columns with $d_1,\dots,d_k$ many rows and entries in $[n]$, denoted $\SSYT_{d_1,\dots,d_k;n}$.
In Corollary~\ref{cor:initial tableau} we describe the tableaux of the initial cluster variables of $\flag_{d_1,\dots,d_k;n}$ explicitly, called the \emph{initial tableaux}.

The map \eqref{eq:embed} naturally induces a map 
\begin{eqnarray}\label{eq:map of tableaux}
    \phi:\SSYT_{d_1,\dots,d_k;n} \to \SSYT_{d_k;N}
\end{eqnarray}
by filling up all columns with $d_i<d_k$ many rows to columns with $d_k$ many rows by adding the entries $n+1,n+2,\dots,n+d_k-d_i$.
The images of the initial tableaux of $\flag_{d_1,\dots,d_k;n}$ are given in Corollary~\ref{cor:image of initial tableaux}.
The following result is a consequence of Theorem~\ref{thm:general flag} and we prove it in \S\ref{sec:initial tableaux}.

\begin{corollary}\label{cor:character compatinility initials}
   Let $1\le d_1<\cdots<d_k<n$ and consider the flag variety $\flag_{d_1,\ldots,d_k;n}$. Let $T$ be the tableaux of an initial cluster variable. Then
    \begin{eqnarray}\label{eq:compatibility of characters}
    \ch(\phi(T)))=\varphi^*(\ch(T)).
    \end{eqnarray}
\end{corollary}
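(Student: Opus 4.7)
The strategy is to reduce the corollary to the essentially trivial compatibility of $\phi$ and $\varphi^*$ on single-column tableaux. By Corollary~\ref{cor:initial tableau}, the tableau $T$ of any initial cluster variable of $\flag_{d_1,\ldots,d_k;n}$ consists of a single column whose entries form a subset $I \subset [n]$ of cardinality $d_i$ for some $i \in [k]$, and the corresponding initial cluster variable is the Plücker coordinate $P_I$.

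First I would establish the key lemma: for a single-column tableau $T$ with content $I$, the character formula \eqref{eq:formula of chT for kU after simplification} (and its Grassmannian specialisation \eqref{eq:formula of ch(T) Grassmannian}) collapses to $\ch(T) = P_I$. Single columns correspond to fundamental representations, whose dual canonical basis element is a single standard monomial, namely the Plücker coordinate $P_I$ itself. Working through the explicit sum in the character formula one checks that only one summand of weight equal to the content of $T$ survives and that its coefficient is $1$, so no correction terms from the change of basis from standard monomials to dual canonical basis appear.

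Next I would apply Corollary~\ref{cor:image of initial tableaux}: the image $\phi(T)$ is again a single-column tableau, namely the one with column $I \cup [n+1,\, n+d_k-d_i]$. By the same observation, $\ch(\phi(T)) = P_{I \cup [n+1,\,n+d_k-d_i]}$. On the other hand the definition \eqref{eq:embed} gives $\varphi^*(P_I) = P_{I \cup [n+1,\,n+d_k-d_i]}$. Chaining the three identities yields \eqref{eq:compatibility of characters}.

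The main obstacle, if any, is purely bookkeeping: to verify the single-column reduction rigorously from the precise form of the character formula recalled in the paper, and to confirm that the combinatorial recipe for $\phi$ on initial tableaux in Corollary~\ref{cor:image of initial tableaux} matches the algebraic prescription \eqref{eq:embed} for $\varphi^*$ on Plücker coordinates. No mutation is involved since we work at initial cluster variables only; Theorem~\ref{thm:general flag} is invoked only to know that the initial cluster variables are indeed Plücker coordinates and that $\varphi^*$ realises the embedding at the level of cluster structures, so that $\varphi^*(P_I)$ is genuinely the displayed Plücker coordinate in $\kk[\Gr_{d_k;N}]$ rather than a more complicated element.
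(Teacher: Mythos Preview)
Your proposal rests on a false premise. You claim that by Corollary~\ref{cor:initial tableau} every initial cluster variable of $\flag_{d_1,\ldots,d_k;n}$ has a single-column tableau and is a Pl\"ucker coordinate. This is precisely what Corollary~\ref{cor:initial tableau} does \emph{not} say: it exhibits the initial tableau~\eqref{eq:initial tableau} as a \emph{two-column} tableau, corresponding to the Laplace expansion~\eqref{eq:laplace initial minor} of the minor $\Delta_{[i_j,d_j]\cup[i_{j+1},d_{j+1}]}$ as a genuine sum of products of Pl\"ucker coordinates. As the paper notes explicitly (see the Remark preceding Definition~\ref{def:balanced}), for partial flag varieties with gaps between the $d_i$ some initial cluster variables are degree-two polynomials, not Pl\"ucker coordinates. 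Your single-column reduction therefore covers only the type-(a) index sets $[i_j,d_j]$ and leaves the type-(b) sets $[i_j,d_j]\cup[i_{j+1},d_{j+1}]$ unaddressed.

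For the two-column case the paper's proof proceeds quite differently. One first invokes Theorem~\ref{thm:general flag} (via the explicit mutation sequence in \S\ref{sec:mutation sequence}) to conclude that $\varphi^*(\ch(T))$ is a cluster variable in $\kk[\Gr_{d_k;N}]$, hence a dual canonical basis element. Its expansion in standard monomials is obtained by applying $\varphi^*$ term-by-term to~\eqref{eq:laplace initial minor}; since $\varphi^*$ sends standard monomials to standard monomials (this is exactly the compatibility of $\varphi^*$ with $\phi$ on columns), and since the standard-monomial expansion of a dual canonical basis element is unique, the result must equal $\ch(\phi(T))$. Thus Theorem~\ref{thm:general flag} is not mere bookkeeping here: it is what guarantees that $\varphi^*(\ch(T))$ lies in the dual canonical basis at all, without which the uniqueness argument has no traction.
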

% If Conjecture~\ref{conj:general flag} holds then it implies the more general statement:
% \begin{conjecture}\label{conj:character compatinility initials}
%     For an arbitrary partial flag variety $\flag_{d_1,\dots,d_k;n}$ and an arbitrary initial cluster variable $\ch(T)$ Equation~\eqref{eq:compatibility of characters} holds.
% \end{conjecture}

It would be insightful to see if this is true more generally for arbitrary dual canonical basis elements, that is:
\begin{question}\label{question}
    Given a tableaux $T$ for $\flag_{d_1,\dots,d_k;n}$ and the corresponding element of the dual canonical basis $\ch(T)$. Then $\phi(T)$ is a tableaux for $\Gr_{d_k;N}$ with dual canonical basis element $\ch(\phi(T))$. Is it true that Equation~\eqref{eq:compatibility of characters} holds in this generality?
\end{question}

%and arbitrary partial flag varieties.

\subsection{Structure} In \S\ref{sec:preliminary} we recall the cluster algebra structure on partial flag varieties due to Geiss, Leclerc and Schr\"{o}er \cite{GLS_partial-flags}. We extend their results by introducing the combinatorial gadget of \emph{pseudoline arrangements} in this context and slightly adapt their constructions to our needs. 
Further, in \S\ref{sec:tableaux} we recall the notion of tableaux that parametrize dual canonical basis elements \cite{CDFL}.
In this section we investigate the characters and tableaux of initial minors and how they behave under the embeddings into Grassmannians. The proof of Corollary~\ref{cor:character compatinility initials} is included in this section.
In \S\ref{sec:mutation sequence} we prove Theorem~\ref{thm:general flag} by exhibiting explicit mutation sequences in the corresponding Grassmannians. In \S\ref{sec:Cluster structures on MTn and SHn from Grassmannians}, we describe the mutation sequences for momentum twist varieties and spinor helicity varieties as an application of Theorem~\ref{thm:general flag}. 
  
\subsection*{Acknowledgements}
This project started at the Simons Center for Geometry and Physics workshop on \emph{Mathematical Aspects of N=4 Super-Yang-Mills Theory}, we are grateful to the organizers.
Moreover, we would like to thank James Drummond, \"{O}mer G\"{u}rdo\u{g}an, June Huh and Lauren Williams for their comments and fruitful discussions.
LB is grateful to Leonid Monin and Alfredo Nájera Chávez for discussions on the cluster structure of partial flag varieties.
JRL would like to thank the Galileo Galilei Institute for Theoretical Physics for the hospitality and the INFN for partial support during the completion of this work. 
LB is supported by the UNAM DGAPA PAPIIT project IA100724 and the CONAHCyT project CF-2023-G-106.
JRL is supported by the Austrian Science Fund (FWF): P-34602, Grant DOI: 10.55776/P34602, and PAT 9039323, Grant-DOI 10.55776/PAT9039323. 

\section{Cluster structure on partial flag varieties}\label{sec:preliminary}

Denote by $SL_n$ the special linear group of $n\times n$ matrices with entries in a field $\kk$. We fix the Borel subgroup of upper triangular matrices $B\subset SL_n$ and the subgroup of unipotent matrices $U\subset B$; analogously for the lower triangular case we denote $U^-\subset B^-\subset SL_n$. Additionally, let $T\subset B$ be the torus of diagonal matrices.
Given a parabolic subgroup $P^-\supset B^-$ of $SL_n$ the quotient $P^-\backslash SL_n$ is a partial flag variety $\flag_{d_1,\dots,d_k;n}$ for some $d_1,\dots,d_k$, so we write $P^-=P_{d_1,\dots,d_k;n}^-$.
We denote the unipotent radical of $P_{d_1,\dots,d_k;n}:=(P_{d_1,\dots,d_k;n}^-)^T$ by $U_{d_1,\dots,d_k;n}$. 
Then $U_{d_1,\dots,d_k;n}\subset U$ is an affine variety of the same dimension as $\flag_{d_1,\dots,d_k;n}$.
In fact we have an embedding $\iota:U_{d_1,\dots,d_k;n}\hookrightarrow \flag_{d_1,\dots,d_k;n}$ given by generating a flag from the rows of a matrix in $U_{d_1,\dots,d_k;n}$.
\begin{example}
    Consider $\flag_{2,4;5}$ and let $B\subset SL_5$ be upper triangular matrices. Then
    \[
    P^-_{2,4;5}=\left(\begin{matrix}
        *&*&0&0&0\\ *&*&0&0&0\\ *&*&*&*&0\\ *&*&*&*&0\\ *&*&*&*&*
        %*&*&*&*&* \\ *&*&*&*&* \\ 0&0&*&*&* \\ 0&0&*&*&* \\ 0&0&0&0&*
    \end{matrix}\right) \quad \text{and} \quad U_{2,4;5}=\left(\begin{matrix}
        1&0&*&*&* \\ 0&1&*&*&* \\ 0&0&1&0&* \\ 0&0&0&1&* \\ 0&0&0&0&1
    \end{matrix}\right).
    \]
    The embedding $U_{2,4;5}\hookrightarrow \flag_{2,4;5}$ sends a matrix $M\in U_{2,4;5}$ with rows $m_1,\dots,m_5$ to the flag
    \[
    \{0\}\subset \langle m_1,m_2\rangle \subset \langle m_1,\dots,m_4\rangle \subset \kk^5.
    \]
\end{example}

For the corresponding Lie algebras we use the notation: $\text{Lie}(SL_n)=\mathfrak{sl}_n$, $\text{Lie}(B)=\mathfrak b$ and $\text{Lie}(U)=\mathfrak n, \text{Lie}(T)=\mathfrak h$. The simple roots of $\mathfrak h$ are denoted by $\alpha_1,\dots,\alpha_{n-1}$ and the fundamental weights are $\omega_1,\dots,\omega_{n-1}$.

The coordinate ring $\kk[U]$ has a cluster algebra structure due to Berenstein, Fomin and Zelevisnky \cite{BFZ96, BFZ05}.
Their work was generalized to the case of $\kk[U_{d_1,\dots,d_k;n}]$ by Geiss, Leclerc and Schröer \cite{GLS_partial-flags}. 
%that is of main interest to us.
The cluster algebra structure on $\kk[U_{d_1,\dots,d_k;n}]$ lifts to a cluster algebra structure on $\kk[\flag_{d_1,\dots,d_k;n}]$ via the pullback of the embedding 
$U_{d_1,\dots,d_k;n}\hookrightarrow \flag_{d_1,\dots,d_k;n}$.
We recall the details in the following subsections.

\subsection{Plücker relations}
We start by recalling the Pl\"ucker embedding of a Grassmannian. Let $(e_i)_{1\le i\le n}$ denote the standard basis of $\kk^n$, so that 
\[
\{e_{i_1}\wedge\dots\wedge e_{i_d}\mid 1\leq i_1<i_2<\ldots <i_d\leq n\}
\]
is a basis of $\wedge^d\kk^n$. Let $(\wedge^d\kk^n)^*$ be the dual vector space, then the Pl\"ucker coordinate $P_{i_1,\dots,i_d}\in(\wedge^d\kk^n)^*$ for $1\leq i_1<i_2<\ldots <i_d\leq n$ is defined to be the basis element dual to $e_{i_1}\wedge\ldots \wedge e_{i_d}$.
For  $i_1, \ldots ,i_d\in [n]$ pairwise distinct, but not necessarily increasing, the Pl\"ucker coordinate $P_{i_1, \ldots, i_d}$ has the following property
\[
P_{\sigma(i_1),\dots,\sigma(i_d)}=(-1)^{\ell(\sigma)}P_{i_1,\dots,i_k} \quad  \hbox{ for all }\sigma \in S_n,
\]
where $S_n$ denotes the symmetric group. 
Let $\mathcal{S}(n,d)$ be the set of tuples $I=(i_1,\dots,i_d)$ with entries in $n$ and 
denote by $P_I=P_{i_1,\dots,i_d}$ the Pl\"ucker coordinate. Notice that $P_I=0$ if not all entries in $I$ are distinct.
%corresponding to a sequence $I=(i_1,\dots,i_k)\in\mathcal{S}(n,k)$. 
To simplify notation we index some Pl\"ucker coordinates by a set instead of a sequence which is interpreted as being indexed by the sequence obtained by arranging the elements of the set in an increasing order.
In this way we obtain the \emph{Pl\"ucker embedding}
\begin{equation}\label{eq:plückerembed}
\Gr_{d;n}\hookrightarrow \mathbb P(\wedge^d\mathbb \kk^n)\cong \mathbb P^{\binom{n}{d}-1}.
\end{equation}
The flag variety is embedded in the product of Grassmannians
\begin{eqnarray*}
\flag_{d_1,\dots,d_k;n} \hookrightarrow \Gr_{d_1;n}\times \Gr_{d_2;n}\times\dots\times\Gr_{d_k;n}.
\end{eqnarray*}
By composing the latter embedding with the embedding \eqref{eq:plückerembed} for each Grassmannian in the product, we get the \emph{Plücker embedding}
\begin{eqnarray}\label{eq:Plücker embed}
\flag_{d_1,\dots,d_k;n} \hookrightarrow \mathbb P^{\binom{n}{d_1}-1}\times \dots \mathbb P^{\binom{n}{d_k}-1}.
\end{eqnarray}
Denote by ${I}_{d_1,\dots,d_k;n}$ the (homogeneous) ideal of $\flag_{d_1,\dots,d_k;n}$ in $\kk[P_{I}: I\in\mathcal S(n,d_i),i\in[k]]$ with respect to this embedding. 
The ideal ${I}_{d_1,\dots,d_k;n}$ is generated by \emph{Plücker relations}, which are defined as follows: for each choice of $p\le q$ with $p,q\in \{d_1,\dots,d_k\}$, a number $s\in[p]$ and a pair of tuples $J\in \mathcal S(n,d_p),L\in\mathcal S(n,d_q)$
we define
\begin{equation}\label{eq:def Plücker rel}
R^s_{J,L}=P_JP_L-\sum_{1\le r_1<\dots<r_s\le d_q} P_{J'}P_{L'},
\end{equation}
where 
$L'=(L\setminus (l_{r_1},\dots,l_{r_s}))\cup (j_1,\dots, j_s)$ and
$J'= (J\setminus ( j_1,\dots,j_s))\cup(l_{r_1},\dots,l_{r_s})$ \cite[p.135, Equation (3)]{fulton_young}, \cite[Equation (1.1)]{Fe12}. 
Notice that this is a restricted set of Plücker relations in the sense that the exchange taking place among the index sets only involves the elements $j_1,\dots,j_s\in J$.

Before proceeding to verify that the map \eqref{eq:embed} is well defined, 
notice that $\kk[\flag_{d_1,\dots,d_k;n}]$ is $\mathbb Z_{\ge 0}^k$-multigraded. Denote by $\epsilon_1,\dots,\epsilon_k$ the standard basis of $\mathbb Z^k$. 
We have $\deg(P_{I})=\epsilon_{|I|}$.
Recall that $\varphi^*$ defined in \eqref{eq:embed} is induced by a map on the polynomial rings (which we also denote as $\varphi^*$ by abuse of notation)
\[
\varphi^*: \kk[P_I:I\in\mathcal S(n,d_i),i\in[k]]\to \kk[P_I:I\in\mathcal S(N,d_k)]
\]
given by $P_I\mapsto P_{I\cup[n+1,n+d_k-d_i]}$ for $I\in\mathcal{S}(n,d_i), i\in [k]$.

\begin{proposition}\label{proposition:embed}
We have the identity 
\begin{eqnarray}\label{eq:Plücker ideals}
I_{d_k;N}\cap \kk[P_{I\cup[n+1,n+d_k-d_i]}:I\in\mathcal{S}(n;d_i),i\in[k]] = \varphi^*(I_{d_1,\dots,d_k;n}),   
\end{eqnarray} 
and the map $\varphi^*:\kk[\flag_{d_1,\dots,d_k;n}]\to \kk[\Gr_{d_k;N}]$ is a homomphism of graded algebras and an embedding.
\end{proposition}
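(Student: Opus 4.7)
The plan is to establish three things in sequence: (i) the forward inclusion $\varphi^*(I_{d_1,\dots,d_k;n})\subseteq I_{d_k;N}$, which ensures $\varphi^*$ descends to the quotient rings; (ii) the reverse inclusion in \eqref{eq:Plücker ideals}, equivalent to injectivity of the induced quotient map; (iii) gradedness. As a preliminary, $\varphi^*$ is injective on the ambient polynomial rings because the assignment $(I,i)\mapsto I\cup[n+1,n+d_k-d_i]$ is injective: both $i$ and $I$ can be recovered from the image by intersecting with $[n]$ and $[n+1,N]$, and distinct Plücker coordinates of $\Gr_{d_k;N}$ are algebraically independent generators.

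For (i), I check the containment on each generator $R^s_{J,L}$ of $I_{d_1,\dots,d_k;n}$. Setting $p=|J|\leq q=|L|$, $\tilde J=J\cup[n+1,n+d_k-p]$, and $\tilde L=L\cup[n+1,n+d_k-q]$, a direct computation gives
\[
\varphi^*(R^s_{J,L}) \;=\; R^s_{\tilde J,\tilde L} \;\in\; I_{d_k;N}.
\]
The Grassmannian relation $R^s_{\tilde J,\tilde L}$ a priori sums over $1\leq r_1<\dots<r_s\leq d_k$ rather than $\leq q$, but whenever some $r_j>q$ the swapped element $\tilde l_{r_j}\in[n+1,n+d_k-q]\subseteq[n+1,n+d_k-p]$ already lies in $\tilde J$, producing a Plücker coordinate with a repeated index that vanishes. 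Only the terms with all $r_j\leq q$ survive, reproducing $\varphi^*(R^s_{J,L})$ exactly.

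For (ii), I invoke standard monomial theory. Both $\kk[\flag_{d_1,\dots,d_k;n}]$ and $\kk[\Gr_{d_k;N}]$ admit bases of standard monomials indexed by semistandard Young tableaux (of suitable shapes, with column lengths in $\{d_1,\dots,d_k\}$ respectively of length $d_k$). The tableau map $\phi$ of \eqref{eq:map of tableaux}, which appends $n+1,\dots,n+d_k-d_i$ to each column of length $d_i$, is injective and preserves semistandardness: appended entries $>n$ keep columns strictly increasing, and along any row the comparison through appended cells is automatic because the original shape has weakly decreasing column lengths left-to-right, so for adjacent columns $s,s+1$ of original lengths $l_s\geq l_{s+1}$, the appended entry $n+r-l_s$ in row $r>l_s$ of column $s$ is at most $n+r-l_{s+1}$, the appended entry in column $s+1$. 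Hence $\varphi^*$ sends flag standard monomials injectively into Grassmannian standard monomials; by linear independence the induced quotient map is injective, which gives the reverse inclusion.

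Gradedness is immediate: a flag Plücker coordinate of multidegree $\epsilon_i$ maps to a Plücker coordinate of single degree $1$, so $\varphi^*$ is graded under the collapse $\mathbb{Z}^k_{\geq 0}\to\mathbb{Z}_{\geq 0}$ summing components. Combining (i)--(iii) completes the proof. The main obstacle is step (ii): one must fix the standard-monomial basis conventions on flag varieties (classical but requiring care with jagged shapes, \emph{cf.} \cite{LR_SMT}) and verify the semistandardness-preservation claim cleanly. An alternative, purely geometric route constructs an embedding $\iota:\flag_{d_1,\dots,d_k;n}\hookrightarrow\Gr_{d_k;N}$ by augmenting a $d_k\times n$ matrix representing the flag with appropriately ordered constant standard-basis columns, whose pullback satisfies $\iota^*\circ\varphi^*=\pm\mathrm{id}$ on Plücker generators by Laplace expansion, also yielding injectivity.
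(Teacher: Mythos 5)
Your proof is correct. The gradedness and forward-inclusion arguments match the paper's essentially verbatim: you both compute $\varphi^*(R^s_{J,L})=R^s_{\tilde J,\tilde L}$ and observe that extra terms in the Grassmannian relation vanish from repeated indices in $[n+1,N]$. Where you genuinely diverge is in the reverse inclusion. The paper argues by asserting that the intersection $I_{d_k;N}\cap\kk[P_{I\cup[n+1,n+d_k-d_i]}]$ is generated by the relations of form \eqref{eq:special Plücker}; this is an elimination-type claim that is stated but not justified (it would typically need a Gröbner-basis or straightening-law argument to back it up). Your standard-monomial route fills exactly this gap: because the map $\phi$ on tableaux is injective and preserves semistandardness (a small but necessary lemma you verify correctly, including the row check across appended entries for adjacent columns of unequal length), $\varphi^*$ carries a basis of $\kk[\flag_{d_1,\dots,d_k;n}]$ injectively into a subset of a basis of $\kk[\Gr_{d_k;N}]$, giving injectivity of the quotient map directly. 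This is cleaner and more self-contained than the paper's assertion, at the cost of invoking the standard-monomial theorem for jagged shapes from \cite{LR_SMT}. Your sketched geometric alternative (augmenting the representing matrix with constant standard-basis columns and checking $\iota^*\circ\varphi^*=\pm\mathrm{id}$ by Laplace) would also work and is closer in spirit to how the variety-level embedding is usually set up, though one should be careful that signs from the Laplace expansion do not spoil well-definedness.
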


\begin{proof}
By definition, the images of Plücker coordinates in the flag are Plücker coordinates in the Grassmannian, in consequence the map is graded.
The algebra $\kk[\flag_{d_1,\dots,d_k;n}]$ is a quotient of the polynomial ring $\kk[P_I:I\in\mathcal S(n,d_i),i\in[k]]$ by the ideal $I_{d_1,\dots,d_k;n}$.
Similarly, $\kk[\Gr_{d_k;N}]$ is the quotient of the polynomial ring $\kk[P_I:I\in\mathcal S(N,d_k)]$ by the ideal $I_{d_k;N}$.
The map $\varphi^*$ on the polynomial rings is a well defined embedding.
We first show that the image of $I_{d_1,\dots,d_k;n}$ is contained in $I_{d_k;N}$ by proving it for each of the generators: let $p\le q$, $p,q\in \{d_1,\dots,d_k\}$,  $s\in[p]$ and $J\in \mathcal S(n,d_p),L\in\mathcal S(n,d_q)$ and compute
\[
\varphi^*(R^s_{J,L})=P_{J\cup[n+1,n+d_k-d_p]}P_{L\cup[n+1,n+d_k-d_q]}- \sum_{1\le r_1<\dots<r_s\le d_q} P_{J'\cup[n+1,n+d_k-d_p]}P_{L'\cup [n+1,n+d_k-d_q]} 
\]
We claim that the image coincides with the generator for $s,J\cup[n+1,n+d_k-d_p],L\cup[n+1,n+d_k-d_q]$ in $I_{d_k;N}$:
\begin{align}\label{eq:special Plücker}
\begin{split}
R^s_{J\cup[n+1,n+d_k-d_p],L\cup[n+1,n+d_k-d_q]} = \ & P_{J\cup[n+1,n+d_k-d_p]}P_{L\cup[n+1,n+d_k-d_q]}  \\ & - \sum_{1\le r_1<\dots<r_s\le d_k} P_{(J\cup[n+1,n+d_k-d_p])'}P_{(L\cup [n+1,n+d_k-d_q])'}.
\end{split}
\end{align}
The sum of the right hand side splits into two parts: either $1\le r_1<\dots<r_s\le d_q$ in which case 
\[
(J\cup[n+1,n+d_k-d_p])'=J'\cup[n+1,n+d_k-d_p]\text{ and }(L\cup [n+1,n+d_k-d_q])'=L'\cup [n+1,n+d_k-d_q];
\]
or there exists $r_j>d_q$ so that $\ell_{r_j}\in [n+1,n+d_k-d_q]$, hence $\ell_{r_j}\in (J\setminus (j_1,\dots,j_s))\cup [n+1,n+d_k-d_p]$ and therefore $P_{(J\cup[n+1,n+d_k-d_p])'}=0$. We deduce
\[
\varphi^*(R^s_{J,L})= R^s_{J\cup[n+1,n+d_k-d_p],L\cup[n+1,n+d_k-d_q]},
\]
which implies $\varphi^*(I_{d_1,\dots,d_k;n})\subset I_{d_k;N}$. 
To see that $\varphi^*$ is an embedding consider $\bar f\in\kk[\flag_{d_1,\dots,d_k;n}]$ with $\varphi^*(\bar f)=0$. Then $\bar f$ is represented by a polynomial $f\in \kk[P_I:I\in\mathcal S(n,d_i),i\in[k]]$ with $\varphi^*(f)\in I_{d_k;N}$, so 
\[
\varphi^*(f)\in I_{d_k;N}\cap \kk[P_{I\cup[n+1,n+d_k-d_i]}:I\in\mathcal{S}(n;d_i),i\in[k]].
\]
The RHS is generated by elements of form \eqref{eq:special Plücker} which coincides with the image of a generator of $I_{d_1,\dots,d_k;n}$. So we have the identity (\ref{eq:Plücker ideals}) and the second claim follows.
\end{proof}

Hence, geometrically we have a map between the affine cones
\[
\widetilde{\Gr}_{d_k;N} \to \widetilde{\flag}_{d_1,\dots,d_k;n}.
\]
Note that the map does not induce a map between the projective varieties due to the different gradings.

%\subsection{Cluster structure on partial flag varieties}
\subsection{Pseudoline arrangements}
For $\flag_{d_1,\dots,d_k;n}$ the \emph{pseudoline arrangement} (sometimes also known as \emph{wiring diagram}) $\mathcal{P}_{d_1,\dots,d_k;n}$ is a pictorial presentation of the permutation $\sigma\in S_n$ whose one-line presentation is 
\[
\sigma=[d_{k}+1,d_{k}+2,\dots,n,d_{k-1}+1,d_{k-1}+2\dots, d_{k},\dots,1,\dots,d_1].
\]
Notice that $\sigma$ is the permutation corresponding to the minimal representative of the coset of the longest word in $S_n/(s_i:i\not\in\{d_1,\dots,d_k\})$.

\medskip
\noindent
{\bf Algorithm 1: Pseudoline arrangement}

We draw $\mathcal P_{d_1,\dots,d_k;n}$ inside a two-dimeinsional positive orthant.
\begin{enumerate}
\item Label the $x$- and $y$-axes by $1,\dots,n$
%and the $y$-axis by $\sigma(1)=d_k+1,\dots,\sigma(n)=d_1$.
%Draw nodes labelled by $1,\dots,n$ (left to right) on a horizontal line and $n$ nodes labelled $\sigma(1),\dots,\sigma(n)$ on a vertical line as in a positive orthant.
\item For each $i$ draw a line segment from $(i,0)$ to $(i,\sigma(i))$ and another line segment from $(0,\sigma(i))$ to $(i,\sigma(i))$. 
The union of the two line segments is called the \emph{pseudoline} (or \emph{wire}) $\ell_i$.
%$\ell_i$ with starting po connecting $i$ with $i=\sigma(j)$ (for some $j$) in the following way: the lines $\ell_{d_i+1},\ell_{d_i+2}\dots,\ell_{d_{i+1}}$ do not cross; they start vertically and then turn once they reach the height of the vertical nodes $d_i+1,\dots,d_{i+1}$. 
\end{enumerate}

An example of the resulting pseudoline arrangement $\mathcal P_{2,5;7}$ is given in Figure~\ref{fig:P and Q257}.
%Notice that every square of $Gamma_{d_1,\dots,d_k;n}$ gives rise to a bounded face of $\mathcal P_{d_1,\dots,d_k;n}$. We frequently make use of this observation.

\subsection{Quivers} We associate a quiver $Q_{d_1,\dots,d_k;n}$ to $\mathcal P_{d_1,\dots,d_k;n}$ that determines a seed in the cluster structure of the multi-homogeneous coordinate ring $\kk[\flag_{d_1,\dots,d_k;n}]$ with respect to the Pl\"ucker embedding, compare to \cite[\S9.3.2]{GLS_partial-flags}.

\medskip
\noindent
{\bf Algorithm 2: From pseudoline arrangement to quiver}
\begin{enumerate}
\item {\bf Vertices of $Q_{d_1,\dots,d_k;n}$:}
    \begin{enumerate}
    \item mutable vertices of  $Q_{d_1,\dots,d_k;n}$ correspond to bounded faces of $\mathcal P_{d_1,\dots,d_k;n}$; %(in particular, every square in $Gamma_{d_1,\dots,d_k;n}$ corresponds to a mutable vertex);
    \item there are two types of frozen vertices: $n-1$ of them correspond to the unbounded faces along the $y$-axis; additionally there are $k$ frozen vertices, we denote them by $v_{d_1},\dots,v_{d_{k}}$.      
    \end{enumerate}
\item {\bf Arrows of $Q_{d_1,\dots,d_k;n}$:}
There are four types of arrows:
\begin{enumerate}
    \item from left to right perpendicular to a vertical straight lines segment connecting adjacent faces of $\mathcal{P}_{d_1,\dots,d_k;n}$;
    \item from top to bottom perpendicular to a horizontal straight line segment connecting adjacent faces of $\mathcal{P}_{d_1,\dots,d_k;n}$;
    \item diagonally from bottom right to top left through a crossing of two straight line segments connecting faces of $\mathcal{P}_{d_1,\dots,d_k;n}$ that share a vertex;
    \item arrows to and from the extra frozen vertices $v_{d_1},\dots,v_{d_k}$: there is an arrow from the face bounded by $\ell_{d_i-1},\ell_{d_i}$ vertically and by $\ell_{d_i+1},\ell_{d_i+2}$ horizontally to the vertex $v_{d_i}$, and an arrow from $v_{d_i}$ to the face bounded by $\ell_{d_i}$ on the left, by $\ell_{d_i+1}$ on the top and right (this is where $\ell_{d_i+1}$ bends) and by $\ell_{d_{i+2}}$ on the bottom. 
\end{enumerate}
\end{enumerate}

The quiver $Q_{2,5;7}$ is depicted in {Figure~\ref{fig:P and Q257}}. The frozen vertices $v_{2},v_5$ are labelled $\omega_2,\omega_5$, respectively.
 \begin{figure}
     \centering
 \adjustbox{scale=0.55}{\begin{tikzcd}
 	7=\sigma(2) &&&& {} \\
 	\\
 	6=\sigma(1) && {} \\
 	& \boxed{\{1,2\}} && \boxed{\{2\}} && \boxed{\omega_2} \\
 	5=\sigma(5) &&&&&&&&&& {} \\
 	& \boxed{\{1,2,5\}} && {\{2,5\}} &&&&&& {\{5\}} \\
 	4=\sigma(4) &&&&&&&& {} \\
 	& \boxed{\{1,2,4,5\}} && {\{2,4,5\}} &&&& {\{4,5\}} \\
 	3=\sigma(3) &&&&&& {} \\
 	& \boxed{\{1,2,3,4,5\}} && {\{2,3,4,5\}} && {\{3,4,5\}} \\
 	2=\sigma(7) &&&&&&&&&&&&&& {} \\
 	& \boxed{\{1,2,3,4,5,7\}} && {\{2,3,4,5,7\}} && {\{3,4,5,7\}} && {\{4,5,7\}} && {\{5,7\}} && \boxed{\omega_5} \\
 	1=\sigma(6) &&&&&&&&&&&& {} \\
 	\\
 	&& 1 && \boxed{2} && 3 && 4 && \boxed{5} && 6 && 7 \\
  \arrow[teal, no head, from=1-1, to=15-5, rounded corners, to path=-| (\tikztotarget)]
   %\arrow[teal, no head, from=1-1, to=1-5]
 	%\arrow[teal,no head, from=1-5, to=15-5]
 \arrow[teal,no head, from=3-1, to=15-3,rounded corners, to path=-| (\tikztotarget)]
    %\arrow[teal,no head, from=3-1, to=3-3]
 	%\arrow[teal,no head, from=3-3, to=15-3]
 \arrow[teal,no head, from=5-1, to=15-11,rounded corners, to path=-| (\tikztotarget)]
    %\arrow[teal,no head, from=5-1, to=5-11]
 	%\arrow[teal,no head, from=5-11, to=15-11]
 \arrow[teal,no head, from=7-1, to=15-9,rounded corners, to path=-| (\tikztotarget)]
    %\arrow[teal,no head, from=7-1, to=7-9]
 	%\arrow[teal,no head, from=7-9, to=15-9]
 \arrow[teal,no head, from=9-1, to=15-7,rounded corners, to path=-| (\tikztotarget)]
    %\arrow[teal,no head, from=9-1, to=9-7]
 	%\arrow[teal,no head, from=9-7, to=15-7]
  \arrow[teal,no head, from=11-1, to=15-15,rounded corners, to path=-| (\tikztotarget)]
 	%\arrow[teal,no head, from=11-1, to=11-15]
 	%\arrow[teal,no head, from=11-15, to=15-15]
  \arrow[teal,no head, from=13-1, to=15-13,rounded corners, to path=-| (\tikztotarget)]
 	%\arrow[teal,no head, from=13-1, to=13-13]
 	%\arrow[teal,no head, from=13-13, to=15-13]
 	\arrow[from=12-6, to=10-4]
 	\arrow[from=12-2, to=12-4]
 	\arrow[from=12-4, to=12-6]
 	\arrow[from=12-6, to=12-8]
 	\arrow[from=12-8, to=12-10]
 	\arrow[from=10-4, to=12-4]
 	\arrow[from=8-4, to=10-4]
 	\arrow[from=6-4, to=8-4]
 	\arrow[from=4-4, to=6-4]
 	\arrow[from=6-2, to=6-4]
 	\arrow[from=6-4, to=4-2]
 	\arrow[from=6-4, to=6-10]
 	\arrow[from=8-2, to=8-4]
 	\arrow[from=10-4, to=8-2]
 	\arrow[from=10-2, to=10-4]
 	\arrow[from=10-4, to=10-6]
 	\arrow[from=10-6, to=8-4]
 	\arrow[from=8-4, to=6-2]
 	\arrow[from=12-8, to=10-6]
 	\arrow[from=8-4, to=8-8]
 	\arrow[from=8-8, to=12-8]
 	\arrow[from=8-8, to=6-4]
 	\arrow[from=10-6, to=12-6]
 	\arrow[from=6-10, to=4-4]
 	\arrow[from=6-10, to=12-10]
 	\arrow[from=12-10, to=8-8]
  \arrow[from=12-10, to=12-12]
  \arrow[from=8-4, to=4-6]
  \arrow[from=4-6, to=10-6]
 \end{tikzcd}}
 \caption{The pseudoline arrangement $\mathcal P_{2,5;7}$ and its quiver $Q_{2,5;7}$ together with the index sets $I_F$ of the initial minors and the additional frozen vertices corresponding to $d_1=2,d_2=5$ labelled by their weights (as is \S\ref{sec:U}), $\omega_2$ respectively $\omega_5$. }
     \label{fig:P and Q257}
 \end{figure}
Every face of the pseudoline arrangement $\mathcal P_{d_1,\dots,d_k;n}$ can be associated with a minor in $\kk[U]$ \cite{BFZ96}.
The minors are of form form $D_{I,J}$ with $I,J\subset [n]$ of the same size. 
In our case, the column index set $J$ is always of form $\{n-|I|-1,\dots,n\}$.
We associate index sets $I_F$ to faces $F$ of $\mathcal P_{d_1,\dots,d_k;n}$: 
%$I$ is given by the indices of lines passing north east of the face $F$.
%\footnote{Usually one would consider all lines passing below the given face. Our pseudoline arrangements are 135° tilted to the left, so the usual convention of lines passing "below" translated to lines passing "north east" of the given face.}.
%That is, given a face $F$ of $\mathcal P_{d_1,\dots,d_k;n}$ we set
\begin{eqnarray}\label{def:IF}
    I_F:=\{i:\ell_i \text{ passes north-east of }F\}\quad \text{and} \quad D_{I_F}:=D_{I_F,\{n-|I_F|+1,\dots,n\}}. 
\end{eqnarray}

\medskip
\noindent
\subsection{\texorpdfstring{Actions on $\kk[U]$}{Actions on kk[U]}} \label{sec:U} \cite[\S2.1]{GLS_partial-flags}
Recall that there is a left and right action of $U$ on $\kk[U]$ given by
\[
(x\cdot f)(n)=f(nx) \quad \text{and} \quad (f\cdot x)(n)=f(xn), \quad \text{for all } f\in \kk[U],x,n\in U.
\]
Differentiating these actions we obtain a left and right action of $\mathfrak n$ on $\kk[U]$.
Recall that $\mathfrak n$ has a basis given by elementary matrices $E_{i,j}$ with $i<j$. Define the \emph{Chevalley generators} $e_i:=E_{i,i+1}$ for $1\le i\le n-1$.
Set $e_i^\dagger(f):=f\cdot e_i$.
Given a minor $D_{I,\{n-d-1,\dots n\}}=:D_I$ it is not hard to see that
\begin{eqnarray}\label{eq:action of n}
\begin{matrix}
e_i^\dagger (D_I)= 0 & \text{ if }& i\not\in I, \text{ or } i,i+1 \in I,\\
e_i^\dagger (D_I)\not=0 & \text{ if } &i\in I\not\ni i+1.
\end{matrix}
\end{eqnarray}
Additionally we have $(e_i^\dagger)^2(D_I)=0$ for all $i$ and $I$.
The coordinate ring $\kk[U_{d_1,\dots,d_k;n}]$ can be identified with a subalgebra of $\kk[U]$:
\begin{equation}\label{eq:kUd in kU}
    \kk[U_{d_1,\dots,d_k;n}]\cong \{f\in \kk[U]: e_j^\dagger(f)=0 \forall j\not \in \{d_1,\dots,d_k\}\}.
\end{equation}
Hence, we also have an $\mathfrak n$-action on $\kk[U_{d_1,\dots,d_k;n}]$.
%Recall from \cite[\S2.2]{GLS_partial-flags} that $U_{d_1,\dots,d_k;n}$ is a dense open subset of $\flag_{d_1,\dots,d_k;n}$. 
%Hence, there is a projection between the corresponding rings.

The multi-homogeneous coordinate ring $\kk[\flag_{d_1,\dots,d_k;n}]$ is multigraded by a submonoid of the weight lattice of the Cartan subalgebra of diagonal matrices $\mathfrak{h}\subset \mathfrak{sl}_n$. 
More precisely, let $\omega_1,\dots,\omega_{n-1}$ denote the fundamental weights. 
Then $\kk[\flag_{d_1,\dots,d_k;n}]$ is graded by $\bigoplus_{i\in \{d_1,\dots,d_k\}}\mathbb Z_{\ge 0}\omega_i$.
In \cite[Proof of Lemma 2.4]{GLS_partial-flags} the authors construct a lift $\hat f\in \kk[\flag_{d_1,\dots,d_k;n}]$ for every $f\in \kk[U_{d_1,\dots,d_k;n}]$ uniquely determined by
\begin{enumerate}
    \item $\iota^*(\hat f)=f$, where $\iota:U_{d_1,\dots,d_k;n}\hookrightarrow \flag_{d_1,\dots,d_k;n}$;
    \item $\hat f$ is homogeneous;
    \item $\hat f$ is of minimal degree with respect to (1) and (2).
\end{enumerate}
%\textcolor{red}{(What is the relation between $\flag_{d_1,\dots,d_k;n}$ and $U_{d_1,\dots,d_k;n}$)}
As the degree of an element $\kk[\flag_{d_1,\dots,d_k;n}]$ is a weight of $\mathfrak{h}$ we use the term weight to refer to the degree.
The weight of $\hat f$ is 
\[
\lambda(\hat f)=\lambda(f):=\sum_{i\in \{d_1,\dots,d_k\}}a_i(f)\omega_i,  \quad \text{where}\quad  a_i(f):=\max\{s:(e_i^\dagger)^s(f)\not =0\}.
\]
In particular, as $(e_i^\dagger)^2(D_I)=0$ for all $i$ and $I$ we have 
\begin{eqnarray}\label{eq:wt DI}
\lambda(D_I)=\sum_{i\in \{d_1,\dots,d_k\}, i\in I\not\ni i+1} \omega_i.    
\end{eqnarray}

Therefore weights of minors ${D}_{I_F}$ corresponding to faces $F\in \mathcal P_{d_1,\dots,d_k;n}$ are encoded in the pseudoline arrangement as follows:

\begin{lemma}\label{lem:weight}
Let $F$ be a face of $\mathcal P_{d_1,\dots,d_k;n}$ and $D_{I_F}$ the corresponding minor. 
Then $a_i(D_{I_F})=1$ if and only if the line $\ell_i$ passes north-east of $F$ and the line $\ell_{i+1}$ passes south-west of $F$.
In particular, $a_i(D_{I_F})=1$ if and only if $F$ lies in between the horizontal segments of the pseudolines $\ell_{d_{i}}$ and $\ell_{d_{i+1}}$. 
%of $\mathcal P_{d_1,\dots,d_k;n}$ (recall that $d_{k+1}:=n$ and $d_0:=d_1-1$).}
\end{lemma}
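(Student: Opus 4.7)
The plan is to deduce both parts from \eqref{eq:action of n} and \eqref{def:IF}, with the second assertion obtained by translating the resulting set-theoretic condition on $I_F$ into the coordinate geometry of the pseudoline arrangement.

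For the first biconditional, I would begin with the observation that $(e_i^\dagger)^2(D_I)=0$ for every minor $D_I$ and every $i$, so $a_i(D_I)\in\{0,1\}$ and $a_i(D_I)=1$ iff $e_i^\dagger(D_I)\neq 0$. By \eqref{eq:action of n} this occurs precisely when $i\in I$ and $i+1\notin I$. Specializing to $I=I_F$ and unpacking \eqref{def:IF}, the condition $i\in I_F$ is that $\ell_i$ passes north-east of $F$; since every pseudoline passes either north-east or south-west of any face disjoint from it, the condition $i+1\notin I_F$ is equivalent to $\ell_{i+1}$ passing south-west of $F$. Combining these gives the first part.

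For the ``in particular'' I would translate the condition into coordinate inequalities for any interior point $(x_F,y_F)$ of $F$. The L-shape of $\ell_j$ divides the first quadrant into the rectangle $[0,j]\times[0,\sigma(j)]$ (the SW side of $\ell_j$) and its complement; hence $\ell_i$ passes north-east of $F$ reads $x_F<i$ and $y_F<\sigma(i)$, while $\ell_{i+1}$ passes south-west of $F$ reads $x_F\ge i+1$ or $y_F\ge\sigma(i+1)$. Since $x_F<i<i+1$, the latter forces $y_F\ge\sigma(i+1)$, so the joint condition becomes
\[
x_F<i \qquad\text{and}\qquad \sigma(i+1)\le y_F<\sigma(i).
\]
Because the horizontal segments of $\ell_i$ and $\ell_{i+1}$ sit at heights $\sigma(i)$ and $\sigma(i+1)$ and extend back to the $y$-axis, this is exactly the statement that $F$ lies in the horizontal strip between them. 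A short inspection of the block structure of the permutation $\sigma$ (its values strictly increase within each block of length $d_{p+1}-d_p$ and jump down at the block boundaries) shows that $\sigma(i+1)<\sigma(i)$ happens exactly at the descents of $\sigma$, which match the indices appearing in the support of \eqref{eq:wt DI}; outside this set the strip is empty and $a_i(D_{I_F})=0$, consistent with the first part.

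The main obstacle---such as it is---is being precise about the NE/SW convention for an L-shaped pseudoline so that the coordinate translation is consistent with Algorithm 1 and \eqref{def:IF}; once this is fixed, the rest is a direct verification.
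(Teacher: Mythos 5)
Your argument is correct and takes essentially the same route as the paper's one-line proof, which invokes \eqref{eq:wt DI} directly; you simply unpack the same content via $(e_i^\dagger)^2(D_I)=0$, the vanishing criterion \eqref{eq:action of n}, and the coordinate geometry of the L-shaped pseudolines from \eqref{def:IF}. A minor remark: your coordinate computation produces the strip between the horizontal segments of $\ell_{d_j}$ and $\ell_{d_j+1}$ (the two pseudolines that actually cross, since $d_j$ is a descent of $\sigma$), which is the version consistent with the first biconditional whenever $d_{j+1}>d_j+1$; the subscript $\ell_{d_{i+1}}$ in the lemma's displayed text should therefore be read as $\ell_{d_i+1}$.
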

\begin{proof}
Both claims follow directly from \eqref{eq:wt DI}.
In particular, the faces between between the horizontal segments of the pseudolines $\ell_{d_{i}}$ and $\ell_{d_{i+1}}$ are the only ones satisfying $d_i\in I_F$ and $d_{i}+1\not\in I_F$.
\end{proof}

Notice that the pseudolines $\ell_{i}$ and $\ell_{i+1}$ cross if and only if $i\in \{d_1,\dots,d_k\}$.
\begin{comment}
Based on Lemma~\ref{lem:weight} we define the \emph{region of weight $\omega_i$} inside $\mathcal P_{d_1,\dots,d_k;n}$ by
\begin{eqnarray}\label{eq:R omega_i}
\mathcal R_{\omega_i}:=\bigcup_{F:a_i(D_{I_F})=1} F.
\end{eqnarray}    
\end{comment}

\subsection{Cluster variables} We extend $Q_{d_1,\dots,d_k;n}$ to a seed for $\kk[\flag_{d_1,\dots,d_k;n}]$ by associating cluster variables with its vertices. 
As $\kk[\flag_{d_1,\dots,d_k;n}]$ is graded the main challenge is to verify that the mutation relations are homogeneous.
For a vertex $v_F$ corresponding to a face $F$ of $\mathcal P_{d_1,\dots,d_k;n}$ the cluster variable is the lift $\widehat{D}_{I_F}$ of the corresponding minor $D_{I_F}\in \kk[U]$, which we describe explicitly in terms of Plücker coordinates subsequently in \S\ref{sec:initial tableaux}.
To a frozen vertex $v_{d_i}$ we associate the Pl\"ucker coordinate $P_{[d_i]}$, which is of weight $\omega_{d_i}$ in $\kk[\flag_{d_1,\dots,d_k;n}]$.
Denote the resulting cluster by ${\bf x}_{d_1,\dots,d_k;n}$.

\begin{remark}
For partial flag varieties without \emph{gaps} in dimension, that is $\{d_1,\dots,d_k\}=[d_1,d_k]$, all lifts $\widehat{D}_{I_F}$ are Pl\"ucker coordinates. In particular, the case of the full flag variety and the Grassmannain are included in this class.
This is not the case for a general partial flag variety with \emph{gaps}, the lifts may be homogeneous binomials in Pl\"ucker coordinates of degree two, see Proposition~\ref{prop:expansion of minors}.
It is worth noting that the initial seed for the Grassmannian obtained from its pseudoline arrangement coincides with the ``standard" or ``rectangle" initial seed.
\end{remark}

\begin{definition}\label{def:balanced}
A vertex $v$ of $Q_{d_1,\dots,d_k;n}$ is called \emph{balanced} if
\[
\sum_{v_F\to v} \lambda(\widehat{D}_F) = \sum_{v\to v_{F'}} \lambda(\widehat{D}_{F'}).
\]
The quiver $Q_{d_1,\dots,d_k;n}$ is called \emph{balanced} if all of its mutable vertices are.
\end{definition}

The following corollary is a consequence of Lemma~\ref{lem:weight}.

\begin{figure}
    \centering
\includegraphics[width=0.5\textwidth]{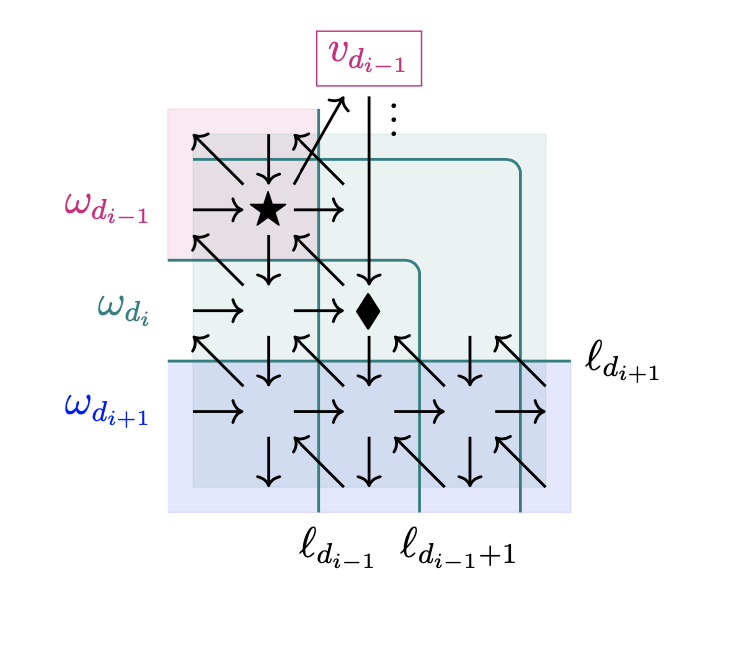} 
    \caption{The shading of the regions indicates the weight of the associated minor. Notice that the vertices $\bigstar$ and $\blacklozenge$ are non-balanced while all others are balanced. 
    }
    \label{fig:weights}
\end{figure}

\begin{corollary}\label{cor:generically balanced}
Let $v\in Q_{d_1,\dots,d_k;n}$ be a mutable vertex.
Then $v$ is balanced.
\end{corollary}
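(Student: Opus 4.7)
The plan is to verify the balance at $v_F$ one fundamental weight at a time. Since $\lambda(\widehat{D}_{I_F}) = \sum_{i \in \{d_1, \dots, d_k\}} a_i(D_{I_F}) \omega_i$ with $a_i \in \{0,1\}$ by \eqref{eq:wt DI}, and $\lambda(P_{[d_j]}) = \omega_{d_j}$, it is enough to fix $i = d_j$ and prove
$$ \#\{v \to v_F : a_i(D_{I_v}) = 1\} = \#\{v_F \to w : a_i(D_{I_w}) = 1\},$$
where the frozen vertex $v_{d_j}$ contributes to the count on the appropriate side according to its adjacency to $v_F$. By Lemma~\ref{lem:weight}, the faces with $a_i = 1$ form the rectangular strip $R_i$ bounded by the horizontal segment of $\ell_i$ on top, the horizontal segment of $\ell_{i+1}$ on the bottom, the vertical segment of $\ell_i$ on the right, and the $y$-axis on the left.

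Given this, I would proceed by a local analysis at $v_F$. For each arrow at $v_F$ of type (a), (b), or (c), its other endpoint is a uniquely determined neighbor face of $F$; whether that endpoint lies in $R_i$ is governed by whether the edge (or corner) of $\mathcal{P}_{d_1,\dots,d_k;n}$ producing the arrow crosses one of the pseudolines bounding $R_i$. The plan is to case-split on the position of $F$ relative to $R_i$: if $F$ is strictly inside or strictly outside $R_i$ and far from its boundary, then all relevant neighbors share $F$'s $a_i$-value, so balance reduces to the generic equality of in- and out-degrees at interior faces of a wiring diagram; if $F$ lies along the horizontal segments of $\ell_i$ or $\ell_{i+1}$, or along the vertical segment of $\ell_i$, the arrows crossing the corresponding piece of the boundary pair up (using the complementary behavior of types (a), (b), (c) above and below, or to the left and right of that pseudoline segment) to keep the counts equal at $v_F$.

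The hard part is the region near the crossing of $\ell_i$ and $\ell_{i+1}$, i.e.\ the ``northeast corner'' of $R_i$: here the types (a)--(c) alone fail balance at the mutable faces touching the crossing, and one must exploit the type (d) arrows to and from $v_{d_j}$. The arrows in Algorithm~2(2)(d) are designed precisely to compensate for this: the incoming and outgoing arrows at $v_{d_j}$ attach to exactly the two mutable faces adjacent to the crossing that would otherwise be off by one in their $\omega_i$-count. The plan is to check the corner case by direct inspection of the four faces around the crossing of $\ell_i$ and $\ell_{i+1}$ and of the neighboring faces immediately below and to the right, confirming that adding the type (d) contributions restores balance at each mutable vertex in that neighborhood.

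Combining these pieces for every $i = d_j$ yields the $\omega_i$-balance, and hence the balance of $\lambda$ itself at every mutable $v_F$, proving the corollary. The main obstacle I foresee is the corner case: the other cases amount to standard pseudoline bookkeeping, but the corner analysis requires the exact combinatorics of Algorithm~2(2)(d) to be matched against the faces surrounding the $\ell_i$--$\ell_{i+1}$ crossing, and this is where the construction must be checked in detail.
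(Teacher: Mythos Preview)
Your plan is correct and takes essentially the same approach as the paper. The paper's proof likewise does a local case analysis (via Figure~\ref{fig:weights}): generic faces are balanced because all neighbors lie in the same weight region, and exactly two special faces adjacent to the crossing of $\ell_{d_j}$ and $\ell_{d_j+1}$ (denoted $\bigstar$ and $\blacklozenge$) become balanced only after the type~(d) arrows to the extra frozen vertex are counted; your coordinate-by-coordinate decomposition is just an alternative organization of this same computation.
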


\begin{proof}
Consider Figure~\ref{fig:weights}. The top left shaded regions are of weight $\omega_{d_{i-1}}$, the bottom shaded region is of weight $\omega_{d_{i+1}}$ and all visible regions are of weight $\omega_{d_{i}}$.
Form the picture one can see that all regions except the $\bigstar$ and $\blacklozenge$ yield balanced vertices in $Q_{d_1,\dots,d_k;n}$.
For the vertex $\blacklozenge$ the weights of incoming arrows from faces of $\mathcal P_{d_1,\dots,d_k;n}$ sum to $2\omega_{d_i}+\omega_{d_{i+1}}$ and outgoing such arrows sum to $2\omega_{d_i}+\omega_{d_{i-1}}+\omega_{d_{i+1}}$; this imbalance is fixed by the incoming arrow to $v_{d_{i-1}}$ which is of weight $\omega_{d_{i-1}}$.
For the vertex $\bigstar$ the incoming arrows from faces of $\mathcal P_{d_1,\dots,d_k;n}$ contribute $3\omega_{d_i}+2\omega_{d_{i-1}}$ and outgoing such arrows $3\omega_{d_i}+\omega_{d_{i-1}}$;
this imbalance is fixed by the outgoing arrow to $v_{d_{i-1}}$.
\end{proof}

\begin{proposition}
The seed $(Q_{d_1,\dots,d_k;n},{\bf x}_{d_1,\dots,d_k;n})$ is an initial seed defining Geiss--Leclerc--Schr\"oer's cluster structure on $\kk[\flag_{d_1,\dots,d_k;n}]$. \end{proposition}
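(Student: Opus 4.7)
The plan is to match the triple $(Q_{d_1,\dots,d_k;n},{\bf x}_{d_1,\dots,d_k;n})$ with one of the initial seeds constructed by Geiss--Leclerc--Schr\"oer in \cite{GLS_partial-flags} attached to a reduced expression of the minimal length coset representative of $w_0$ in $S_n/W_{P_{d_1,\dots,d_k;n}}$, and then to invoke the compatibility of their cluster structure on $\kk[U_{d_1,\dots,d_k;n}]$ with the one on $\kk[\flag_{d_1,\dots,d_k;n}]$ via the lifts $f\mapsto \hat f$.

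First, I would observe that the permutation $\sigma$ encoded by the one-line notation immediately before Algorithm~1 is precisely the minimal length coset representative of $w_0$ in $S_n/\langle s_i:i\notin\{d_1,\dots,d_k\}\rangle$. The pseudoline arrangement $\mathcal P_{d_1,\dots,d_k;n}$ is then a wiring diagram for $\sigma$, and reading its crossings top-to-bottom (or equivalently left-to-right) yields a reduced word $\mathbf i(\sigma)=(i_1,\dots,i_N)$ whose Coxeter letters are exactly the indices $d_1,\dots,d_k$ (these are the only positions where adjacent wires swap). Consequently the chamber minors $D_{I_F}$ attached by \eqref{def:IF} to the bounded faces $F$ coincide with the chamber minors associated to $\mathbf i(\sigma)$ in the Berenstein--Fomin--Zelevinsky construction \cite{BFZ96,BFZ05}, and the unbounded faces along the $y$-axis produce the frozen chamber minors $D_{[d_{k+1}-j,n]\cap\{i:\sigma(i)>j\}}$ in the standard way.

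Second, I would verify that Algorithm~2 reproduces the BFZ--GLS quiver attached to $\mathbf i(\sigma)$: the three types of arrows (a)--(c) are the usual ``east'', ``south'' and ``north-west'' arrows between chamber minors of adjacent regions in a wiring diagram, and the arrangement has no other adjacencies. The extra frozen vertices $v_{d_1},\dots,v_{d_k}$ together with the arrows in (d) are precisely the additional frozen variables $P_{[d_i]}$ of weight $\omega_{d_i}$ that GLS add to go from the cluster structure on $\kk[U_{d_1,\dots,d_k;n}]$ to the multi-homogeneous cluster structure on $\kk[\flag_{d_1,\dots,d_k;n}]$, see \cite[\S10.4]{GLS_partial-flags}; the local pattern of arrows incident to each $v_{d_i}$ described in (d) is exactly what is needed to absorb the weight discrepancy between $\widehat{D}_{I_F}$ and its unlifted counterpart $D_{I_F}\in\kk[U_{d_1,\dots,d_k;n}]$.

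Third, I would identify the cluster variables. By \eqref{eq:kUd in kU} and the uniqueness characterization of the lift $\hat f$, applying $\widehat{(\cdot)}$ to the BFZ--GLS cluster of minors on $\kk[U_{d_1,\dots,d_k;n}]$ yields exactly the tuple $\{\widehat{D}_{I_F}:F\text{ bounded or frozen along the }y\text{-axis}\}\cup\{P_{[d_i]}:i\in[k]\}$, which is ${\bf x}_{d_1,\dots,d_k;n}$ by definition. The key remaining point is that the quiver mutations are compatible with the grading: this is precisely ensured by Corollary~\ref{cor:generically balanced}, since balancedness at every mutable vertex guarantees that the short exchange relations at those vertices are homogeneous, and therefore their lifts yield well-defined mutations in $\kk[\flag_{d_1,\dots,d_k;n}]$.

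The main obstacle is bookkeeping: one has to check carefully that the combinatorial recipes for the arrows in Algorithm~2, especially for type (d), match the prescription in \cite[\S10]{GLS_partial-flags} once the latter is translated from reduced-word language into the wiring-diagram language used here. Everything else (reducedness of $\mathbf i(\sigma)$, identification of chamber minors, the fact that the $\widehat{D}_{I_F}$ and $P_{[d_i]}$ generate the cluster) is a direct unpacking of definitions, and the homogeneity of the mutation relations is handed to us by the previous corollary.
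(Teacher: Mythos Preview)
Your approach is essentially the paper's: identify the subquiver on the face-vertices with the GLS seed for $\kk[U_{d_1,\dots,d_k;n}]$ attached to the reduced word read off $\mathcal P_{d_1,\dots,d_k;n}$, then check that the extra frozen vertices $v_{d_i}$ and their incident arrows agree with GLS's extension recipe in \cite[\S10]{GLS_partial-flags}. One factual slip: the Coxeter letters of $\mathbf i(\sigma)$ are \emph{not} restricted to $\{d_1,\dots,d_k\}$---the letter at a crossing is the height (row) at which it occurs, not the labels of the wires involved, and all of $1,\dots,n-1$ appear. More substantively, you treat Corollary~\ref{cor:generically balanced} and the type-(d) ``bookkeeping'' as two separate issues, but in the paper they coincide: GLS's rule for the extra arrows (their Equations~(10.1)--(10.2)) is precisely ``add arrows to cure the weight imbalance at each mutable vertex of $Q'$'', so matching Algorithm~2(d) with GLS amounts to checking that those arrows balance the quiver. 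The proof of Corollary~\ref{cor:generically balanced} already isolates the only two unbalanced vertex types in $Q'$ (the $\blacklozenge$ and $\bigstar$ of Figure~\ref{fig:weights}) and computes the exact imbalance; the paper then simply reads off that Algorithm~2(d) supplies exactly the arrows GLS's rule prescribes. In other words, the corollary is not an independent homogeneity check layered on top of the bookkeeping---it \emph{is} the bookkeeping you flag as the main obstacle.
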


\begin{proof}
By construction the pseudoline arrangement $\mathcal P_{d_1,\dots,d_k;n}$ corresponds to a reduced expression of $\sigma\in S_n$.
%with \textcolor{red}{$ww_0^K=w_0$}.
Let $Q'$ be the full subquiver of $Q_{d_1,\dots,d_k;n}$ on all vertices except $v_{d_1},\dots,v_{d_k}$ and let ${\bf x}'$ be the set of minors $D_{I_F}$ corresponding to vertices in $Q'$.
Then by \cite[Proposition 9.4 and \S10]{GLS_partial-flags} $(Q',{\bf x}')$ is an initial seed for $\kk[U_{d_1,\dots,d_k;n}]$.
An initial seed for $\kk[\flag_{d_1,\dots,d_k;n}]$ can be obtained from $(Q',{\bf x}')$ as follows:
\begin{enumerate}
    \item extend $Q'$ by add $k$ frozen vertices and arrows following \cite[Theorem 10.2(iii)]{GLS_partial-flags};
    \item lift all elements of ${\bf x}'$ and add the $k$ Pl\"ucker coordinates $p_{[d_1]},\dots,p_{[d_k]}$ for the new frozen vertices.
\end{enumerate}
By definition the cluster ${\bf x}_{d_1,\dots,d_k;n}$ has the desired form and we only need to verify that the arrows adjacent to $v_{d_1},\dots,v_{d_k}$ agree with the ones in the reference. 
The number of arrows from $v_{d_j}$ to a vertex $v_F$ (eventually negative if there are only arrows from $v_F$ to $v_{d_j}$) is obtained by the unique lift of $D_{I_F}\mu_F(D_{I_F})=M_F+N_F$.
According to \cite[Lemma 2.5, Equations (10.1)\&(10.2)]{GLS_partial-flags}
\[
\widehat{D_{I_F}\mu_F(D_{I_F})}=\widehat{D}_{I_F} \widehat{\mu_F(D_{I_F})} = m_F\widehat{M}_F + n_F\widehat{ N}_F,
\]
where $m_F,n_F$ are corpime monomials in $P_{[d_1]},\dots,P_{[d_k]}$. By definition $m_F\widehat{M}_F + n_F\widehat{N}_F$ is homogeneous and of weight $\lambda(D_{I_F})+\lambda(\mu_F(D_{I_F}))$.
This uniquely determines $m_F$ and $n_F$, which in turn define the missing arrows as follows: 
\begin{enumerate}
    \item identify a non-balanced mutable vertex $v$ of $Q'$;
    \item compute the weights of all cluster variables corresponding to vertices adjacent to (and including) $v$;
    \item for $\sum_{v_F\to v} \lambda(D_F)=\sum_{i=1}^ka_i\omega_{d_i}$ and 
    $ \sum_{i=1}^k b_i\omega_{d_i}=\sum_{v\to v_{F'}} \lambda(D_{F'})$ add $\max\{0,b_i-a_i\}$-many arrows $v\to v_{d_i}$ and $\max\{0,a_i-b_i\}$-many arrows $v_{d_i}\to v$.
\end{enumerate}
Then $m_F=\prod_{i=1}^k P_{[d_i]}^{\max\{0,a_i-b_i\}}$ and  $n_F=\prod_{i=1}^k P_{[d_i]}^{\max\{0,b_i-a_i\}}$, see \cite[Equation (10.2)]{GLS_partial-flags}.
By Corollary~\ref{cor:generically balanced} we only have to consider vertices of type $\blacklozenge$ and $\bigstar$ in Figure~\ref{fig:weights}.
In the proof of Corollary~\ref{cor:generically balanced} we have computed the weights of minors for adjacent vertices.
For $v=\blacklozenge$ in Step 3 we add one incoming arrow $v_{d_{i-1}}\to \blacklozenge$; for $v=\bigstar$ we add one outgoing arrow $\bigstar\to v_{d_{i-1}}$.
The resulting quiver coincides with $Q_{d_1,\dots,d_k;n}$.
\end{proof}

\section{Cluster variables as tableaux}\label{sec:tableaux}
In this section we describe the initial cluster variables in more detail. We start by recalling some combinatorial notions. A \emph{Young diagram} is an arrangement of boxes that are north-west bound, that is the number of boxes in each row, respectively column, is weakly decreasing from top to bottom, respectively left to right.
A \emph{semistandard Young tableau} is a filling of a Young diagram with positive integers that are weakly increasing in rows and strictly increasing in columns. 
As we are mainly interested in Young tableaux (as apposed to Young diagrams) we refrain from drawing the boxes, but rather just depict the filling, see Example~\ref{exp:tableaux union}. 

\subsection{Tableaux}
For $k, m \in \ZZ_{\ge 1}$, denote by ${\rm SSYT}_{\le k; m}$ the set of all semistandard Young tableaux (including the empty tableau denoted by $\mathds{1}$) with less or equal to $k$ rows and with entries in $[m]$. 
To simplify language we frequently refer to semistandard Young tableaux simply as \emph{tableaux}.
For a tableau $T \in {\rm SSYT}_{\le k; m}$ with $k'\le k$ many rows,
we set the $i$th row to be empty for $i>k'$. 
%say the $i$th row of $T$ and $i'>k'$, then the $i$th row is empty. 

For $T,T' \in {\rm SSYT}_{\le k; m}$, we denote by $T \cup T'$ the row-increasing tableau whose $i$th row is the union of the $i$th rows of $T$ and $T'$ as multisets. 
The resulting tableau is semistandard \cite[Lemma 3.2]{Li20}.
%By Lemma 3.2 in \cite{Li20}, the tableau $T \cup T'$ is still semistandard. 
Therefore ${\rm SSYT}_{\le k; m}$ has the structure of a monoid with multiplication given by ``$\cup$''. 

\begin{example}\label{exp:tableaux union}
In ${\rm SSYT}_{\le 5;6}$, we have that 
\begin{align*}
\begin{matrix}
1 \\ 4 \\ 6
\end{matrix} \cup \begin{matrix} 2 \\ 3\end{matrix} = \begin{matrix} 1 & 2 \\ 3 & 4 \\ 6\end{matrix}.
\end{align*} 
\end{example}

For $S, T \in {\rm SSYT}_{\le k; m}$, we say that $S$ is a \emph{factor} of $T$, denoted $S \subset T$, if for every $i \in [k]$, the $i$th row of $S$ is contained in the $i$th row of $T$ as multisets. 
For a factor $S$ of $T$, we define $\frac{T}{S}=S^{-1}T=TS^{-1}$ as the row-increasing tableau whose elements in the $i$th row are the elements in the multiset-difference of $i$th row of $T$ and the $i$th row of $S$ for every $i \in [k]$.

We call a tableau $T \in {\rm SSYT}_{\le k;m}$ \emph{trivial} if it is a one-column tableau with entries $\{1,\ldots, p\}$ for some $p \in [k]$. 
For any $T \in {\rm SSYT}_{\le k;m}$, we denote by $T_{\text{red}} \subset T$ the \emph{reduced} semistandard tableau obtained by from $T$ by removing a maximal trivial factor. 

For $S, T \in {\rm SSYT}_{\le k; m}$ we define an equivalence relation 
\[
S\sim T \text{ if and only if } S_{\text{red}} = T_{\text{red}}.
\]
Note that if $T \sim T'$, then $T$ and $T'$ have the same number of rows. 
We denote by ${\rm SSYT}_{\le k; m,\sim}$ the set of $\sim$-equivalence classes in ${\rm SSYT}_{\le k; m}$. 
With a slight abuse of notation, we write $T \in {\rm SSYT}_{\le k; m,\sim}$ instead of $[T] \in {\rm SSYT}_{\le k; m,\sim}$.

Recall that we denote by $U \subset SL_n$ the subgroup of unipotent upper triangular matrices. 
The dual canonical basis of $\kk[U]$ is parametrized by the set ${\rm SSYT}_{\le n-1; n, \sim}$ of equivalence classes of semistandard Young tableaux \cite[Theorem 4.8]{Li20}. 
For every $T\in {\rm SSYT}_{\le n-1; n, \sim}$ denote by $\ch(T)$ the corresponding element in the dual canonical basis of $\kk[U]$. 
The dual canonical basis for $\kk[\flag_{1,\dots,n-1;n}]=\kk[U^-\backslash SL_n]$ is indexed by ${\rm SSYT}_{\le n-1;n}$ and it is obtained from the dual canonical basis of $\kk[U]$ by lifting $\ch(T)\in \kk[U]$ along $\iota^*:\kk[\flag_{1,\dots,n-1;n}]\to \kk[U]$ 
(as in \cite[Proof of Lemma 2.4]{GLS_partial-flags} discussed in \S\ref{sec:U}).
Every homogeneous element in the fibre $(\iota^*)^{-1}(\ch(T))$ obtained from the unique lift $\widehat{\ch}(T)$ by multiplying with a monomial in $P_{[i]}$, $1\le i\le n-1$ is a dual canonical basis element for $\kk[\flag_{1,\dots,n-1;n}]$.
The indexing set in this case is ${\rm SSYT}_{\le n-1;n}$ as follows from the discussion in \S\ref{subsec:from tableaux to dual canonical basis eleemnts in kU}.
By \eqref{eq:kUd in kU} $\kk[U_{d_1,\dots,d_k;n}]$ is a subalgebra in $\kk[U]$ so we refrain from treating this case separately, unless $k=1$.

We denote by ${\rm SSYT}_{k;n}$ (resp. ${\rm SSYT}_{k;n, \sim}$) the subset of ${\rm SSYT}_{\le k;n}$ (resp. ${\rm SSYT}_{\le k; n, \sim}$) consisting of all semistandard Young tableaux of rectangular shapes with $k$ rows. 
Denote by $\kk[\Gr_{k;n,\sim}]$ the quotient of $\kk[\Gr_{k;n}]$ by the inhomogeneous ideal $\langle P_{[i,i+k-1]}-1: 1 \le i \le n-k+1 \rangle$. The dual canonical basis of $\kk[\Gr_{k;n,\sim}]$ is parametrized by the set ${\rm SSYT}_{k; n, \sim}$ of equivalence classes of semistandard Young tableaux, \cite[Theorem 3.25]{CDFL}. The dual canonical basis of $\kk[\Gr_{k;n}]$ is parametrized by the set ${\rm SSYT}_{k;n}$ of semistandard Young tableaux, \cite[Theorem 3.1]{DGL}. 

\subsection{Partial order on tableaux}\label{sec:partial order} 
There is a partial order on the set of semistandard Young tableaux that is  induced from a partial order on partitions as follows \cite[Section 5.5]{Brini05}.

Let $\lambda = (\lambda_1,\dots,\lambda_\ell)$, $\mu = (\mu_1,\dots,\mu_\ell)$, with $\lambda_1 \geq \cdots \geq \lambda_\ell \geq 0$, $\mu_1 \ge \cdots \ge \mu_{\ell} \ge 0$, be partitions. 
Then the \emph{dominance order on partitions} is defined as
\[
\lambda \le \mu\quad  \text{  if and only if } \quad  \sum_{j \leq i}\lambda_j \le \sum_{j \leq i}\mu_j \text{ for all } 1\le i\le \ell.
\]
For $T\in {\rm SSYT}_{\le k; m}$ and $i \in [m]$, denote by $T[i]$ the sub-tableau obtained from $T$ by restriction to the entries in $[i]$. 
For a tableau $T$, let ${\rm sh}(T)$ denote the \emph{shape of $T$}, that is a partition $(\lambda_1,\dots,\lambda_\ell)$ where $\ell\le k$ is the number of rows of $T$ and $\lambda_i$ is the number of entries in the $i^{\text{th}}$ row. 
If $T,T' \in {\rm SSYT}_{\le k; m}$ are of the same shape, Then the \emph{dominance order on tableaux} is defined as 
\[
T \le T' \quad \text{if and only if} \quad  {\rm sh}(T[i]) \le {\rm sh}({T'}[i]) \quad \text{for all} \quad 1\le i\le m.
\]

\subsection{\texorpdfstring{From tableaux to dual canonical basis elements in $\kk[U]$}{From tableaux to dual canonical basis elements in kk[U]}} \label{subsec:from tableaux to dual canonical basis eleemnts in kU}

We recall how to obtain a dual canonical basis element $\widetilde{\ch}(T) \in \kk[U]$ from a tableau $T \in {\rm SSYT}_{\le n-1; n, \sim}$, see \cite[\S5]{Li20}. 
There is a natural bijection between one-column tableaux $T$ and Plücker coordinates with index set given by the filling of $T$.
Monomials in Plücker coordinates whose index sets form columns of a semistandard Young tableaux (up to reordering) are called \emph{standard monomials}.
For a tableau $T \in {\rm SSYT}_{\le k; n}$ we denote by $P_{T} = P_{T_1} \cdots P_{T_m}$ the standard monomial of $T$, where $T_1, \ldots, T_m$ are columns of $T$.
We define a \emph{fundamental tableau} as a one-column tableau $T_{(a,b)}$ with entries $\{1,2,\ldots, a-1, b\}$ for some $a \in [n-1]$ and $b \in [p+1, n]$.

For the rest of this subsection let us fix a tableau $T \in {\rm SSYT}_{\le n-1; n, \sim}$ and construct its associated element of the dual canonical basis.
First take the unique tableau $T'$ that satisfies
\begin{enumerate}
    \item $T\sim T'$,
    \item columns of $T'$ are fundamental tableaux of form $T_{(a_i, b_i)}$ with $1\le i\le m$, where $m$ is number of columns of $T'$.
\end{enumerate}
By definition, $a_1, \ldots, a_m \in [n-1]$ and $b_1, \ldots, b_m \in [n]$.  
We denote ${\bf p}_T := \{ (a_i, b_i): i \in [m]\}$ as a multi-set. 
Let ${\bf i}_T =(i_1, \ldots, i_m)$, respectively ${\bf j}_T = (j_1, \ldots, j_m)$, be such that 
\begin{eqnarray*}
i_1 \leq \dots \leq i_m \quad &\text{and}&  \quad \{ i_1,\dots,i_m\}=\{a_1, \ldots, a_m\},\quad  \text{respectively}\\
j_1 \leq \dots \leq j_m \quad &\text{and}& \quad \{j_1,\dots,j_m\}=\{b_1, \ldots, b_m\}.
\end{eqnarray*}
For ${\bf c}=(c_1, \ldots, c_m), {\bf d} = (d_1, \ldots, d_m) \in \ZZ^m$ we write ${\bf p}_{{\bf c}, {\bf d}} := \{ (c_i, d_i): i \in [m] \}$ as a multi-set. 
Let $S_m$ be the symmetric group on $[m]$ and $\ell(w)$ the length of $w \in S_m$; denote by $w_0 \in S_m$ be the longest permutation. 
For any $T \in {\rm SSYT}_{\le n-1; n, \sim}$, there exists $w \in S_m$ such that ${\bf p}_{T} = {\bf p}_{w \cdot {\bf i}_T, {\bf j}_T}$, and let $w_T \in S_m$ be the unique permutation with maximal length such that ${\bf p}_{T} = {\bf p}_{w_T \cdot {\bf i}_T, {\bf j}_T}$, see \cite[Proposition 2.7]{Billey_etal_18}, \cite[\S2.4-5]{Bourbaki} and \cite[Proposition 2.3]{Kob11}.
 
Let $T,T' \in {\rm SSYT}_{\le n-1;n,\sim}$ be as above.
For $u \in S_m$ we define a standard monomial $P_{u;T} \in \kk[U]$ as follows. 
If $j_a \in [i_{u(a)}, i_{u(a)}+n]$ for all $a \in [m]$, define $\alpha(u;T)$ as the tableau with columns $T_{(i_{u(a)}, j_a)}$, $a \in [m]$, and define $P_{u; T} := P_{\alpha(u;T)} \in \kk[U]$ as its standard monomial. 
If $j_a \not\in [i_{u(a)}, i_{u(a)}+n]$ for some $a \in [m]$, then $\alpha(u;T)$ is undefined and $P_{u;T} := 0$. 

Let $T,T' \in {\rm SSYT}_{\le n-1; n,\sim}$ be as above. Then by \cite[Theorem 5.3]{Li20}
\begin{align}  \label{eq:formula of chT}
\widetilde{\ch}(T) = \sum_{u \in S_m} (-1)^{\ell(uw_T)} p_{uw_0, w_Tw_0}(1) P_{u; T'} \in \kk[U],
\end{align}
where $p_{u,v}(q)$ is a Kazhdan-Lusztig polynomial \cite{KL79}. 
The expression (\ref{eq:formula of chT}) is of the form $\widetilde{\ch}(T) = \sum_{S} c_S P_S$, where $S$'s are semistandard tableaux. 
The largest tableau in the expression is $T'$ and $c_{T'} = 1$. 
After removing the trivial tableaux in $T'$, we obtain the tableau $T$ back. 
Denote by $T''$ the unique trivial tableau such that $T' = T \cup T''$. The expression (\ref{eq:formula of chT}) can be simplified to 
\begin{align} \label{eq:formula of chT for kU after simplification}
\ch(T) = \frac{1}{P_{T''}} \widetilde{\ch}(T).    
\end{align}
Since $P_{T''}$ is equal to $1$ on $\kk[U]$, $\ch(T)$ and $\widetilde{\ch}(T)$ take the same value on $\kk[U]$.  
The dual canonical basis for $\kk[\flag_{1,\dots,n-1;n}]=\kk[U^-\backslash SL_n]$ is obtained from the dual canonical basis of $\kk[U]$ via the unique lifting (see Theorem 1.1 in \cite{Kad23} and \S\ref{sec:U}) and multiplying lifts with monomials in $P_{[d]}$ where $1\le d\le n-1$.
Notice that $P_{T''}$ above is such a monomial.

\subsection{\texorpdfstring{From tableaux to dual canonical basis elements in $\kk[\Gr_{k;n}]$}{From tableaux to dual canonical basis elements in kk[Gr\_k;n]}} \label{subsec:from tableaux to dual canonical basis eleemnts in CGrkn}
The map from tableaux to dual canonical basis elements in $\kk[\Gr_{k;n}]$ \cite[\S5]{CDFL} is similar to the case of $\kk[U]$, but they differ in the notion of fundamental tableaux. 
A \emph{fundamental tableau} in $\SSYT_{k;n}$ is a one-column tableau whose entries are of the form $\{i,i+1, \ldots, \widehat{j}, \ldots, i+k\}$, where $1\le i\le n-k$, $i < j < i+k$, and $\widehat{j}$ means the number $j$ is missing. 

We recall the map from tableaux to dual canonical basis elements in $\kk[\Gr_{k;n}]$. 
Let $T \in \SSYT_{k;n}$. 
Denote by $T'$ the unique tableau whose columns are  fundamental tableaux in $\SSYT_{k;n}$ and $T\sim T'$.
Let $m$ denote the number of columns of $T'$. 
Let ${\bf i} = i_1 \leq i_2 \dots \leq i_m$ be the entries in the first row of $T'$, and let $r_1,\dots,r_m$ be the elements such that the $a$th column of $T'$ has content $[i_a,i_a+k] \setminus \{r_a\}$. Let ${\bf j} = j_1 \leq j_2 \leq \dots \leq j_m$ be the elements $r_1,\dots,r_m$ written in weakly increasing order. 

For $u \in S_m$, define $P_{u;T'} \in \kk[\Gr_{k;n}]$ as follows. Provided $j_a \in [i_{u(a)}, i_{u(a)}+k]$ for all $a \in [m]$, define $\alpha(u;T')$ as the tableau whose columns have entries $[i_{u(a)}, i_{u(a)}+k] \setminus \{j_a\}$ for $a \in [m]$. 
Then $P_{u; T'} := P_{\alpha(u;T')} \in \kk[\Gr_{k;n}]$ is the corresponding standard monomial. 
If $j_a \notin [i_{u_a}, i_{u(a)}+k]$ for some $a$, then the tableau $\alpha(u;T')$ is undefined and $P_{u ;T'} := 0$. 

There is a unique $u \in S_m$ of maximal length with the property that the sets $\{[i_{u(a)},i_{u(a)}+k] \setminus \{j_a \} \}_{a \in [m]}$ describe the columns of $T'$. This $u$ is denoted by $u = w_{T}$.
By \cite[Theorem 5.8]{CDFL}, the element $\tilde{\ch}(T)$ in the dual canonical basis of $\kk[\Gr_{k;n,\sim}]$ is given by 
\begin{align}\label{eq:formula of tilde ch(T) Grassmannian}
\tilde{\ch}(T) = \sum_{u \in S_m} (-1)^{\ell(uw_T)} p_{uw_0, w_Tw_0}(1) P_{u; T'} \in \kk[\Gr_{k;n,\sim}],
\end{align}
where $p_{u,v}(q)$ is a Kazhdan-Lusztig polynomial \cite{KL79}. 
The formula (\ref{eq:formula of tilde ch(T) Grassmannian}) is a change of basis from the standard monomial basis to the dual canonical basis. 
Define 
\begin{align}
\ch(T) = \frac{1}{P_{T''}} \tilde{\ch}(T)    \label{eq:formula of ch(T) Grassmannian}
\end{align}
where $T'' = T' T^{-1}$. The set $\{\ch(T): T \in \SSYT_{k;n}\}$ is the dual canonical basis of $\kk[\Gr_{k;n}]$, see \cite[Theorem 3.1]{DGL}. 

\begin{remark}
The concept of fundamental tableaux in this subsection is different from the concept of fundamental tableaux in \S\ref{subsec:from tableaux to dual canonical basis eleemnts in kU}. We expect that for a rectangular tableau $T \in \SSYT_{k;n}$, after simplification, $\ch(T)$ in (\ref{eq:formula of ch(T) Grassmannian}) is equal to $\ch(T)$ in (\ref{eq:formula of chT for kU after simplification}), \emph{c.f.} Question~\ref{question}.
\end{remark}

\subsection{Mutation of cluster variables in terms of tableaux}

Mutations of cluster variables in the cluster algebra $\kk[U]$ can be described in terms of tableaux \cite[\S6]{Li20}. 
Starting from an initial seed of $\kk[U]$, each time we perform a mutation at a cluster variable $\ch(T_r)$, we obtain a new cluster variable $\ch(T'_r)$ determined by
\begin{align}\label{eq:char mut}
\ch(T'_r)\ch(T_r) = \prod_{i \to r \text{ in }Q} \ch(T_i) + \prod_{r \to j \text{ in }Q} \ch(T_j),
\end{align}
where $\ch(T_i)$ is the cluster variable at the vertex $i$. The two tableaux $\cup_{i \to r} T_i$, $\cup_{r \to j} T_j$ are comparable in the dominance order $\le$ and $T'_r$ is determined by
\begin{align}\label{eq:mutation tableaux}
T'_r = T^{-1}_r \max_{\le}\{\cup_{i \to r} T_i, \cup_{r \to j} T_j \}. 
\end{align}
As $\kk[U_{d_1,\dots,d_k;n}]$ is a subalgebra of $\kk[U]$, if all elements in \eqref{eq:char mut} belong to $\kk[U_{d_1,\dots,d_k;n}]$ then the equation is an exchange relation in $\kk[U_{d_1,\dots,d_k;n}]$.

\subsection{Tableaux for initial minors}\label{sec:initial tableaux}

First we describe the initial cluster variables for the cluster algebra $\kk[\flag_{d_1,\dots,d_k;n}]$ explicitly in terms of Plücker coordinates. 
This will allow us to easily compute their image with respect to the algebra embedding \eqref{eq:embed} and also express them in terms of tableaux.
To simplify notation we denote the lift of a minor $D_I$ from now on by $\Delta_I:=\widehat{D}_I$. 
All computations take place in $\kk[\flag_{d_1,\dots,d_k;n}]$.

\begin{lemma}\label{lem:initial minors}
The index sets $I_F$ associated with faces $F$ of $\mathcal P_{d_1,\dots,d_k;n}$ are

\noindent
{\bf frozens} (faces unbounded to the left): $\{d_1\},[d_1-1,d_1],\dots,[d_1],[d_1]\cup\{d_2\}, [d_1]\cup[d_2-1,d_2],\dots[d_2], [d_2]\cup \{d_3\},\dots [d_k],[d_k]\cup\{n\},\dots,[n]\setminus{d_k+1}$.

\noindent
{\bf mutable} (bounded faces):
\begin{enumerate}
        \item for all $i>1$ we have $d_i,[d_i-1,d_i],\dots,[2,d_i]$ (notice that $[d_i]$ appears as frozen);
        \item for all $1\le i<j\le k+1$ we get 
        \[
        \begin{smallmatrix}
            [d_{i-1}+1,d_{j-1}]\cup \{d_j\}, & [d_{i-1}+2,d_{j-1}]\cup \{d_j\},& \dots & [d_i-1,d_{j-1}]\cup \{d_j\} &[d_i,d_{j-1}]\cup \{d_j\}, \\
             [d_{i-1}+1,d_{j-1}]\cup [d_j-1,d_j],& [d_{i-1}+2,d_{j-1}]\cup [d_j-1,d_j] & \dots & [d_i-1,d_{j-1}]\cup [d_j-1,d_j]  &[d_i,d_{j-1}]\cup [d_j-1,d_j] ,\\
             \vdots &&& \\
            [d_{i-1}+1,d_{j-1}]\cup[d_{j-1}+2,d_j],  & [d_{i-1}+2,d_{j-1}]\cup[d_{j-1}+2,d_j],& \dots & [d_{i}-1,d_{j-1}]\cup[d_{j-1}+2,d_j],& [d_{i},d_{j-1}]\cup[d_{j-1}+2,d_j].
        \end{smallmatrix}
        \]
\end{enumerate}
\end{lemma}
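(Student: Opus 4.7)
The plan is to prove the lemma by directly enumerating the faces of $\mathcal{P}_{d_1,\dots,d_k;n}$ and computing $I_F$ for each from the definition \eqref{def:IF}. The key structural observation is that the permutation $\sigma$ decomposes $[n]$ into $k+1$ consecutive intervals of positions --- of lengths $n-d_k,\ d_k-d_{k-1},\ \dots,\ d_2-d_1,\ d_1$ from left to right --- each mapped monotonically onto one of the $k+1$ consecutive value-intervals $[d_{j-1}+1,d_j]$ (with $d_0=0$, $d_{k+1}=n$) in block-reversed order. As a consequence, the pseudolines inside a single block are ``parallel'' (their vertical and horizontal segments do not cross), while any two lines from distinct blocks cross each other exactly once, so the corners $(i,\sigma(i))$ form $k+1$ ascending diagonal staircases stacked vertically in the first quadrant.

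For the frozen faces --- those unbounded to the left, i.e.\ touching the $y$-axis --- I proceed slab by slab from the top down. In the slab $y\in(h,h+1)$ the leftmost face $F$ has $y$-extent within $(h,h+1)$ (possibly extended across neighbouring slabs when the bounding horizontal ends before the next vertical, producing a ``boot-shape''), and its right $x$-extent is determined by the first vertical whose $y$-range covers the slab. The defining criterion $i\in I_F\iff F\subset(0,i)\times(0,\sigma(i))$ says that $i\in I_F$ exactly when the top of $F$ is at most $\sigma(i)$ and the rightmost extent of $F$ is at most $i$. Applying this to each frozen face in turn, the block structure of $\sigma$ reproduces the lemma's nested listing, with the $j$-th group of the list corresponding to those slabs whose bounding horizontals belong to the $j$-th value-block $[d_{j-1}+1,d_j]$.

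For the mutable (bounded) faces the same L-region criterion applies. A rectangular mutable face lies between two consecutive horizontals of some value-block $j$ and two consecutive verticals drawn from the position-block associated with some $i<j$; computing $I_F$ yields a set of the form $[d_{i-1}+s,d_{j-1}]\cup T$ with $s$ a local horizontal coordinate and $T$ a contiguous subset of $[d_{j-1}+1,d_j]$ --- this is exactly the second family described in the lemma. The remaining mutables $\{d_i\},[d_i-1,d_i],\dots,[2,d_i]$ for $i>1$ come from the faces sitting along the top edge of the $i$-th staircase, strictly interior to the arrangement.

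The main obstacle throughout is bookkeeping: one has to keep track of the (occasionally non-rectangular) shape of each face at block boundaries, where a horizontal segment can terminate before reaching the next vertical, merging two nominally adjacent regions into a single face --- it is exactly this phenomenon that forces the smallest set $\{d_j\}$ to appear at the top of each group, before the larger rectangular faces contribute the remaining sets in the group. Once this bookkeeping is set up, the computation of each $I_F$ from the explicit block structure of $\sigma$ is a routine case-by-case verification.
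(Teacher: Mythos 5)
The paper states this lemma without proof, treating it as a direct read-off from Algorithm 1 and the definition of $I_F$ in \eqref{def:IF}. Your sketch correctly identifies all the structural facts that such a computation requires --- the block decomposition of $\sigma$, the fact that pseudolines within a block are nested while lines from distinct blocks cross exactly once, the bounding-box criterion $i\in I_F\Leftrightarrow F\subset(0,i)\times(0,\sigma(i))$, and the boot-shaped faces arising where a horizontal terminates before the next vertical --- and the case-by-case enumeration you leave implicit is likewise elided in the paper, so the proposal matches the source's approach and level of detail.
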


In particular, the index sets of initial minors take the form of one or two intervals:
\begin{itemize}
    \item[(a)] $[i_j,d_j]$ for $1\le j\le k$ and $1\le i_j\le d_j$, or
    \item[(b)] $[i_j,d_j]\cup[i_{j+1},d_{j+1}]$ for $1\le i_j\le d_j< i_{j+1}\le d_{j+1}\le n$ and $0\le j<k$
\end{itemize}
(by convention $d_{k+1}=n$ and $d_0=0$). 
In case (a) observe that for any columnset $J$ of appropriate size satisfying $J\cap [1,i_{j}-1]=\varnothing$ we have
\begin{eqnarray} 
\Delta_{[i_j,d_j]}=\Delta_{[i_j,d_j],J}=\Delta_{[1,d_j],[1,i_j-1]\cup J}=P_{[1,i_j-1]\cup J}.    
\end{eqnarray}
This yields an expression of all cluster variables with index set of form (a) in terms of Plücker coordinates:
\begin{eqnarray}\label{eq:Plücker minor}
    \Delta_{[i_j,d_j]} = P_{[1,i_j-1]\cup [n-d_j+i_j,n]}
\end{eqnarray}
For index sets of type (b) we have the following Proposition:

\begin{proposition} \label{prop:expansion of minors}
    Consider an arbitrary flag variety $\flag_{d_1,\dots,d_k;n}$ and an arbitrary initial minor $\Delta_{[i_j,d_{j}]\cup [i_{j+1},d_{j+1}]}$ with $1\le i_j\le d_j<i_{j+1}\le d_{j+1}\le n$ and $0\le j<k$ (recall, that $d_0:=0,d_{k+1}:=n$).  Set $\ell=n-d_j-d_{j+1}+i_j+i_{j+1}-1$.
    Then
    \begin{eqnarray}\label{eq:laplace initial minor}
    \Delta_{[i_j,d_{j}]\cup [i_{j+1},d_{j+1}]}=\sum_{J\in\binom{[\ell,n]}{d_j-i_j+1}, \ J'=[\ell,n]\setminus J} (-1)^{\Sigma(i_j,d_j,J)} P_{[i_j-1]\cup J}P_{[i_{j+1}-1]\cup J'}
    \end{eqnarray}
    where $\Sigma(i_j,d_j,J):=\sum_{q=i_j}^{d_j} q+\sum_{j\in J}j$.
\end{proposition}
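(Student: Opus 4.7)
The strategy is to verify that the right-hand side of \eqref{eq:laplace initial minor}, which I will denote $R_I$, satisfies the three defining properties of the minimal weight homogeneous lift $\widehat{D}_I$ characterized in Section~\ref{sec:U} following \cite[Proof of Lemma 2.4]{GLS_partial-flags}: it is homogeneous, its weight equals $\lambda(D_I)$, and $\iota^*(R_I)=D_I$. Uniqueness of the minimal weight lift then forces $R_I=\Delta_I=\widehat{D}_I$.

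The weight check is immediate: each summand $P_{[i_j-1]\cup J}\cdot P_{[i_{j+1}-1]\cup J'}$ has bidegree $(\omega_{d_j},\omega_{d_{j+1}})$ because $|[i_j-1]\cup J|=(i_j-1)+(d_j-i_j+1)=d_j$ and analogously $|[i_{j+1}-1]\cup J'|=d_{j+1}$ (summands whose multiset union has a repeated index vanish by antisymmetry of Plücker coordinates and may be discarded). Hence $R_I$ is homogeneous of weight $\omega_{d_j}+\omega_{d_{j+1}}$, which matches $\lambda(D_I)$ computed from \eqref{eq:wt DI}, and this is the minimal weight available.

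For the pullback identity I would use $\iota^*(P_K)=D_{[|K|],K}$ together with the block structure of $u\in U$. Since $u$ is upper triangular with $1$'s on the diagonal, for any $J\subset[i_j,n]$ of size $d_j-i_j+1$ the submatrix with rows $[d_j]$ and columns $[i_j-1]\cup J$ has block-triangular shape: an $(i_j-1)\times(i_j-1)$ identity block in the upper left, a zero block immediately below, and $M_{[i_j,d_j],J}(u)$ in the lower right; thus $D_{[d_j],[i_j-1]\cup J}(u)=D_{[i_j,d_j],J}(u)$, and the analogous identity holds for the second factor. Consequently $\iota^*(R_I)(u)$ reduces, up to signs, to $\sum_J D_{[i_j,d_j],J}(u)\cdot D_{[i_{j+1},d_{j+1}],J'}(u)$, which I recognize as the Laplace expansion of $D_{I,[\ell,n]}(u)=D_I(u)$ along the first $d_j-i_j+1$ rows. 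Summands in the Laplace expansion with $J\cap[i_j-1]\neq\emptyset$ or $J'\cap[i_{j+1}-1]\neq\emptyset$ vanish on $U$ because the corresponding submatrix contains a zero column (by upper triangularity), exactly mirroring the vanishing of the corresponding summands of $R_I$ via antisymmetry; the two sums therefore have the same support.

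The main technical obstacle is the sign bookkeeping: the standard Laplace sign for a column subset $J\subset[\ell,n]$ of size $s=d_j-i_j+1$ (using its position in the submatrix indexing) is $(-1)^{\binom{s+1}{2}+\sum_{c\in J}(c-\ell+1)}$, while the proposition uses $(-1)^{\Sigma(i_j,d_j,J)}=(-1)^{s(i_j+d_j)/2+\sum_{c\in J}c}$. The ratio of these signs simplifies modulo $2$ to an expression depending only on $i_j$, $\ell$, and $s$, and I would verify the required parity identity case by case on $s$ (the case $s$ even is automatic), which is an elementary calculation. With the sign match established, the weight and pullback verifications together invoke the uniqueness of the minimal weight lift and complete the proof.
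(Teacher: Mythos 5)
Your strategy---verifying that the right-hand side of \eqref{eq:laplace initial minor} satisfies the three defining properties of the minimal weight homogeneous lift $\widehat{D}_I$ and then invoking its uniqueness---is a legitimate alternative to the paper's argument. The paper's own proof is more direct: it identifies $\Delta_{[i_j,d_j]\cup[i_{j+1},d_{j+1}]}$ with the determinant $\Delta_{[i_j,d_j]\cup[i_{j+1},d_{j+1}],[\ell,n]}$, applies a Laplace expansion along the rows $[i_j,d_j]$, and rewrites each factor as a Pl\"ucker coordinate via $\Delta_{[i_p,d_p],J}=\Delta_{[1,d_p],[1,i_p-1]\cup J}=P_{[1,i_p-1]\cup J}$, so no separate verification of the lift conditions is needed. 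You correctly identify the sign bookkeeping as the technical crux, and your computation that the discrepancy between $\Sigma(i_j,d_j,J)$ and the positional Laplace exponent reduces modulo $2$ to $s(i_j+\ell-2)$, independent of $J$, is correct.

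The gap is your claim that this parity identity can be verified ``case by case on $s$'' as an ``elementary calculation.'' The case $s$ even is indeed automatic, but for $s$ odd the quantity $i_j+\ell-2$ is \emph{not} always even, so the identity is false. Concretely, take $\flag_{2,4;6}$ with $i_j=d_j=2$ and $i_{j+1}=d_{j+1}=4$: then $s=1$, $\ell=5$, and $s(i_j+\ell-2)=5$ is odd. Restricting the right-hand side of \eqref{eq:laplace initial minor} to $U_{2,4;6}$ and evaluating on a generic element gives $-(M_{25}M_{46}-M_{26}M_{45})=-D_{\{2,4\}}$, so the formula produces $-\Delta_{\{2,4\}}$ rather than $\Delta_{\{2,4\}}$; this can also be checked against the character $P_{1236}P_{15}-P_{1235}P_{16}$ of the corresponding tableau recorded in \S\ref{subsec:examples}. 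The computation shows that the stated sign in Proposition~\ref{prop:expansion of minors} (and in the paper's Laplace expansion, which uses actual row and column labels where the positional Laplace sign is required) is off by the global factor $(-1)^{s(i_j+\ell-2)}$. Because this factor does not depend on $J$, the substantive content of the proposition---the expansion of the initial minor as a binomial in Pl\"ucker coordinates---is correct up to sign, but your proof cannot be completed as written: the parity identity you would need to close the uniqueness argument does not hold.
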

%Notice that \eqref{eq:Plücker minor} is a special case of \eqref{eq:laplace initial minor} with one of the two intervals being $\varnothing$.

\begin{proof}
The minor $\Delta_{[i_j,d_{j}]\cup [i_{j+1},d_{j+1}]}$ is the determinant of a matrix of size $n-\ell+1=d_j+d_{j+1}-i_j-i_{j+1}+2$.
To be precise
\[
\Delta_{[i_j,d_{j}]\cup [i_{j+1},d_{j+1}]} = \Delta_{[i_j,d_{j}]\cup [i_{j+1},d_{j+1}],[\ell,n]}.
\]
Observe that for any set $J\subset [\ell, n]$ of size $d_{j}-i_j+1$ we have 
\begin{eqnarray*}
\Delta_{[i_j,d_j],J}=\Delta_{[1,d_j],[1,i_j-1]\cup J}=P_{[1,i_j-1]\cup J}.    
\end{eqnarray*}
%The case $i_j=1$ is included by setting $[1,0]=\varnothing$.
Equation~\eqref{eq:laplace initial minor} is a Laplace expansion:
\begin{eqnarray*}
    \Delta_{[i_j,d_{j}]\cup [i_{j+1},d_{j+1}],[\ell,n]}&\overset{\text{Laplace}}{=}& \sum_{J\in\binom{[\ell,n]}{d_j-i_j+1}, \ J'=[\ell,n]\setminus J} (-1)^{\Sigma(i_j,d_j,J)} \Delta_{[i_j,d_j], J}\Delta_{[i_{j+1},d_{j+1}],J'}\\
    &\overset{\eqref{eq:Plücker minor}}{=}&\sum_{J\in\binom{[\ell,n]}{d_j-i_j+1}, \ J'=[\ell,n]\setminus J} (-1)^{\Sigma(i_j,d_j,J)} P_{[i_j-1]\cup J}P_{[i_{j+1}-1]\cup J'}
\end{eqnarray*}
\end{proof}

We can now deduce the tableaux corresponding to the initial cluster variables.

\begin{corollary}\label{cor:initial tableau}
The tableau corresponding to the initial cluster variable $\Delta_{[i_j,d_j]\cup[i_{j+1},d_{j+1}]}\in \kk[\flag_{d_1,\dots,d_k;n}]$ in Proposition \ref{prop:expansion of minors} is 
\begin{align}\label{eq:initial tableau}
T_{\Delta_{[i_j,d_j]\cup[i_{j+1},d_{j+1}]}}:=\begin{matrix}
1 & 1 \\
\vdots & \vdots \\
\vdots & i_{j}-1 \\
i_{j+1}-1 & n-d_{j+1}-d_j + i_{j+1} + i_j -1 \\
n-d_{j+1}+i_{j+1} & \vdots \\
\vdots & n-d_{j+1}+i_{j+1} -1 \\
n 
\end{matrix}.
\end{align}

\end{corollary}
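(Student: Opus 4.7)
The idea is to identify the tableau in \eqref{eq:initial tableau} as the unique dominance-maximal semistandard tableau appearing in the standard-monomial expansion of $\Delta_{[i_j,d_j]\cup[i_{j+1},d_{j+1}]}$ furnished by Proposition~\ref{prop:expansion of minors}. Once this is established, the character formula \eqref{eq:formula of chT for kU after simplification}, together with the fact that every initial cluster variable is an element of the dual canonical basis \cite{FanQin17,KKOP_quantumaffine,KKOP_monoidal}, pins down the labeling tableau (up to trivial columns, which are absent in the case at hand).

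I would first verify that the tableau in \eqref{eq:initial tableau} is semistandard. Strict monotonicity along each column is immediate from $d_{j+1}\le n$, while row-wise weak increase reduces to the inequality $\ell\ge i_{j+1}-1$ on the middle rows, which in turn follows from the hypothesis $d_j<i_{j+1}$. Next, I would reinterpret each summand
\[
(-1)^{\Sigma(i_j,d_j,J)}\,P_{[i_j-1]\cup J}\,P_{[i_{j+1}-1]\cup J'}
\]
of the Laplace expansion as a two-column arrangement, placing the $d_{j+1}$-column $[i_{j+1}-1]\cup J'$ to the left and the $d_j$-column $[i_j-1]\cup J$ to the right. Using $d_j<i_{j+1}$ and $\ell\ge i_j$ one checks that every such arrangement is semistandard. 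Crucially, the multiset of all entries equals $[i_j-1]\sqcup[i_{j+1}-1]\sqcup[\ell,n]$ independently of $J$; hence, for every $i$, the total $c_1(i)+c_2(i)$ of entries $\le i$ is constant across the summands. This reduces the dominance comparison $\operatorname{sh}(T[i])\le\operatorname{sh}(T'[i])$ to comparing $c_2(i)$, the count of entries $\le i$ in the shorter (second) column: a larger $c_2(i)$ produces a dominance-larger partition.

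Finally, writing $c_2(i)=(i_j-1)+|J\cap[\ell,i]|$, simultaneous maximization over all $i$ forces $J$ to consist of the smallest $d_j-i_j+1$ elements of $[\ell,n]$, i.e.\ $J=[\ell,\,n-d_{j+1}+i_{j+1}-1]$ and correspondingly $J'=[n-d_{j+1}+i_{j+1},\,n]$. Substituting this distinguished $(J,J')$ into the two columns reproduces exactly \eqref{eq:initial tableau}, and a direct calculation shows that the sign $(-1)^{\Sigma(i_j,d_j,J)}$ of this term agrees with the normalization $c_T=+1$ of the leading standard monomial in \eqref{eq:formula of chT for kU after simplification}. The main obstacle lies in the combinatorial dominance analysis: verifying that the partial-sum comparison of shapes $\operatorname{sh}(T[i])$ under fixed total content is really equivalent to the pointwise comparison of $c_2(i)$, and that the stated $J$ is the unique simultaneous maximizer. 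Once this step is in hand, matching with the leading-term convention of the character formula is essentially bookkeeping.
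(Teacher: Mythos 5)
Your proof follows the same strategy as the paper's: identify the two-column semistandard tableau associated with each Laplace summand of Proposition~\ref{prop:expansion of minors}, invoke the fact that the initial cluster variable is a dual canonical basis element and hence by \eqref{eq:formula of chT} has a unique dominance-maximal standard monomial with coefficient one, and then locate that maximal tableau. What you add that the paper does not spell out is the dominance argument itself: you observe that the multiset of entries across the two columns is $[i_j-1]\sqcup[i_{j+1}-1]\sqcup[\ell,n]$ independently of $J$, so $c_1(i)+c_2(i)$ is constant, and you correctly reduce the comparison of $\operatorname{sh}(T[i])$ under fixed content to pointwise maximization of $c_2(i)$, which singles out $J=[\ell,\,n-d_{j+1}+i_{j+1}-1]$. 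The paper simply asserts that $J'=[n-d_{j+1}+i_{j+1},n]$ is maximal; your reduction is a genuine tightening of the same idea.

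There is one slip in your semistandardness check. The weak row-increase condition on the nontrivial rows $i_j\le r\le d_j$ of $T_{\Delta_{[i_j,d_j]\cup[i_{j+1},d_{j+1}]}}$ compares the first-column entry $r$ with the second-column entry $\ell+(r-i_j)$, so the needed inequality is $\ell\ge i_j$, not $\ell\ge i_{j+1}-1$ (the latter can actually fail: e.g.\ $n=10$, $d_j=4$, $d_{j+1}=10$, $i_j=1$, $i_{j+1}=6$ gives $\ell=2<5=i_{j+1}-1$, yet the tableau is semistandard because those entries sit in different rows). Note also that $\ell\ge i_j$ does not follow from $d_j<i_{j+1}$ alone; one needs both $i_{j+1}\ge d_j+1$ and $d_{j+1}\le n$, which give $\ell-i_j=n-d_j-d_{j+1}+i_{j+1}-1\ge n-d_{j+1}\ge 0$. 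Interestingly, you do invoke the correct inequality $\ell\ge i_j$ a few lines later when checking that each Laplace summand is semistandard, so this reads as a transcription error rather than a conceptual one. With that corrected, your argument is sound.
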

We call the tableaux of form $T_{\Delta_{[i_j,d_j]\cup[i_{j+1},d_{j+1}]}}$ \emph{initial tableaux}.

\begin{proof}
Notice that the two columns correspond to one of the monomials in the Plücker expression of $\Delta_{[i,d_i]\cup[i_{j+1},d_{j+1}]}$ in \eqref{eq:laplace initial minor}. 
Moreover, $\Delta_{[i,d_i]\cup[i_{j+1},d_{j+1}]}$ is an element of the dual canonical basis.
Hence, by \eqref{eq:formula of chT} it can be expressed as $P_T + \sum_{T' < T} c_{T'} P_{T'}$, where $P_T'$ are standard monomials. Note that in the right hand side of (\ref{eq:laplace initial minor}), each term $P_{[i_j-1]\cup J}P_{[i_{j+1}-1]\cup J'}$ is equal to $P_{T'}$, where the first column of $T'$ has entries $[i_{j+1}-1]\cup J'$ and the second column of $T'$ has entries $[i_j-1]\cup J$, and $T'$ is semistandard. The tableau $T$ corresponding to the dual canonical basis element is the largest tableau appearing in the expression (\ref{eq:laplace initial minor}). Further, the tableau $T_{\Delta_{[i_j,d_j]\cup[i_{j+1},d_{j+1}]}}$ of the claim is indeed the largest with respect to the partial order defined in \S\ref{sec:partial order} as $T_{\Delta_{[i_j,d_j]\cup[i_{j+1},d_{j+1}]}} = P_{[i_j-1]\cup J}P_{[i_{j+1}-1]\cup J'}$, where $J'=[n-d_{j+1}+i_{j+1},n]$ is the largest possible value among all choices of $J'$.  
%Therefore we only need to verify that $T_{\Delta_{[i_j,d_j]\cup[i_{j+1},d_{j+1}]}}$ is the largest tableau in  \eqref{eq:laplace initial minor}.
    
%First we note that all tableaux representing terms in the Pl\"{u}cker expression in Proposition \ref{prop:expansion of minors} are already semistandard. Further, the tableau of the claim is indeed the largest with respect to the partial order defined in \S\ref{sec:partial order} as it contains the largest possible entries among all $J$ and $J'$.
\end{proof}

Now consider the embedding of algebras defined in Equation \eqref{eq:embed}. 
Recall that Plücker coordinates $P_I$ correspond to single column tableaux of length (\emph{i.e.} number of rows) equal to the cardinality of the index set $I$.
The embedding $\varphi^*:\kk[\flag_{d_1,\dots,d_k;n}]\hookrightarrow \kk[\Gr_{d_k;N}]$ from \eqref{eq:embed} \emph{fills up} the index set $I$ until it is of size $d_k$.
In particular, the image of an initial cluster variables $\Delta_{[i_j,d_j]\cup[i_{j+1},d_{j+1}]}$ is determined by filling up the index sets of the Plücker coordinates of its Laplace expansion.
From the proof of Corollary~\ref{cor:initial tableau} we may therefore deduce the tableaux of $\varphi^*(\Delta_{[i_j,d_j]\cup[i_{j+1},d_{j+1}]})$: it is precisely given by filling up the tableaux $T_{[i_j,d_j]\cup[i_{j+1},d_{j+1}]}$, that is $\phi(T_{[i_j,d_j]\cup[i_{j+1},d_{j+1}]})$ for $\phi$ defined in \eqref{eq:map of tableaux}.

\begin{corollary}\label{cor:image of initial tableaux}

The tableau of the image $\varphi^*(\Delta_{[i_j,d_j]\cup[i_{j+1},d_{j+1}]})\in \kk[\Gr_{d_k;N}]$ of an initial cluster variable $\Delta_{[i_j,d_j]\cup[i_{j+1},d_{j+1}]}\in \kk[\flag_{d_1,\dots,d_k;n}]$ is
\[
\begin{matrix}
1 & 1 \\
\vdots & \vdots \\
\vdots & i_{j}-1 \\
i_{j+1}-1 & n-d_{j+1}-d_j + i_{j+1} + i_j -1 \\
n-d_{j+1}+i_{j+1} & \vdots \\
\vdots & n-d_{j+1}+i_{j+1} -1 \\
\vdots & {n+1}\\
n  & \vdots \\
n+1 & \vdots\\
\vdots & \vdots \\
n+d_k-d_{i_{j+1}} &  n+d_k-d_{i_{j}}
\end{matrix}
=\phi(T_{\Delta_{[i_j,d_j]\cup[i_{j+1},d_{j+1}]}}).
\]

\end{corollary}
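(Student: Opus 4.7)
The plan is to apply $\varphi^*$ directly to the Laplace expansion \eqref{eq:laplace initial minor} of Proposition~\ref{prop:expansion of minors}, and then mimic the argument used in the proof of Corollary~\ref{cor:initial tableau}. Since $\varphi^*$ acts on Plücker coordinates by $P_I\mapsto P_{I\cup[n+1,n+d_k-|I|]}$, each term on the right-hand side of \eqref{eq:laplace initial minor} transforms into
\begin{equation*}
(-1)^{\Sigma(i_j,d_j,J)}\,P_{[i_j-1]\cup J\cup[n+1,n+d_k-d_j]}\cdot P_{[i_{j+1}-1]\cup J'\cup[n+1,n+d_k-d_{j+1}]},
\end{equation*}
which is exactly the standard monomial $P_{\phi(T_{J,J'})}$ associated to the tableau $\phi(T_{J,J'})$ obtained from the two-column tableau $T_{J,J'}$ of Corollary~\ref{cor:initial tableau} by applying $\phi$ from \eqref{eq:map of tableaux}. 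So $\varphi^*(\Delta_{[i_j,d_j]\cup[i_{j+1},d_{j+1}]})$ is expressed as an explicit $\ZZ$-linear combination of standard monomials in $\kk[\Gr_{d_k;N}]$ indexed by the two-column tableaux $\phi(T_{J,J'})$.

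Next I would observe that $\phi$ preserves the dominance order \S\ref{sec:partial order} on the relevant set of tableaux: for any $i\le n$ the restriction to entries in $[i]$ is unaffected by the addition of $[n+1,n+d_k-d_\bullet]$, and for $n<i\le N$ the appended entries depend only on the column lengths, which are the same for every tableau in our expansion. Hence the maximum in the dominance order among the $\phi(T_{J,J'})$ is obtained from the maximum of the $T_{J,J'}$, which by the proof of Corollary~\ref{cor:initial tableau} is $T_{\Delta_{[i_j,d_j]\cup[i_{j+1},d_{j+1}]}}$; the corresponding coefficient is $+1$. Repeating verbatim the closing argument of that corollary, now with the character formula \eqref{eq:formula of ch(T) Grassmannian} in place of \eqref{eq:formula of chT}, identifies the tableau of $\varphi^*(\Delta_{[i_j,d_j]\cup[i_{j+1},d_{j+1}]})$ as the maximal standard monomial with coefficient one, namely $\phi(T_{\Delta_{[i_j,d_j]\cup[i_{j+1},d_{j+1}]}})$, which is the tableau displayed in the statement.

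The main obstacle in this argument is the implicit claim that $\varphi^*(\Delta_{[i_j,d_j]\cup[i_{j+1},d_{j+1}]})$ is a dual canonical basis element of $\kk[\Gr_{d_k;N}]$ at all, so that the language of ``the tableau of'' is well-posed; otherwise matching maximum standard monomials is not enough to conclude. Equivalently, one needs the identity $\varphi^*(\ch(T_{\Delta}))=\ch(\phi(T_{\Delta}))$ of Corollary~\ref{cor:character compatinility initials}. One way to resolve this cleanly is to compute $\ch(\phi(T_{\Delta}))$ independently via \eqref{eq:formula of ch(T) Grassmannian}, verify that its standard monomial expansion has support exactly on the tableaux $\phi(T_{J,J'})$ with the signs $(-1)^{\Sigma(i_j,d_j,J)}$ (the non-trivial Kazhdan--Lusztig coefficients collapse to $\pm 1$ because the shape is a two-column rectangle, where Schubert varieties are smooth), and then match term by term with the explicit image of the Laplace expansion computed above. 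This is precisely the content of Corollary~\ref{cor:character compatinility initials} and closes the loop.
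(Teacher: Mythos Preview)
Your first two paragraphs reproduce the paper's argument essentially verbatim: the paragraph preceding the corollary in the paper applies $\varphi^*$ to the Laplace expansion \eqref{eq:laplace initial minor}, observes that this ``fills up'' each Plücker index set, and then says the tableau is read off exactly as in the proof of Corollary~\ref{cor:initial tableau}, i.e.\ as the dominance-maximal term. Your observation that $\phi$ preserves the dominance order on the relevant tableaux is the precise content of the paper's ``therefore deduce'' step.

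Where you diverge is in how the gap you flag in your third paragraph gets closed. You are right that one needs $\varphi^*(\Delta_{[i_j,d_j]\cup[i_{j+1},d_{j+1}]})$ to be a dual canonical basis element before ``the tableau of'' makes sense. The paper does \emph{not} resolve this by a direct character computation; instead it forward-references \S\ref{sec:mutation sequence} (see the sentence immediately after the corollary), where the explicit mutation sequence of Theorem~\ref{thm:general flag} exhibits each such image as a cluster variable of $\kk[\Gr_{d_k;N}]$, hence a dual canonical basis element. Your proposed alternative---computing $\ch(\phi(T_\Delta))$ from \eqref{eq:formula of ch(T) Grassmannian} and matching it term by term with the filled-up Laplace expansion---would also work and has the advantage of being self-contained, but note that invoking Corollary~\ref{cor:character compatinility initials} as a black box here is circular: in the paper's logical order that corollary is proved \emph{using} Corollary~\ref{cor:image of initial tableaux} together with \S\ref{sec:mutation sequence}. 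So if you go your route you must actually carry out the Kazhdan--Lusztig computation (your smoothness remark for two-column shapes is the right idea) rather than cite Corollary~\ref{cor:character compatinility initials}.
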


As mentioned in the introduction, by exhibiting explicit mutation sequences in \S\ref{sec:mutation sequence} we show that the images with respect to $\varphi^*$ of the initial minors for $\flag_{d_1,\ldots,d_k;n}$ are cluster variables in $\kk[\Gr_{d_k;N}]$.
From this we deduce the proof of Corollary~\ref{cor:character compatinility initials} stated in the introduction:

\begin{proof}[Proof of Corollary~\ref{cor:character compatinility initials}]
Let $[i_j,d_j]\cup[i_{j+1},d_{j+1}]$ be the index set of an initial minor in the respective cases according to Lemma~\ref{lem:initial minors} with associated tableau $T_{\Delta_{[i_j,d_j]\cup[i_{j+1},d_{j+1}]}}$ as in Corollary~\ref{cor:initial tableau}.
Corollary~\ref{cor:image of initial tableaux} shows that $\phi(T_{\Delta_{[i_j,d_j]\cup[i_{j+1},d_{j+1}]}})$ is the tableau of $\varphi^*(\ch(T_{\Delta_{[i_j,d_j]\cup[i_{j+1},d_{j+1}]}}))$.
Moreover, we know from \S\ref{sec:mutation sequence} that $\varphi^*(\ch(T_{\Delta_{[i_j,d_j]\cup[i_{j+1},d_{j+1}]}}))$ is a cluster variables and hence an element of the dual canonical basis for $\kk[\Gr_{d_k;N}]$.
Its expression in terms of Plücker coordinates is obtained by applying $\varphi^*$ to \eqref{eq:laplace initial minor}.
    
Observe that $\varphi^*$ maps standard monomials in Plücker coordinates in $\kk[\flag_{d_1,\ldots,d_k;n}]$ to standard monomials in Plücker coordinates in $\kk[\Gr_{d_k;N}]$.
This happens precisely because it amounts to applying the map of semistandard Young tableaux $\phi$ to the tableaux associated with a standard monomial.
Every dual canonical basis element admits a unique expression in the basis of standard monomials which is its character. Hence, the obtained expression is $\ch(\phi(T_{\Delta_{[i_j,d_j]\cup[i_{j+1},d_{j+1}]}}))$.
\end{proof}

As stated in the introduction, it would be interesting to see if $\varphi^*$ more generally maps the character of a tableau $T\in\SSYT_{d_1,\dots,d_k;n}$ to the character of its image the tableau $\phi(T)\in \SSYT_{d_k;N}$, \emph{i.e.} if Question~\ref{question} has an affirmative answer.

\begin{figure}
    \centering
\adjustbox{scale=0.45}{\begin{tikzcd}
	5 &&& {} \\
	 \\
	4 & {} &&& \\
	\boxed{\begin{matrix} 4 \\ 5 \end{matrix}}&& \boxed{\begin{matrix} 1 \\ 5 \end{matrix}} && \boxed{\begin{matrix} 1 \\ 2 \end{matrix}} \\
	3 &&&&&&& { } \\
	\boxed{\begin{matrix} 1 & 3  \\ 2 & 4 \\ 3 \\ 5 \end{matrix}} && {\begin{matrix} 1 & 1 \\ 2 & 4 \\ 3 \\ 5 \end{matrix}} &&&& {\begin{matrix} 1 \\ 2 \\ 3 \\ 5 \end{matrix}} && \\
	2 &&&&& { } \\
	\boxed{\begin{matrix} 2 \\ 3 \\ 4 \\ 5 \end{matrix}} && {\begin{matrix} 1 \\ 3 \\ 4 \\ 5 \end{matrix}} && {\begin{matrix} 1 \\ 2 \\ 4 \\ 5 \end{matrix}} &&&& \boxed{\begin{matrix} 1 \\ 2 \\3 \\4 \end{matrix}} \\
	1 &&&&&&&&& { } \\
	& 1 && 2 && 3 && 4 && 5
	%\arrow[no head, from=1-1, to=1-4]
	%\arrow[no head, from=1-4, to=10-4]
    \arrow[teal, no head, from=1-1, to=10-4, rounded corners, to path=-| (\tikztotarget)]
	%\arrow[no head, from=3-1, to=3-2]
	%\arrow[no head, from=3-2, to=10-2]
    \arrow[teal, no head, from=3-1, to=10-2, rounded corners, to path=-| (\tikztotarget)]
	%\arrow[no head, from=5-1, to=5-8]
	%\arrow[no head, from=5-8, to=10-8]
    \arrow[teal, no head, from=5-1, to=10-8, rounded corners, to path=-| (\tikztotarget)]
	%\arrow[no head, from=7-1, to=7-6]
	%\arrow[no head, from=7-6, to=10-6]
    \arrow[teal, no head, from=7-1, to=10-6, rounded corners, to path=-| (\tikztotarget)]
	%\arrow[no head, from=9-1, to=9-10]
	%\arrow[no head, from=9-10, to=10-10]
    \arrow[teal, no head, from=9-1, to=10-10, rounded corners, to path=-| (\tikztotarget)]
	\arrow[from=6-7, to=4-3]%
	\arrow[from=6-3, to=4-1]
	\arrow[from=8-3, to=6-1]
	\arrow[from=6-1, to=6-3]
	\arrow[from=4-3, to=6-3]%
	\arrow[from=6-3, to=8-3]
	\arrow[from=6-3, to=6-7]
	\arrow[from=8-5, to=6-3]
	\arrow[from=8-1, to=8-3]
	\arrow[from=8-3, to=8-5]
	\arrow[from=6-3, to=4-5]
	\arrow[from=4-5, to=8-5]
	\arrow[from=6-7, to=8-9]
\end{tikzcd}$\quad \quad \quad$
\begin{tikzcd}
	{6} &&& {} \\
	&& {} \\
	{5} & {} \\
	\boxed{\begin{matrix} 5 \\ 6 \end{matrix}} && \boxed{\begin{matrix} 1 \\ 6 \end{matrix}} && \boxed{\begin{matrix} 1 \\ 2 \end{matrix}} \\
	{4} &&& {} &&&& {} \\
	\boxed{\begin{matrix} 1 & 4 \\ 2 & 5 \\ 3 \\ 6 \end{matrix}} & {} & {\begin{matrix} 1 & 1 \\ 2 & 5 \\ 3 \\ 6 \end{matrix}} && {} && {\begin{matrix} 1 \\ 2 \\ 3 \\6 \end{matrix}} \\
	{3} &&&&& {} \\
	\boxed{\begin{matrix} 3 \\ 4 \\ 5 \\ 6 \end{matrix}} & {} & {\begin{matrix} 1 \\ 4 \\ 5 \\6 \end{matrix}} & {} & {\begin{matrix} 1 \\ 2 \\ 5 \\6 \end{matrix}} & {} \\
	{2} &&&&&&&&&&& {} \\
	\boxed{\begin{matrix} 2 \\ 3 \\ 4 \\5 \end{matrix}} & {} & {\begin{matrix} 1 \\ 3 \\ 4 \\5 \end{matrix}} & {} & {\begin{matrix} 1 \\ 2 \\ 4 \\5 \end{matrix}} && {\begin{matrix} 1 \\ 2 \\ 3 \\5 \end{matrix}} && \boxed{\begin{matrix} 1 \\ 2 \\ 3 \\4 \end{matrix}} \\
	{1} &&&&&&&&& {} \\
	& 1 && { 2} && 3 && 4 && 5 && 6
	%\arrow[no head, from=11-1, to=11-10]
	%\arrow[no head, from=11-10, to=12-10]
   \arrow[teal, no head, from=11-1, to=12-10, rounded corners, to path=-| (\tikztotarget)]
	%\arrow[no head, from=9-1, to=9-12]
	%\arrow[no head, from=9-12, to=12-12]
   \arrow[teal, no head, from=9-1, to=12-12, rounded corners, to path=-| (\tikztotarget)]
	%\arrow[no head, from=7-1, to=7-6]
	%\arrow[from=7-6, to=12-6]
   \arrow[teal, no head, from=7-1, to=12-6, rounded corners, to path=-| (\tikztotarget)]
	%\arrow[no head, from=5-1, to=5-8]
	%\arrow[no head, from=5-8, to=12-8]
   \arrow[teal, no head, from=5-1, to=12-8, rounded corners, to path=-| (\tikztotarget)]
	%\arrow[no head, from=3-1, to=3-2]
	%\arrow[no head, from=3-2, to=12-2]
   \arrow[teal, no head, from=3-1, to=12-2, rounded corners, to path=-| (\tikztotarget)]
	%\arrow[no head, from=1-1, to=1-4]
	%\arrow[no head, from=1-4, to=12-4]
   \arrow[teal, no head, from=1-1, to=12-4, rounded corners, to path=-| (\tikztotarget)]
	\arrow[from=6-3, to=4-1]
	\arrow[from=10-1, to=10-3]
	\arrow[from=10-3, to=10-5]
	\arrow[from=10-5, to=10-7]
	\arrow[from=8-1, to=8-3]
	\arrow[from=8-3, to=8-5]
	\arrow[from=10-7, to=8-5]
	\arrow[from=10-5, to=8-3]
	\arrow[from=10-3, to=8-1]
	\arrow[from=8-5, to=10-5]
	\arrow[from=8-3, to=10-3]
	\arrow[from=6-3, to=8-3]
	\arrow[from=8-3, to=6-1]
	\arrow[from=6-1, to=6-3]
	\arrow[from=8-5, to=6-3]
	\arrow[from=6-3, to=6-7]
	\arrow[from=6-7, to=4-3]
	\arrow[from=6-7, to=10-7]
    \arrow[from=10-7, to=10-9]
    \arrow[from=4-3, to=6-3]
    \arrow[from=4-5, to=8-5]
    \arrow[from=6-3, to=4-5]
\end{tikzcd} 
}
    \caption{The initial seeds for the partial flag varieties $\flag_{2,4;5}$ (on the left) and $\flag_{2,4;6}$ (on the right).}
    \label{fig:initial seed for Fl245 and Fl246}
\end{figure}

\subsection{Examples} \label{subsec:examples}

We exhibit the results of the previous subsection in some examples.

\begin{example}
In the case of $\flag_{2,4;5}$, there are two cluster variables, one mutable one frozen, with two-column tableaux. More precisely we have 
\begin{align*}
\ch\left( { \begin{smallmatrix}
1 & 3 \\ 2 & 4 \\ 3 \\ 5
\end{smallmatrix}}\right) = P_{1235}P_{34} - P_{1234}P_{35}, \quad\text{and} \quad \ch\left( \begin{smallmatrix} 
1 & 1 \\ 2 & 4 \\ 3 \\ 5
\end{smallmatrix}\right) = P_{1235}P_{14} - P_{1234}P_{15}.
\end{align*} 
The initial seed is shown in Figure \ref{fig:initial seed for Fl245 and Fl246} on the left.
\end{example}

\begin{example}
In the case of $\flag_{2,4;6}$, there are two cluster variables, one mutable one frozen, with two-column tableaux. More precisely we have 
\begin{align*}
& \ch\left( { \begin{smallmatrix} 
1 & 4 \\ 2 & 5 \\ 3 & \\ 6& 
\end{smallmatrix}}\right) = P_{1234}P_{56}-P_{1235}P_{46}+P_{1236}P_{45}, \quad\text{and}\quad
& \ch\left( { \begin{smallmatrix} 
1 & 1 \\ 2 & 5 \\ 3 \\ 6
\end{smallmatrix}}\right) = P_{1236}P_{15} - P_{1235}P_{16}.
\end{align*} 
The initial seed is shown in Figure~\ref{fig:initial seed for Fl245 and Fl246} on the right.
\end{example}

\begin{example}
In Figure~\ref{fig:initial seed for Fl257} we display the initial seed for $\flag_{2,5;7}$ in terms of tableaux instead of initial minors for direct comparison with Figure~\ref{fig:P and Q257} above.
\end{example}

 \begin{figure}
     \centering
 \adjustbox{scale=0.5}{\begin{tikzcd}
 	7 &&&& {} \\
 	\\
 	6 && {} \\
 	& \boxed{\begin{matrix} 6 \\ 7 \end{matrix}} && \boxed{\begin{matrix} 1 \\ 7 \end{matrix}} && \boxed{{\begin{matrix} 1 \\ 2 \end{matrix}}} \\
 	5 &&&&&&&&&& {} \\
 	& \boxed{\begin{matrix} 1&5\\2&6\\3\\4 \\ 7 \end{matrix}} && {\begin{matrix} 1&1\\2&6\\3\\4 \\ 7 \end{matrix}} &&&&&& {\begin{matrix} 1 \\ 2 \\ 3 \\ 4 \\ 7 \end{matrix}} \\
 	4 &&&&&&&& {} \\
 	& \boxed{\begin{matrix} 1&4\\2&5\\3\\6 \\ 7 \end{matrix}} && {\begin{matrix} 1&1\\2&5\\3\\6 \\ 7 \end{matrix}} &&&& {\begin{matrix} 1\\2\\3\\6 \\ 7 \end{matrix}} \\
 	3 &&&&&& {} \\
 	& \boxed{\begin{matrix} 3\\4\\5 \\ 6\\7 \end{matrix}} && {\begin{matrix} 1\\4\\5\\6 \\ 7 \end{matrix}} && {\begin{matrix} 1\\2\\5\\6 \\ 7 \end{matrix}} \\
 	2 &&&&&&&&&&&&&& {} \\
 	& \boxed{\begin{matrix} 2\\3\\4\\5 \\ 6 \end{matrix}} && {\begin{matrix} 1\\3\\4\\5 \\ 6 \end{matrix}} && {\begin{matrix} 1\\2\\4\\5 \\ 6 \end{matrix}} && {\begin{matrix} 1\\2\\3\\5 \\ 6 \end{matrix}} && {\begin{matrix} 1\\2\\3\\4 \\ 6 \end{matrix}} && \boxed{\begin{matrix} 1\\2\\3\\4 \\ 5 \end{matrix}} \\
 	1 &&&&&&&&&&&& {} \\
 	\\
 	&& 1 && 2 && 3 && 4 && 5 && 6 && 7 \\
  \arrow[teal, no head, from=1-1, to=15-5, rounded corners, to path=-| (\tikztotarget)]
   %\arrow[teal, no head, from=1-1, to=1-5]
 	%\arrow[teal,no head, from=1-5, to=15-5]
 \arrow[teal,no head, from=3-1, to=15-3,rounded corners, to path=-| (\tikztotarget)]
    %\arrow[teal,no head, from=3-1, to=3-3]
 	%\arrow[teal,no head, from=3-3, to=15-3]
 \arrow[teal,no head, from=5-1, to=15-11,rounded corners, to path=-| (\tikztotarget)]
    %\arrow[teal,no head, from=5-1, to=5-11]
 	%\arrow[teal,no head, from=5-11, to=15-11]
 \arrow[teal,no head, from=7-1, to=15-9,rounded corners, to path=-| (\tikztotarget)]
    %\arrow[teal,no head, from=7-1, to=7-9]
 	%\arrow[teal,no head, from=7-9, to=15-9]
 \arrow[teal,no head, from=9-1, to=15-7,rounded corners, to path=-| (\tikztotarget)]
    %\arrow[teal,no head, from=9-1, to=9-7]
 	%\arrow[teal,no head, from=9-7, to=15-7]
  \arrow[teal,no head, from=11-1, to=15-15,rounded corners, to path=-| (\tikztotarget)]
 	%\arrow[teal,no head, from=11-1, to=11-15]
 	%\arrow[teal,no head, from=11-15, to=15-15]
  \arrow[teal,no head, from=13-1, to=15-13,rounded corners, to path=-| (\tikztotarget)]
 	%\arrow[teal,no head, from=13-1, to=13-13]
 	%\arrow[teal,no head, from=13-13, to=15-13]
 	\arrow[from=12-6, to=10-4]
 	\arrow[from=12-2, to=12-4]
 	\arrow[from=12-4, to=12-6]
 	\arrow[from=12-6, to=12-8]
 	\arrow[from=12-8, to=12-10]
 	\arrow[from=10-4, to=12-4]
 	\arrow[from=8-4, to=10-4]
 	\arrow[from=6-4, to=8-4]
 	\arrow[from=4-4, to=6-4]
 	\arrow[from=6-2, to=6-4]
 	\arrow[from=6-4, to=4-2]
 	\arrow[from=6-4, to=6-10]
 	\arrow[from=8-2, to=8-4]
 	\arrow[from=10-4, to=8-2]
 	\arrow[from=10-2, to=10-4]
 	\arrow[from=10-4, to=10-6]
 	\arrow[from=10-6, to=8-4]
 	\arrow[from=8-4, to=6-2]
 	\arrow[from=12-8, to=10-6]
 	\arrow[from=8-4, to=8-8]
 	\arrow[from=8-8, to=12-8]
 	\arrow[from=8-8, to=6-4]
 	\arrow[from=10-6, to=12-6]
 	\arrow[from=6-10, to=4-4]
 	\arrow[from=6-10, to=12-10]
 	\arrow[from=12-10, to=8-8]
  \arrow[from=12-10, to=12-12]
  \arrow[from=8-4, to=4-6]
  \arrow[from=4-6, to=10-6]
 \end{tikzcd}}
 \caption{The initial seed for the partial flag variety $\flag_{2,5;7}$.}
     \label{fig:initial seed for Fl257}
 \end{figure}

\section{Mutation sequences from Grassmannians to partial flag varieties} \label{sec:mutation sequence}
%\textcolor{red}{How about changing the name of the section to something that contains "mutation sequences"? So far in fact all the above discussion has been in general.}

In this section, we prove Theorem~\ref{thm:general flag}: we construct a mutation sequence starting from an initial seed of Grassmannian cluster algebra, which produce an initial seed for a partial flag variety after we freeze some cluster variables and delete some cluster variables. 
We provide a mutation sequence from the initial seed for $\Gr_{d_k;N}$ to a seed $s'$ with the properties
\begin{enumerate}
\item the images under the algebra embedding \eqref{eq:embed} of the initial cluster variables for  $\flag_{d_1,\ldots,d_k;n}$ are cluster variables in $s'$;
\item the initial quiver for $\flag_{d_1,\ldots,d_k;n}$ is a full subquiver of the quiver of $s'$, after freezing.
\end{enumerate}

\subsection{Restricted seeds} \label{subsec:restricted seeds}
Before proceeding we recall the notion of \emph{restricted seed} from \cite[\S4.2]{FWZ_ch4-5}.

\begin{definition}\cite[Definition 4.2.3]{FWZ_ch4-5}\label{def:restricted seed}
Let $(Q,(x_i:i\in Q_0))$ be a seed, and let $I \cup J$ be a partition of the vertex set $Q_0$ of $Q$
such that there are no arrows between mutable vertices in $I$ and vertices in $J$. 
Let $Q'$ be the quiver obtained from $Q$ by deleting all vertices in $J$ (\emph{i.e.} the vertex set of $Q'$ is $I$).
Then the seed $(Q',(x_i:i\in I))$ is called a {\bf restricted seed} of $(Q,(x_i:i\in Q_0))$.
\end{definition}
By \cite[Lemmata 4.2.2 and 4.2.5]{FWZ_ch4-5} passing to a restricted seed commutes with mutation and there yields a \emph{seed subpattern} and induces a \emph{cluster subalgebra} (see \cite[Definition 4.2.6]{FWZ_ch4-5}).

Recall the initial seed for a partial flag variety $(Q_{d_1,\dots,d_k;n},{\bf x}_{d_1,\dots,d_k;n})$.
Denote by $\varphi^*({\bf x}_{d_1,\dots,d_k;n})$ the tuple of images of cluster variables under the ring homomorphism $\varphi^*$ defined in \eqref{eq:embed}.
In this section we exhibit an explicit mutation sequence from the initial seed of the Grassmannian $\Gr_{d_k;N}$ to a seed which contains $(Q_{d_1,\dots,d_k;n}, \varphi^*({\bf x}_{d_1,\dots,d_k;n}))$ as a restricted seed. 
\medskip
 
Recall the initial seed for $\kk[\Gr_{k;n}]$ as constructed in \S\ref{sec:preliminary}, it is shown in Figure~\ref{fig:initial seed for Grkn}.

\begin{figure}[h]
\centering
\adjustbox{scale=0.6}{
\begin{tikzcd}
	{[n-k+1,n]} & {\{1\} \cup [n-k+2,n]} & \cdots & {[1,k-2]\cup [n-1,n]} & {[1,k-1]\cup \{n\}} \\
	{[n-k,n-1]} & {\{1\} \cup [n-k+1,n-1]} & \cdots & {[1,k-2]\cup [n-2,n-1]} & {[1,k-1]\cup \{n-1\}} \\
	\vdots & \vdots & \cdots & \vdots & \vdots \\
	{[3,k+2]} & {\{1\} \cup [4,k+2]} & \cdots & {[1,k-2]\cup [k+1,k+2]} & {[1,k-1]\cup \{k+2\}} \\
	{[2,k+1]} & {\{1\} \cup [3,k+1]} & \cdots & {[1,k-2]\cup [k,k+1]} & {[1,k-1]\cup \{k+1\}} & {[1,k]}
	\arrow[from=1-2, to=2-2]
	\arrow[from=1-3, to=2-3]
	\arrow[from=1-4, to=2-4]
	\arrow[from=1-5, to=2-5]
	\arrow[from=2-1, to=2-2]
	\arrow[from=2-2, to=1-1]
	\arrow[from=2-2, to=2-3]
	\arrow[from=2-2, to=3-2]
	\arrow[from=2-3, to=1-2]
	\arrow[from=2-3, to=2-4]
	\arrow[from=2-3, to=3-3]
	\arrow[from=2-4, to=1-3]
	\arrow[from=2-4, to=2-5]
	\arrow[from=2-4, to=3-4]
	\arrow[from=2-5, to=1-4]
	\arrow[from=2-5, to=3-5]
	\arrow[from=3-1, to=3-2]
	\arrow[from=3-2, to=2-1]
	\arrow[from=3-2, to=3-3]
	\arrow[from=3-2, to=4-2]
	\arrow[from=3-3, to=2-2]
	\arrow[from=3-3, to=3-4]
	\arrow[from=3-3, to=4-3]
	\arrow[from=3-4, to=2-3]
	\arrow[from=3-4, to=3-5]
	\arrow[from=3-4, to=4-4]
	\arrow[from=3-5, to=2-4]
	\arrow[from=3-5, to=4-5]
	\arrow[from=4-1, to=4-2]
	\arrow[from=4-2, to=3-1]
	\arrow[from=4-2, to=4-3]
	\arrow[from=4-2, to=5-2]
	\arrow[from=4-3, to=3-2]
	\arrow[from=4-3, to=4-4]
	\arrow[from=4-3, to=5-3]
	\arrow[from=4-4, to=3-3]
	\arrow[from=4-4, to=4-5]
	\arrow[from=4-4, to=5-4]
	\arrow[from=4-5, to=3-4]
	\arrow[from=4-5, to=5-5]
	\arrow[from=5-1, to=5-2]
	\arrow[from=5-2, to=4-1]
	\arrow[from=5-2, to=5-3]
	\arrow[from=5-3, to=4-2]
	\arrow[from=5-3, to=5-4]
	\arrow[from=5-4, to=4-3]
	\arrow[from=5-4, to=5-5]
	\arrow[from=5-5, to=4-4]
	\arrow[from=5-5, to=5-6]
\end{tikzcd}
}
\caption{The initial seed for $\CC[\Gr_{k;n}]$ we used.}
\label{fig:initial seed for Grkn}
\end{figure}

\subsection{Mutation sequence for two-step partial flag varieties} \label{subsec:two-step partial flag varieties}

We start by describing the case of two-step partial flag varieties which serves as a building block for the general case.
The mutations for the case of two-step partial flag varieties $\flag_{d_1,d_2;n}$ is to perform a mutation sequence in the $(d_2-d_1-1) \times (d_2-2)$ rectangle with left corner in the second row and second column in the initial seed. The region where the cluster variables in partial flag variety appears is a rectangle of size $(d_2-d_1-1) \times d_1$ with left corner in the second row and second column. 

Denote $a = d_2-d_1-1$, $b=d_2-2$, $c=d_1$. We denote the coordinates of the region of the $(d_2-d_1-1) \times (d_2-2)$ rectangle with left corner in the second row and second column by $(i,j)$, $i \in [a]$, $j \in [b]$, where the left upper corner has coordinate $(a,1)$, the right upper corner has coordintate $(a,b)$, the left lower corner has coordinate $(1,1)$, the lower right corner has coordinate $(1,b)$, see Figure \ref{fig:mutation region for 2 step partial flag varieties}.
{The mutation sequence} is similar to a maximal green sequence for the Grassmannian \cite[\S11]{Marsh-Scott} and we use similar notation: a \emph{page} in the sequence refers to a subsequence of consecutive mutations so that each vertex in the mutation grid is mutated at most once. The mutation sequence is as follows.
\begin{itemize}
\item The first page consists of the following mutations 
\begin{align*}
& (a,1), (a,2), \ldots, (a,b), \\
& (a-1,1), (a-1,2), \ldots, (a-1,b), \\
& \vdots \\
& (2,1), (2,2), \ldots, (2,b), \\
& (1,1), (1,2), \ldots, (1,c).
\end{align*}
After performing these mutations the cluster variable with tableaux $\begin{matrix}
1 & d_1+2 \\ \vdots & \vdots \\ d_1+1 & 2d_1+1 \\ n-d_2+d_1+2 \\ \vdots \\ n
\end{matrix}$ appears in position $(1,c)$. 

\item The first page consists of the following mutations
\begin{align*}
& (a,1), (a,2), \ldots, (a,b), \\
& (a-1,1), (a-1,2), \ldots, (a-1,b), \\
& \vdots \\
& (3,1), (3,2), \ldots, (3,b), \\
& (2,1), (2,2), \ldots, (2,c), \\
& (1,1), (1,2), \ldots, (1,c-1).
\end{align*}
The two cluster variables $\begin{matrix}
1 & d_1+3 \\ \vdots & \vdots \\ d_1+2 & 2d_1+2 \\ n-d_2+d_1+3 \\ \vdots \\ n
\end{matrix}$ and $\begin{matrix}
1 & 1 \\ 2 & d_1+3 \\ \vdots & \vdots \\ d_1+1 & 2d_1+1 \\ n-d_2+d_1+2 \\ \vdots \\ n
\end{matrix}$ appear in positions $(2,c)$ and $(1,c-1)$ respectively.

\item Continue this procedure, that is, the $i$th ($i \in [1,a+c-1]$) page consists of mutations at
\begin{align*}
& (a,1), (a,2), \ldots, (a,b), \\
& (a-1,1), (a-1,2), \ldots, (a-1,b), \\
& \vdots \\
& (i+1,1), (i+1,2), \ldots, (i+1,b), \\
& (i,1), (i,2), \ldots, (i,c), \\
& (i-1,1), (i-1,2), \ldots, (i-1,c-1), \\
& \vdots \\
& (1,1), (1,2), \ldots, (1,c-i+1),
\end{align*}
where the last few lines can be empty if $i>c$. Subsequently the cluster variables 
$$
\begin{matrix}
1 & 1 \\ \vdots & \vdots \\ \vdots & j_2 \\ d_1+j_1+1 & d_1+j_1+j_2+2 \\ n-d_2+d_1+j_1+2 & \vdots \\ \vdots & 2d_1+j_1+1 \\  n
\end{matrix}
$$ 
appear in positions $(j_1+1, c-j_2)$, $j_1+j_2=i-1$, $j_1,j_2 \in \ZZ_{\ge 0}$, $j_2 \le c-1$, $i \in [1, a+c-1]$, respectively.
\end{itemize}

%\begin{minipage}{\textwidth}
By direct counting, we have the following.
\begin{lemma} \label{lem:number of mutation steps for 2 step partial flag varieties}
The number of mutations to obtain the initial seed for $\CC[\flag_{d_1,d_2;n}]$ starting from the initial seed for $\CC[\Gr_{d_2;n+d_2-d_1}]$ is $\frac{ac(c+1)}{2}+\frac{ab(a-1)}{2}$, where $a=d_2-d_1-1$, $b=d_2-2$, $c=d_1$.  
\end{lemma}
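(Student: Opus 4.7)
The plan is to count the total number of mutations by summing, over each mutable position $(j,k)$ in the $a \times b$ grid (with $j \in [1,a]$, $k \in [1,b]$), the number of pages on which $(j,k)$ is mutated. I would then verify this equals $\frac{ac(c+1)}{2} + \frac{ab(a-1)}{2}$ by a direct algebraic manipulation.

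First I would determine, for a fixed position $(j,k)$, the set of pages on which it is mutated. From the description of the $i$-th page: the position appears in the top part (the rows from $a$ down to $i+1$, each using all $b$ columns) precisely when $i + 1 \le j$, giving pages $i = 1, 2, \ldots, j - 1$. It appears in the bottom part (where row $j'$ is mutated in columns $1$ through $c - i + j'$) precisely when $j \le i$ and $k \le c - i + j$, which gives pages $i = j, j+1, \ldots, c + j - k$, a range that is nonempty iff $k \le c$. Adding these two contributions I obtain the per-position multiplicity
\[
\mu(j,k) = \begin{cases} j + c - k & \text{if } k \le c, \\ j - 1 & \text{if } c < k \le b. \end{cases}
\]

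Next I would sum these multiplicities:
\[
\sum_{j,k} \mu(j,k) = \sum_{j=1}^a \sum_{k=1}^c (j + c - k) + \sum_{j=1}^a \sum_{k=c+1}^b (j-1).
\]
A routine evaluation gives $\frac{ac(a+c)}{2}$ for the first double sum and $(b-c)\cdot \frac{a(a-1)}{2}$ for the second, and a short rearrangement shows that the combined expression equals $\frac{ac(c+1)}{2} + \frac{ab(a-1)}{2}$, as claimed.

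The main obstacle I anticipate is bookkeeping around the "boundary" pages: when $i > a$ the top part of the $i$-th page is empty, and when $i > c$ the last few rows of the bottom part vanish, as noted in the statement. The position-by-position approach bypasses these subtleties since each individual range of pages on which a given $(j,k)$ is mutated is controlled by the simple inequalities $i \le j-1$ and $j \le i \le c+j-k$, and both automatically respect the overall constraint $i \in [1,a+c-1]$ whenever $j \in [1,a]$ and $k \in [1,b]$.
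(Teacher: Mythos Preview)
Your argument is correct. The per-position multiplicity $\mu(j,k)$ is derived accurately from the page description, the boundary checks (that $i$ always lands in $[1,a+c-1]$) are valid, and the two double sums are evaluated correctly; the final rearrangement
\[
\frac{ac(a+c)}{2}+\frac{(b-c)a(a-1)}{2}
=\frac{a}{2}\bigl(c^2+c+b(a-1)\bigr)
=\frac{ac(c+1)}{2}+\frac{ab(a-1)}{2}
\]
is straightforward.

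Your route differs from the paper's. The paper counts page by page, producing the intermediate expression
\[
a^2c + ac(c-a) - \frac{(a-1)a(c-a)}{2} - \frac{(c-a+1)(c-a)a}{2} + \frac{a(a-1)(b-c)}{2},
\]
which then has to be simplified algebraically. You instead swap the order of summation and count, for each grid position, the number of pages on which it is mutated. This bypasses the case analysis over pages (whether the top block is empty, whether the tail of the bottom block is truncated) and yields a cleaner intermediate expression $\frac{ac(a+c)}{2}+\frac{(b-c)a(a-1)}{2}$. The paper's approach is closer to how the sequence is actually described; yours is more transparent as a counting argument and handles the edge pages without any extra work.
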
    
%\end{minipage}

\begin{proof}
The number is equal to 
\begin{align*}
& a^2c + ac(c-a) - \frac{(a-1)a(c-a)}{2} - \frac{(c-a+1)(c-a)a}{2} + \frac{a(a-1)(b-c)}{2} \\
& = \frac{ac(c+1)}{2}+\frac{ab(a-1)}{2}. 
\end{align*} 
\end{proof}

\begin{figure}
\centering
\adjustbox{scale=0.6}{
\begin{tikzcd}
	\bullet & \bullet & \bullet & \bullet & \bullet & \bullet & \bullet & \bullet & \bullet \\
	\bullet & {(1,1)} & {(1,2)} & \cdots & {(1,c)} & {(1,c+1)} & \cdots & {(1,b)} & \bullet \\
	\bullet & {(2,1)} & {(2,2)} & \cdots & {(2,c)} & {(2,c+1)} & \cdots & {(2,b)} & \bullet \\
	\bullet & \vdots & \vdots & \cdots & \vdots & \vdots & \cdots & \vdots & \bullet \\
	\bullet & {(a,1)} & {(a,2)} & \cdots & {(a,c)} & {(a,c+1)} & \cdots & {(a,b)} & \bullet \\
	\bullet & \bullet & \bullet & \cdots & \bullet & \bullet & \cdots & \bullet & \bullet \\
	\vdots & \vdots & \vdots & \cdots & \vdots & \vdots & \cdots & \vdots & \vdots \\
	\bullet & \bullet & \bullet & \cdots & \bullet & \bullet & \cdots & \bullet & \bullet & \bullet
\end{tikzcd}
}
\caption{The mutation region for a two-step partial flag vareity, where bullets have other cluster variables and frozen variables in the initial seed which we do not mutate.}
\label{fig:mutation region for 2 step partial flag varieties}
\end{figure}

\begin{figure}
\centering
\adjustbox{scale=0.6}{
\begin{tikzcd}
	\bullet & \bullet & \bullet & \cdots & \bullet & \bullet & \cdots & \bullet & \bullet \\
	\bullet & \circ & \circ & \cdots & \circ & \circ & \cdots & \circ & \bullet \\
	\vdots & \vdots & \vdots & \cdots & \vdots & \vdots & \cdots & \vdots & \vdots \\
	\bullet & \circ & \circ & \cdots & \circ & \circ & \cdots & \circ & \bullet \\
	\bullet & \bullet & \bullet & \cdots & \bullet & \bullet & \cdots & \bullet & \bullet \\
	\bullet & \circ & \circ & \cdots & \circ & \circ & \cdots & \circ & \bullet \\
	\vdots & \vdots & \vdots & \cdots & \vdots & \vdots & \cdots & \vdots & \vdots \\
	\bullet & \circ & \circ & \cdots & \circ & \circ & \cdots & \circ & \bullet \\
	\bullet & \bullet & \bullet & \cdots & \bullet & \bullet & \cdots & \bullet & \bullet \\
	\vdots & \vdots & \vdots & \cdots & \vdots & \vdots & \cdots & \vdots & \vdots \\
	\bullet & \bullet & \bullet & \cdots & \bullet & \bullet & \cdots & \bullet & \bullet & \bullet
\end{tikzcd}
}
\caption{The mutation region for a general partial flag vareity, where the rectangular regions of circles are the places we perform the mutations, and bullets have other cluster variables and frozen variables in the initial seed which we do not mutate.}
\label{fig:mutation region for general partial flag varieties}
\end{figure}

\subsection{General partial flag varieties}
We describe the mutation sequence for $\flag_{d_1,\ldots,d_k;n}$. In each region: $(d_j-d_{j-1}-1) \times (d_k-2)$ rectangle with left upper corner at the $(d_{j-1}-d_1+2)$nd row, the second column in the initial seed, $j \in [2,k]$, perform the same mutation sequence as in the two-step partial flag varieties in Section \ref{subsec:two-step partial flag varieties}.
%\textcolor{red}{I think here it would be good to have the notion of mutation grid already introduced, we can adapt it to the general case now, and also refer to Figure 7}. 
The new cluster variables which belong to the partial flag variety appears at rectangles $(d_j-d_{j-1}-1) \times d_{j-1}$, with the left upper corner at the $(d_{j-1}-d_1+2)^{\text{th}}$ row, the second column in the initial seed, $j \in [2,k]$. 

Denote the seed obtained as $s'$ with quiver $Q'$. The above exposition shows
\begin{lemma}\label{lem:embed variables}
The images of the initial cluster variables for $\flag_{d_1,\ldots,d_k;n}$ are contained in the set of cluster variables of $s'$. 
\end{lemma}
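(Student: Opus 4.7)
The plan is to establish Lemma~\ref{lem:embed variables} by tracking the cluster variables that appear at each position throughout the explicit mutation sequence described in the preceding paragraphs, and matching them with the images $\varphi^*(\Delta_{I})$ of the initial minors as computed in Corollary~\ref{cor:image of initial tableaux}. Since the mutation sequence is built from independent blocks --- one block per index $j \in [2,k]$, operating on the $(d_j-d_{j-1}-1) \times (d_k-2)$ rectangle described above --- the argument naturally reduces to two things: (i) verifying the two-step building block, and (ii) justifying that the blocks for different $j$'s interact correctly (in fact, do not interfere).

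For the two-step building block I would proceed by a double induction over the page index $i$ and, within a page, over the coordinates $(r,s)$ at which we mutate. The base case is the initial rectangular seed of $\Gr_{d_2;n+d_2-d_1}$ displayed in Figure~\ref{fig:initial seed for Grkn}, where every vertex is labelled by an interval Plücker coordinate. At each mutation step, the exchange relation \eqref{eq:char mut} becomes an honest three-term Plücker/Grassmann--Plücker relation among the cluster variables around the mutated vertex; because of the stated claim on which tableau appears at position $(r,s)$ after page $i$, one can read off the neighbouring tableaux from the previous page and compute the new tableau using \eqref{eq:mutation tableaux}. The predicted form of the tableau at $(j_1+1, c-j_2)$ after page $j_1+j_2+1$ precisely matches the tableau $\phi(T_{\Delta_{[i_j,d_j]\cup[i_{j+1},d_{j+1}]}})$ from Corollary~\ref{cor:image of initial tableaux} once one identifies $j_1,j_2$ with the appropriate shifts of $i_j,i_{j+1}$. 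Thus induction both produces the cluster variable and certifies that it equals $\varphi^*(\Delta_{[i_j,d_j]\cup[i_{j+1},d_{j+1}]})$.

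For the general $k$-step case, the key observation is that the rectangular regions used for different $j$ are disjoint, and the vertices that lie on their common boundary rows are never mutated during our sequence. Consequently the mutations for one block have no effect on the cluster variables living inside another block; one can simply concatenate the two-step analyses block by block. The cluster variables inherited from the original initial seed of $\Gr_{d_k;N}$ that correspond to the images of the frozen minors of type (a), namely $\varphi^*(\Delta_{[i_j,d_j]}) = P_{[1,i_j-1]\cup [n-d_j+i_j,n]\cup[n+1,n+d_k-d_j]}$ by \eqref{eq:Plücker minor}, are already Plücker coordinates and match the frozen vertices in the initial rectangular seed, so no mutation is required for them.

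The main obstacle I anticipate is bookkeeping: verifying that the tableau appearing at position $(j_1+1,c-j_2)$ after page $j_1+j_2+1$ has exactly the predicted two-column shape, and that the exchange relation used at each mutation step is indeed the expected Grassmann--Plücker relation rather than a longer sum. The cleanest way to deal with this is to phrase the induction in the language of tableaux via \eqref{eq:mutation tableaux}, using the dominance order on tableaux from \S\ref{sec:partial order} to identify the maximum of $\cup_{i\to r} T_i$ and $\cup_{r\to j} T_j$ --- the fact that one of the two is always rectangular (inherited from the previous page) and the other is the predicted two-column tableau shifted by one entry makes the comparison explicit. Once this combinatorial bookkeeping is in place, the lemma follows.
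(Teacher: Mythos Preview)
Your proposal is correct and follows essentially the same approach as the paper. The paper's own proof of this lemma is literally the single sentence ``The above exposition shows'' --- it relies entirely on the claims made in \S\ref{subsec:two-step partial flag varieties} about which tableau appears at which position after each page of the mutation sequence, without further justification. What you propose is precisely the inductive verification of those claims via \eqref{eq:mutation tableaux}, together with the observations that the blocks for different $j$ are separated by unmutated rows and that the type~(a) minors are already initial Pl\"ucker coordinates; this is exactly the content that the paper's exposition asserts and that the lemma summarises.
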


{\bf The final quiver.} Recall that the desired tableaux are sitting at the vertices of each grid of size $(d_j-d_{j-1}-1) \times d_{j-1}$ with left corner at the $(d_{j-1}-d_1+2)^{\text{th}}$ row and second column. We freeze the cluster variables at positions $(i, d_{j-1}+1)$ in $Q'$, $i \in [d_{j-1}-d_{1}+2, d_j-d_1]$, $j \in [2,k]$. Moreover, we freeze the cluster variables 
\begin{align*}
P_{[1,d_j] \cup [n+1, n+d_k-d_j]}, \quad j \in [1,k-1]. 
\end{align*}
They correspond to frozen variables $P_{[1,d_j]}$ for $\flag_{d_1,\ldots,d_k;n}$. The obtained quiver splits into two mutable subquivers that are connected only through the newly frozen vertices, see for example, Figure~\ref{fig:seed of Gr917 after mutation sequence with labels}, where we have rotated the grid with positions 39, 47, 55, 63 and the grid with positions 20, 21, 28, 29, 36, 37, 44, 45, 52, 53, 60, 61, by $180^{\circ}$ around their centers.

\begin{lemma}
The seed $(Q_{d_1,\ldots,d_k;n},\varphi^*({\bf x}_{d_1,\ldots,d_k;n}))$ is a restricted seed of $s'$. In particular, $\varphi^*$ is an embedding of algebras.
\end{lemma}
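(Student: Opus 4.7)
The plan is to verify the conditions of Definition~\ref{def:restricted seed} with respect to an explicit partition $I\cup J$ of the vertex set of $Q'$. First I would describe $I$ as the union of three types of vertices: (i) the mutable vertices that lie inside the rectangular subregions $(d_j-d_{j-1}-1)\times d_{j-1}$ with upper-left corner in the $(d_{j-1}-d_1+2)^{\text{th}}$ row and second column for $j\in[2,k]$, which by Lemma~\ref{lem:embed variables} carry exactly the images under $\varphi^*$ of the mutable initial cluster variables of $\flag_{d_1,\ldots,d_k;n}$; (ii) the vertices at positions $(i,d_{j-1}+1)$ for $i\in[d_{j-1}-d_1+2,d_j-d_1]$ and $j\in[2,k]$, which become the extra frozen vertices $v_{d_j}$; and (iii) those frozen vertices of $Q'$ carrying the cluster variables $P_{[1,d_j]\cup[n+1,n+d_k-d_j]}$ for $j\in[1,k-1]$, plus the original Grassmannian frozens of the form $P_{[1,d_k]}$-type that map to frozens of the flag variety. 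The complement $J$ consists of all remaining vertices of $Q'$.

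Next I would match cluster variables: vertices of type (i) give the mutable initial cluster variables of $\flag_{d_1,\ldots,d_k;n}$ by the explicit description in \S\ref{subsec:two-step partial flag varieties} together with Corollary~\ref{cor:image of initial tableaux}; vertices of type (ii) and (iii) supply the frozen Plücker coordinates $P_{[d_j]}$ of the flag variety under the identification $P_{[1,d_j]\cup[n+1,n+d_k-d_j]}\mapsto P_{[1,d_j]}$ imposed by $\varphi^*$. In this way the cluster carried by $I$ is precisely $\varphi^*(\mathbf{x}_{d_1,\ldots,d_k;n})$.

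Then I would compare the full subquiver of $Q'$ on $I$ with $Q_{d_1,\ldots,d_k;n}$. Inside each rectangular block the mutation sequence of \S\ref{subsec:two-step partial flag varieties} produces exactly the quiver pattern of a two-step flag variety (by construction, since within each block the recipe of Section~\ref{subsec:two-step partial flag varieties} is applied). The arrows to and from the vertices of type (ii) are dictated by the weight-balance condition of Corollary~\ref{cor:generically balanced}, and hence agree with the extra arrows attached to $v_{d_j}$ in Algorithm~2. A direct inspection of the specific pattern (following the figure referenced in the exposition, where rotations of subgrids make the identification apparent) completes the identification.

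The main obstacle is verifying the no-arrows condition of Definition~\ref{def:restricted seed}: that no arrow of $Q'$ connects a mutable vertex of $I$ to any vertex of $J$. For this I would argue that the mutation sequence only touches vertices inside the rectangles, so arrows leaving those rectangles must hit either an original frozen or one of the newly-frozen vertices of type (ii)--(iii). The key observation is that the column of vertices at position $(\cdot,d_{j-1}+1)$, together with the frozen Plücker coordinates $P_{[1,d_j]\cup[n+1,n+d_k-d_j]}$, form a complete separating layer between the flag-variety region and the rest of the Grassmannian quiver; this is exactly what the figure displays and can be checked by tracking the mutation sequence arrow-by-arrow, since a mutation at a vertex $v$ can only introduce new arrows among neighbors of $v$. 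Once the no-arrows condition is established, Lemmata~4.2.2 and 4.2.5 of \cite{FWZ_ch4-5} show that $(Q_{d_1,\ldots,d_k;n},\varphi^*(\mathbf{x}_{d_1,\ldots,d_k;n}))$ is a restricted seed of $s'$, and hence $\varphi^*$ maps the cluster algebra $\kk[\flag_{d_1,\ldots,d_k;n}]$ isomorphically onto a cluster subalgebra of $\kk[\Gr_{d_k;N}]$, giving the desired embedding of algebras (which also recovers Proposition~\ref{proposition:embed}).
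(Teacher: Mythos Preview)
Your overall strategy matches the paper's: verify the partition condition of Definition~\ref{def:restricted seed}, check that the full subquiver on $I$ agrees with $Q_{d_1,\ldots,d_k;n}$, and invoke \cite[\S4.2]{FWZ_ch4-5}. The paper's own proof is terse---it simply asserts that after freezing, the quiver splits into two mutable pieces connected only through the newly frozen vertices, with one piece equal to $Q_{d_1,\ldots,d_k;n}$---so your more explicit outline is a genuine elaboration rather than a different method.

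There is, however, a misidentification in your vertex bookkeeping. The vertices of your type~(ii) (positions $(i,d_{j-1}+1)$ for $i\in[d_{j-1}-d_1+2,d_j-d_1]$) do \emph{not} become the extra frozen vertices $v_{d_j}$ of the flag-variety quiver; after the mutation sequence they carry the two-column tableaux corresponding to the frozen initial minors $\Delta_{[i_j,d_j]\cup[i_{j+1},d_{j+1}]}$ along the $y$-axis (cf.\ Figure~\ref{fig:initial seed for F24n-G4n+2}, where the single newly-frozen vertex (7) becomes the frozen two-column tableau of $\flag_{2,4;n}$, not $v_2$ or $v_4$). The $v_{d_j}$ are instead supplied by your type~(iii) vertices $P_{[1,d_j]\cup[n+1,n+d_k-d_j]}$, exactly as the paper states. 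This swap propagates into your quiver comparison: the appeal to Corollary~\ref{cor:generically balanced} for the arrows at type-(ii) vertices is misplaced, since the weight-balance argument governs the arrows at $v_{d_j}$. Once you correct this labeling, your description of $I$ and the separating-layer argument go through as intended and coincide with the paper's assertion that the two mutable subquivers are connected only through the newly frozen vertices.
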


\begin{proof}
After freezing the additional vertices the quiver $Q'$ splits into two mutable subquivers: one is of the same type as $\flag_{d_1,\ldots,d_k;n}$ (including frozens). The two subquivers are connected only through the newly frozen vertices.

By Lemma~\ref{lem:embed variables} the images of initial cluster variables in $\flag_{d_1,\ldots,d_k;n}$ are contained in the seed $s'$. Hence, by \cite[\S4.2]{FWZ_ch4-5} the cluster algebra generated by the restricted seed is a subalgebra in $\kk[\Gr_{d_k;N}]$.
\end{proof}

% We freeze and delete the corresponding cluster variables as the two-step partial flag variety case. Moreover, we freeze the cluster variables 
% \begin{align*}
% [1,d_j] \cup [n+1, n+d_k-d_j], \quad j \in [1,k-1]. 
% \end{align*}
% They correspond to frozen variables $[1,d_j]$ for $\flag_{d_1,\ldots,d_k;n}$. Then we obtain the initial seed of the partial flag variety $\flag_{d_1,\ldots,d_k;n}$.

\subsection{\texorpdfstring{Example: $\flag_{4,6,9;12}$}{Example: flag4,6,9;12}} \label{subsec:example flag 4 6 9 12}

In this subsection, we describe the mutation sequence from the initial seed of $\Gr_{9;17}$ to the initial seed of $\flag_{4,6,9;12}$ explicitly. Computations of mutations can be found in: \url{https://github.com/lijr07/Grassmannian-cluster-algebra-and-pathial-flag-varieties}.

In this case, we have two mutation regions. We use $t_{(i,j)}$ (resp. $s_{(i,j)}$) to denote the positions of the vertices of the first (resp. second) mutation region. 

The initial seed for $\Gr_{9;17}$ we choose is shown in Figure \ref{fig:initial seed for Gr917}.
We label the vertices where we will perform mutations by $t_{(i,j)}$ and $s_{(i,j)}$. The mutation sequence is 
\begin{align*}
& t_{(1,1)}, t_{(1,2)}, t_{(1,3)}, t_{(1,4)}, \quad t_{(1,1)}, t_{(1,2)}, t_{(1,3)}, \quad t_{(1,1)}, t_{(1,2)}, \quad t_{(1,1)}, \\
& s_{(2,1)}, s_{(2,2)}, \ldots, s_{(2,7)}, \quad
 s_{(1,1)}, s_{(1,2)}, \ldots, s_{(1,6)}, \quad
 s_{(2,1)}, s_{(2,2)}, \ldots, s_{(2,6)}, \quad
 s_{(1,1)}, s_{(1,2)}, \ldots, s_{(1,5)}, \\ 
& s_{(2,1)}, s_{(2,2)}, \ldots, s_{(2,5)}, \quad
 s_{(1,1)}, s_{(1,2)}, \ldots, s_{(1,4)}, \quad
 s_{(2,1)}, s_{(2,2)}, s_{(2,3)}, s_{(2,4)}, \quad
 s_{(1,1)}, s_{(1,2)}, s_{(1,3)}, \\
& s_{(2,1)}, s_{(2,2)}, s_{(2,3)}, \quad
 s_{(1,1)}, s_{(1,2)}, \quad 
 s_{(2,1)}, s_{(2,2)}, \quad
 s_{(1,1)}, \quad
 s_{(2,1)}. 
\end{align*}
After mutations, we freeze the cluster variables at positions $t_{(1,4)}$, $\ldots$, $t_{(1,8)}$, $s_{(1,6)}$, $s_{(1,7)}$, $s_{(1,8)}$, $s_{(2,6)}$, $s_{(2,7)}$, $s_{(2,8)}$. We also freeze $[1,6]\cup [13,15]$ (the cluster variable $P_{[1,6]\cup [13,15]}$). Then we obtain the initial seed for $\flag_{4,6,9;12}$ in Figure \ref{fig:initial seed for Fl4_6_9_12} after removing all irrelevant frozen variables. 

To see the initial seed of $\flag_{4,6,9;12}$ as a restricted seed of a seed for $\Gr_{9;17}$, we label the initial seed of $\Gr_{9;17}$ as shown in Figure \ref{fig:initial seed of Gr917 with labels}. After performing the above mutations and freezing cluster variables, we obtain the seed in Figure \ref{fig:seed of Gr917 after mutation sequence with labels}. We see that the initial seed of $\flag_{4,6,9;12}$ is a restricted seed for the seed of $\Gr_{9;17}$ in Figure \ref{fig:initial seed of Gr917 with labels}. 

\begin{figure}
    \centering
\includegraphics[width=0.6\textwidth]{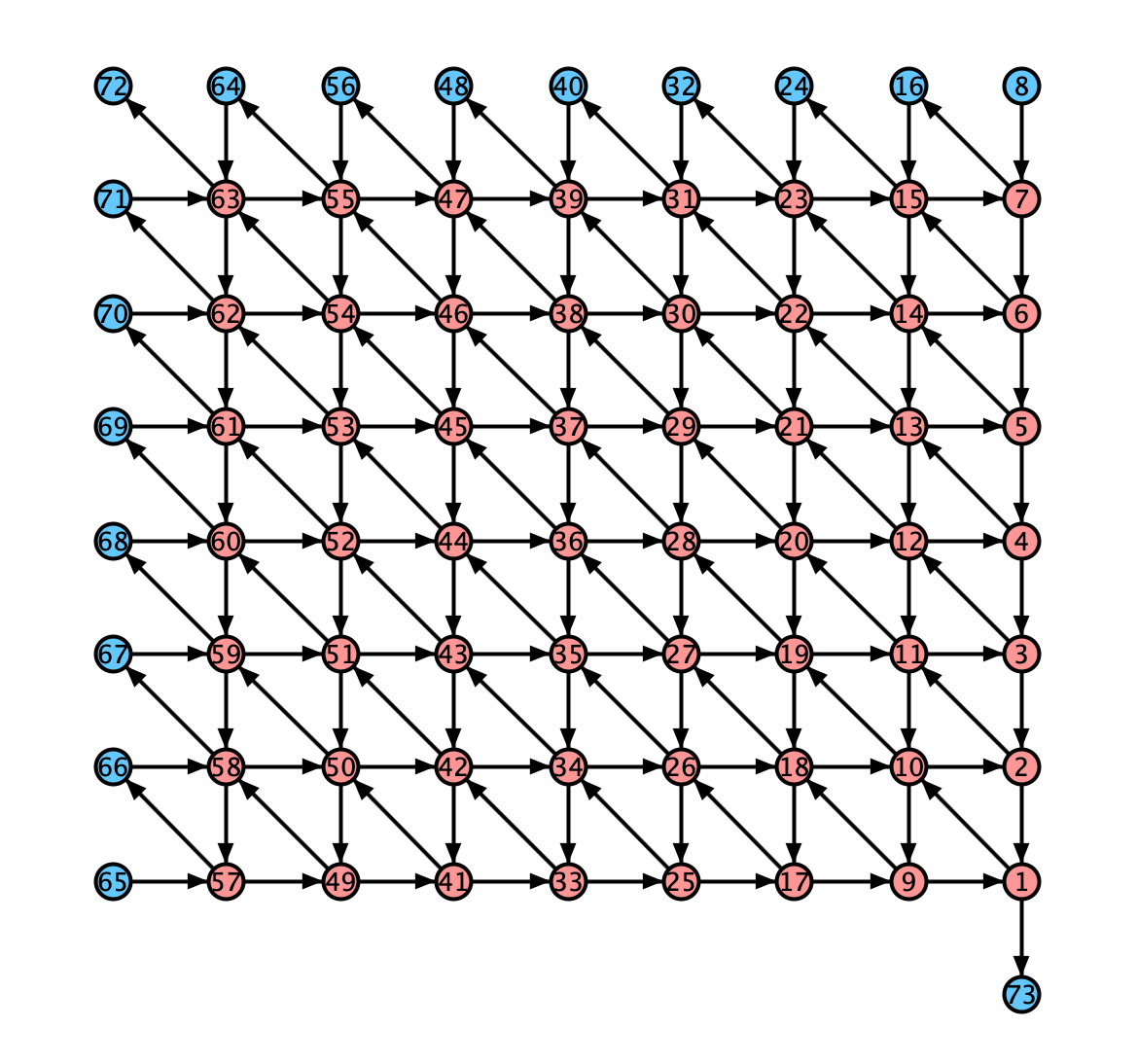} 
    \caption{The initial seed of $\Gr_{9;17}$ with labels.}
    \label{fig:initial seed of Gr917 with labels}
\end{figure}

\begin{figure}
    \centering
\includegraphics[width=0.7\textwidth]{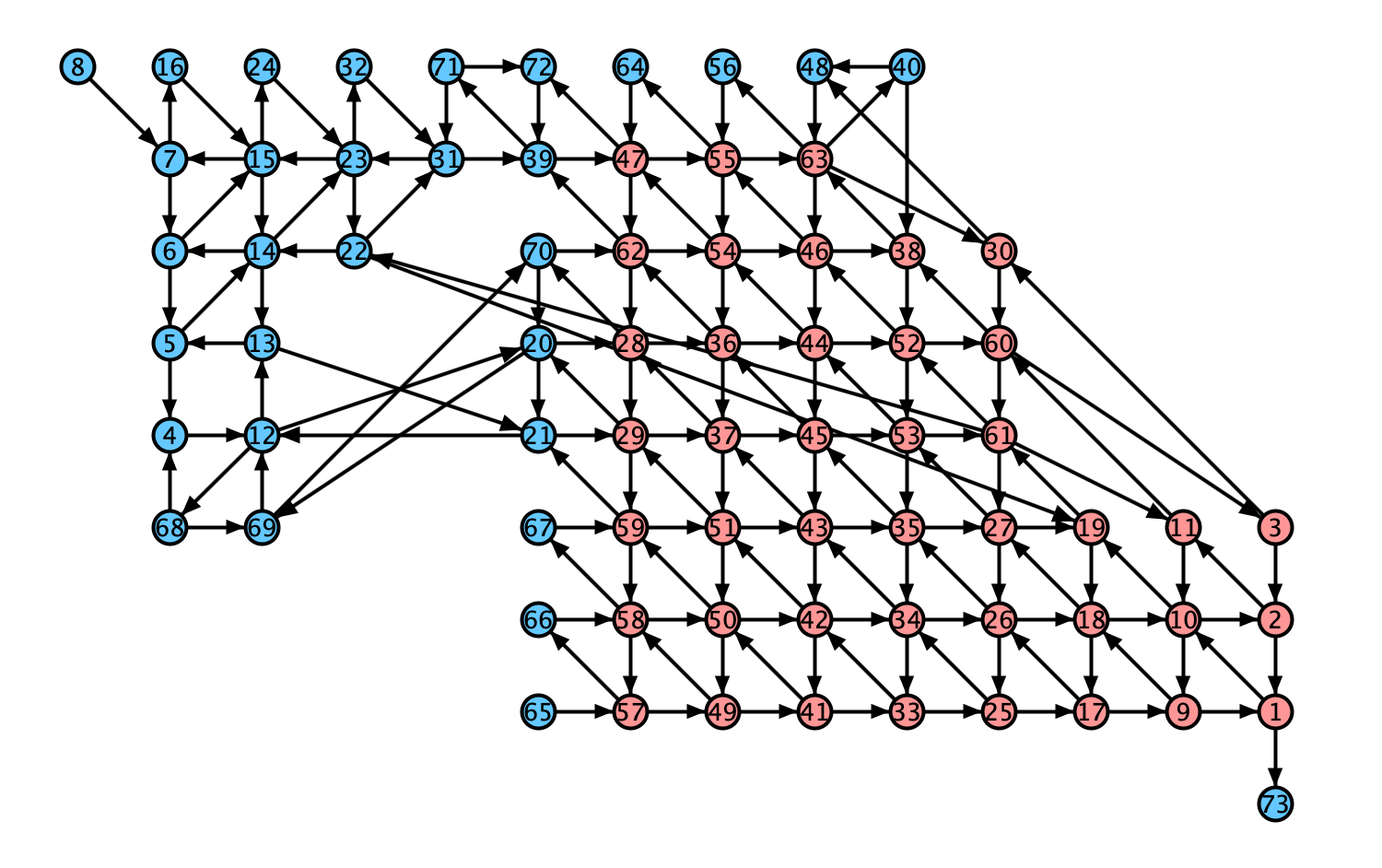} 
    \caption{The seed of $\Gr_{9;17}$ with labels after performing the mutation sequence in \S\ref{subsec:example flag 4 6 9 12}.}
    \label{fig:seed of Gr917 after mutation sequence with labels}
\end{figure}

\begin{figure}
\centering
\adjustbox{scale=0.55}{
 \begin{tikzcd}
	\boxed{[9,17]} & \boxed{[1]\cup [10,17]} & \boxed{[1,2]\cup [11,17]} & \boxed{[1,3]\cup [12,17]} & \boxed{[1,4]\cup [13,17]} & \boxed{[1,5]\cup [14,17]} & \boxed{[1,6]\cup [15,17]} & \boxed{[1,7]\cup [16,17]} & \boxed{[1,8]\cup \{17\}} \\
	\boxed{[8,16]} & \substack{ \textcolor{red}{[1]\cup [9,16]} \\ t_{(1,1)} } & \substack{ \textcolor{red}{[1,2]\cup [10,16]}  \\ t_{(1,2)} } & \substack{ \textcolor{red}{[1,3]\cup [11,16]} \\ t_{(1,3)} } & \substack{ \textcolor{red}{[1,4]\cup [12,16]} \\ t_{(1,4)} } & \substack{ \textcolor{blue}{[1,5]\cup [13,16]} \\ t_{(1,5)} } & \substack{ \textcolor{blue}{[1,6]\cup [14,16]} \\ t_{(1,6)} } & \substack{ \textcolor{blue}{[1,7]\cup [15,16]} \\ t_{(1,7)} } & \substack{  {[1,8]\cup \{16\}} \\ t_{(1,8)} } \\
	\boxed{[7,15]} & {[1]\cup [8,15]} & {[1,2]\cup [9,15]} & {[1,3]\cup [10,15]} & {[1,4]\cup [11,15]} & {[1,5]\cup [12,15]} & {[1,6]\cup [13,15]} & {[1,7]\cup [14,15]} & {[1,8]\cup \{15\}} \\
	\boxed{[6,14]} & \substack{ \textcolor{red}{[1]\cup [7,14]} \\ s_{(1,1)} } & \substack{ \textcolor{red}{[1,2]\cup [8,14]} \\ s_{(1,2)} } & \substack{ \textcolor{red}{[1,3]\cup [9,14]} \\ s_{(1,3)} } & \substack{ \textcolor{red}{[1,4]\cup [10,14]} \\ s_{(1,4)} } & \substack{ \textcolor{red}{[1,5]\cup [11,14]} \\ s_{(1,5)} } & \substack{ \textcolor{red}{[1,6]\cup [12,14]} \\ s_{(1,6)} } & \substack{ \textcolor{blue}{[1,7]\cup [13,14]} \\ s_{(1,7)} } & \substack{ {[1,8]\cup \{14\}} \\ s_{(1,8)} } \\
	\boxed{[5,13]} & \substack{ \textcolor{red}{[1]\cup [6,13]} \\ s_{(2,1)} } & \substack{ \textcolor{red}{[1,2]\cup [7,13]} \\ s_{(2,2)} } & \substack{ \textcolor{red}{[1,3]\cup [8,13]} \\ s_{(2,3)} } & \substack{ \textcolor{red}{[1,4]\cup [9,13]} \\ s_{(2,4)} } & \substack{ \textcolor{red}{[1,5]\cup [10,13]} \\ s_{(2,5)}} & \substack{ \textcolor{red}{[1,6]\cup [11,13]} \\ s_{(2,6)}} & \substack{ \textcolor{blue}{[1,7]\cup [12,13]} \\ s_{(2,7)}} & \substack{ {[1,8]\cup \{13\}} \\ s_{(2,8)} } \\
	\boxed{[4,12]} & {[1]\cup [5,12]} & {[1,2]\cup [6,12]} & {[1,3]\cup [7,12]} & {[1,4]\cup [8,12]} & {[1,5]\cup [9,12]} & {[1,6]\cup [10,12]} & {[1,7]\cup [11,12]} & {[1,8]\cup \{12\}} \\
	\boxed{[3,11]} & {[1]\cup [4,11]} & {[1,2]\cup [5,11]} & {[1,3]\cup [6,11]} & {[1,4]\cup [7,11]} & {[1,5]\cup [8,11]} & {[1,6]\cup [9,11]} & {[1,7]\cup [10,11]} & {[1,8]\cup \{11\}} \\
	\boxed{[2,10]} & {[1]\cup [3,10]} & {[1,2]\cup [4,10]} & {[1,3]\cup [5,10]} & {[1,4]\cup [6,10]} & {[1,5]\cup [7,10]} & {[1,6]\cup [8,10]} & {[1,7]\cup [9,10]} & {[1,8]\cup \{10\}} \\
    & & & & & & & & \boxed{[1,9]}
	\arrow[from=1-2, to=2-2]
	\arrow[from=1-3, to=2-3]
	\arrow[from=1-4, to=2-4]
	\arrow[from=1-5, to=2-5]
	\arrow[from=1-6, to=2-6]
	\arrow[from=1-7, to=2-7]
	\arrow[from=1-8, to=2-8]
	\arrow[from=1-9, to=2-9]
	\arrow[from=2-1, to=2-2]
	\arrow[from=2-2, to=1-1]
	\arrow[from=2-2, to=2-3]
	\arrow[from=2-2, to=3-2]
	\arrow[from=2-3, to=1-2]
	\arrow[from=2-3, to=2-4]
	\arrow[from=2-3, to=3-3]
	\arrow[from=2-4, to=1-3]
	\arrow[from=2-4, to=2-5]
	\arrow[from=2-4, to=3-4]
	\arrow[from=2-5, to=1-4]
	\arrow[from=2-5, to=2-6]
	\arrow[from=2-5, to=3-5]
	\arrow[from=2-6, to=1-5]
	\arrow[from=2-6, to=2-7]
	\arrow[from=2-6, to=3-6]
	\arrow[from=2-7, to=1-6]
	\arrow[from=2-7, to=2-8]
	\arrow[from=2-7, to=3-7]
	\arrow[from=2-8, to=1-7]
	\arrow[from=2-8, to=2-9]
	\arrow[from=2-8, to=3-8]
	\arrow[from=2-9, to=1-8]
	\arrow[from=2-9, to=3-9]
	\arrow[from=3-1, to=3-2]
	\arrow[from=3-2, to=2-1]
	\arrow[from=3-2, to=3-3]
	\arrow[from=3-2, to=4-2]
	\arrow[from=3-3, to=2-2]
	\arrow[from=3-3, to=3-4]
	\arrow[from=3-3, to=4-3]
	\arrow[from=3-4, to=2-3]
	\arrow[from=3-4, to=3-5]
	\arrow[from=3-4, to=4-4]
	\arrow[from=3-5, to=2-4]
	\arrow[from=3-5, to=3-6]
	\arrow[from=3-5, to=4-5]
	\arrow[from=3-6, to=2-5]
	\arrow[from=3-6, to=3-7]
	\arrow[from=3-6, to=4-6]
	\arrow[from=3-7, to=2-6]
	\arrow[from=3-7, to=3-8]
	\arrow[from=3-7, to=4-7]
	\arrow[from=3-8, to=2-7]
	\arrow[from=3-8, to=3-9]
	\arrow[from=3-8, to=4-8]
	\arrow[from=3-9, to=2-8]
	\arrow[from=3-9, to=4-9]
	\arrow[from=4-1, to=4-2]
	\arrow[from=4-2, to=3-1]
	\arrow[from=4-2, to=4-3]
	\arrow[from=4-2, to=5-2]
	\arrow[from=4-3, to=3-2]
	\arrow[from=4-3, to=4-4]
	\arrow[from=4-3, to=5-3]
	\arrow[from=4-4, to=3-3]
	\arrow[from=4-4, to=4-5]
	\arrow[from=4-4, to=5-4]
	\arrow[from=4-5, to=3-4]
	\arrow[from=4-5, to=4-6]
	\arrow[from=4-5, to=5-5]
	\arrow[from=4-6, to=3-5]
	\arrow[from=4-6, to=4-7]
	\arrow[from=4-6, to=5-6]
	\arrow[from=4-7, to=3-6]
	\arrow[from=4-7, to=4-8]
	\arrow[from=4-7, to=5-7]
	\arrow[from=4-8, to=3-7]
	\arrow[from=4-8, to=4-9]
	\arrow[from=4-8, to=5-8]
	\arrow[from=4-9, to=3-8]
	\arrow[from=4-9, to=5-9]
	\arrow[from=5-1, to=5-2]
	\arrow[from=5-2, to=5-3]
	\arrow[from=5-2, to=6-2]
	\arrow[from=5-3, to=4-2]
	\arrow[from=5-3, to=5-4]
	\arrow[from=5-3, to=6-3]
	\arrow[from=5-4, to=4-3]
	\arrow[from=5-4, to=5-5]
	\arrow[from=5-4, to=6-4]
	\arrow[from=5-5, to=4-4]
	\arrow[from=5-5, to=5-6]
	\arrow[from=5-5, to=6-5]
	\arrow[from=5-6, to=4-5]
	\arrow[from=5-6, to=5-7]
	\arrow[from=5-6, to=6-6]
	\arrow[from=5-7, to=4-6]
	\arrow[from=5-7, to=5-8]
	\arrow[from=5-7, to=6-7]
	\arrow[from=5-8, to=4-7]
	\arrow[from=5-8, to=5-9]
	\arrow[from=5-8, to=6-8]
	\arrow[from=5-9, to=4-8]
	\arrow[from=5-9, to=6-9]
	\arrow[from=6-1, to=6-2]
	\arrow[from=6-2, to=5-1]
	\arrow[from=6-2, to=6-3]
	\arrow[from=6-2, to=7-2]
	\arrow[from=6-3, to=5-2]
	\arrow[from=6-3, to=6-4]
	\arrow[from=6-3, to=7-3]
	\arrow[from=6-4, to=5-3]
	\arrow[from=6-4, to=6-5]
	\arrow[from=6-4, to=7-4]
	\arrow[from=6-5, to=5-4]
	\arrow[from=6-5, to=6-6]
	\arrow[from=6-5, to=7-5]
	\arrow[from=6-6, to=5-5]
	\arrow[from=6-6, to=6-7]
	\arrow[from=6-6, to=7-6]
	\arrow[from=6-7, to=5-6]
	\arrow[from=6-7, to=6-8]
	\arrow[from=6-7, to=7-7]
	\arrow[from=6-8, to=5-7]
	\arrow[from=6-8, to=6-9]
	\arrow[from=6-8, to=7-8]
	\arrow[from=6-9, to=5-8]
	\arrow[from=6-9, to=7-9]
	\arrow[from=7-1, to=7-2]
	\arrow[from=7-2, to=6-1]
	\arrow[from=7-2, to=7-3]
	\arrow[from=7-2, to=8-2]
	\arrow[from=7-3, to=6-2]
	\arrow[from=7-3, to=7-4]
	\arrow[from=7-3, to=8-3]
	\arrow[from=7-4, to=6-3]
	\arrow[from=7-4, to=7-5]
	\arrow[from=7-4, to=8-4]
	\arrow[from=7-5, to=6-4]
	\arrow[from=7-5, to=7-6]
	\arrow[from=7-5, to=8-5]
	\arrow[from=7-6, to=6-5]
	\arrow[from=7-6, to=7-7]
	\arrow[from=7-6, to=8-6]
	\arrow[from=7-7, to=6-6]
	\arrow[from=7-7, to=7-8]
	\arrow[from=7-7, to=8-7]
	\arrow[from=7-8, to=6-7]
	\arrow[from=7-8, to=7-9]
	\arrow[from=7-8, to=8-8]
	\arrow[from=7-9, to=6-8]
	\arrow[from=7-9, to=8-9]
	\arrow[from=8-1, to=8-2]
	\arrow[from=8-2, to=7-1]
	\arrow[from=8-2, to=8-3]
	\arrow[from=8-3, to=7-2]
	\arrow[from=8-3, to=8-4]
	\arrow[from=8-4, to=7-3]
	\arrow[from=8-4, to=8-5]
	\arrow[from=8-5, to=7-4]
	\arrow[from=8-5, to=8-6]
	\arrow[from=8-6, to=7-5]
	\arrow[from=8-6, to=8-7]
	\arrow[from=8-7, to=7-6]
	\arrow[from=8-7, to=8-8]
	\arrow[from=8-8, to=7-7]
	\arrow[from=8-8, to=8-9]
	\arrow[from=8-9, to=7-8]
        \arrow[from=8-9, to=9-9] 
\end{tikzcd}
}
\caption{The initial seed for $\kk[\Gr_{9;17}]$.}
\label{fig:initial seed for Gr917}
\end{figure}

\begin{figure}
\centering
\adjustbox{scale=0.38}{
\begin{tikzcd}
	12 &&&&&&& {} \\
	&&&&&& \begin{array}{c} \boxed{\begin{matrix} 1 \\ 2 \\ 3 \\12 \end{matrix}} \end{array} \\
	11 &&&&& {} \\
	&&&& \begin{array}{c} \boxed{\begin{matrix} 1 \\ 2 \\ 11 \\12 \end{matrix}} \end{array} \\
	10 &&& {} \\
	&& \begin{array}{c} \boxed{\begin{matrix} 1 \\ 10 \\ 11 \\12 \end{matrix}} \end{array} \\
	9 & {} \\
	\begin{array}{c} \boxed{\begin{matrix} 9 \\ 10 \\ 11 \\12 \end{matrix}} \end{array} &&&&&&&& \begin{array}{c} \boxed{\begin{matrix} 1 \\ 2 \\ 3 \\4 \end{matrix}} \end{array} \\
	8 &&&&&&&&&&& {} \\
	\begin{array}{c} \boxed{\textcolor{red}{\begin{matrix} 1&8 \\ 2&9 \\ 3&10 \\4&11 \\ 5 \\ 12 \end{matrix}}} \end{array} && \begin{array}{c} \textcolor{red}{\begin{matrix} 1&1 \\ 2&9 \\ 3&10 \\4&11 \\ 5 \\ 12 \end{matrix}} \end{array} && \begin{array}{c} \textcolor{red}{\begin{matrix} 1&1 \\ 2&2 \\ 3&10 \\4&11 \\ 5 \\ 12 \end{matrix}} \end{array} && \begin{array}{c} \textcolor{red}{\begin{matrix} 1&1 \\ 2&2 \\ 3&3 \\4&11 \\ 5 \\ 12 \end{matrix}} \end{array} &&&& \begin{array}{c} \begin{matrix} 1 \\ 2 \\ 3 \\4 \\ 5 \\ 12 \end{matrix} \end{array} \\
	7 &&&&&&&&& {} \\
	\begin{array}{c} \boxed{\begin{matrix} 7 \\ 8 \\ 9 \\10 \\ 11 \\ 12 \end{matrix}} \end{array} && \begin{array}{c} \begin{matrix} 1 \\ 8 \\ 9 \\10 \\ 11 \\ 12 \end{matrix} \end{array} && \begin{array}{c} \begin{matrix} 1 \\ 2 \\ 9 \\10 \\ 11 \\ 12 \end{matrix} \end{array} && \begin{array}{c} \begin{matrix} 1 \\ 2 \\ 3 \\10 \\ 11 \\ 12 \end{matrix} \end{array} && \begin{array}{c} \begin{matrix} 1 \\ 2 \\ 3 \\4 \\ 11 \\ 12 \end{matrix} \end{array} &&&& \begin{array}{c} \boxed{\begin{matrix} 1 \\ \vdots \\ 6 \end{matrix}} \end{array} \\
	6 &&&&&&&&&&&&&&&&& {} \\
	\begin{array}{c} \boxed{\textcolor{red}{\begin{matrix} 1&6 \\ \vdots & 7 \\ \vdots & 8 \\ \vdots &  \vdots \\ \vdots & 11 \\ 8 \\ 12 \end{matrix}}} \end{array} && \begin{array}{c} \textcolor{red}{\begin{matrix} 1&1 \\ \vdots & 7 \\ \vdots & 8 \\ \vdots &  \vdots \\ \vdots & 11 \\ 8 \\ 12 \end{matrix}} \end{array} && \begin{array}{c} \textcolor{red}{\begin{matrix} 1&1 \\ \vdots & 2 \\ \vdots & 8 \\ \vdots &  \vdots \\ \vdots & 11 \\ 8 \\ 12 \end{matrix}} \end{array} && \begin{array}{c} \textcolor{red}{\begin{matrix} 1&1 \\ \vdots & 2 \\ \vdots & 3 \\ \vdots &  9 \\ \vdots & 10 \\ 8 & 11 \\ 12 \end{matrix}} \end{array} && \begin{array}{c} \textcolor{red}{\begin{matrix} 1&1 \\ \vdots & \vdots \\ \vdots & 4 \\ \vdots & 10 \\ \vdots & 11 \\ 8 \\ 12 \end{matrix}} \end{array} && \begin{array}{c} \textcolor{red}{\begin{matrix} 1&1 \\ \vdots & \vdots \\ \vdots & 4 \\ \vdots & 5 \\ \vdots & 11 \\ 8 \\ 12 \end{matrix}} \end{array} &&&&&& \begin{array}{c} \begin{matrix} 1 \\ 2 \\ \vdots \\ 6 \\ 7 \\ 8 \\ 12 \end{matrix} \end{array} \\
	5 &&&&&&&&&&&&&&& {} \\
	\begin{array}{c} \boxed{\textcolor{red}{\begin{matrix} 1&5 \\ \vdots & 6 \\ \vdots & 7 \\ \vdots &  \vdots \\ 7 & 10 \\ 11 \\ 12 \end{matrix}}} \end{array} && \begin{array}{c} \textcolor{red}{\begin{matrix} 1&1 \\ \vdots & 6 \\ \vdots & 7 \\ \vdots &  \vdots \\ 7 & 10 \\ 11 \\ 12 \end{matrix}} \end{array} && \begin{array}{c} \textcolor{red}{\begin{matrix} 1&1 \\ \vdots & 2 \\ \vdots & 7 \\ \vdots &  \vdots \\ 7 & 10 \\ 11 \\ 12 \end{matrix}} \end{array} && \begin{array}{c} \textcolor{red}{\begin{matrix} 1&1 \\ \vdots & 2 \\ \vdots & 3 \\ \vdots &  8 \\ 7 & 9 \\ 11 & 10 \\ 12 \end{matrix}} \end{array} && \begin{array}{c} \textcolor{red}{\begin{matrix} 1&1 \\ \vdots & \vdots \\ \vdots & 4 \\ \vdots & 9 \\ 7 & 10 \\ 11 \\ 12 \end{matrix}} \end{array} && \begin{array}{c} \textcolor{red}{\begin{matrix} 1&1 \\ \vdots & \vdots \\ \vdots & 4 \\ \vdots & 5 \\ 7 & 10 \\ 11 \\ 12 \end{matrix}} \end{array} &&&& \begin{array}{c} \begin{matrix} 1 \\ 2 \\ \vdots \\ 7 \\ 11 \\ 12 \end{matrix} \end{array} \\
	4 &&&&&&&&&&&&& {} \\
	\begin{array}{c} \boxed{\begin{matrix} 4 \\ 5 \\ 6 \\ \vdots \\ 11 \\12 \end{matrix}} \end{array} && \begin{array}{c} \begin{matrix} 1 \\ 5 \\ 6 \\ 7 \\ \vdots \\ 12 \end{matrix} \end{array} && \begin{array}{c} \begin{matrix} 1 \\ 2 \\ 6 \\ 7 \\ \vdots \\ 12 \end{matrix} \end{array} && \begin{array}{c} \begin{matrix} 1 \\ \vdots \\ 3 \\ 7 \\ \vdots \\ 12 \end{matrix} \end{array} && \begin{array}{c} \begin{matrix} 1 \\ \vdots \\ 4 \\ 8 \\ \vdots \\ 12 \end{matrix} \end{array} && \begin{array}{c} \begin{matrix} 1 \\ \vdots \\ 5 \\ 9 \\ \vdots \\ 12 \end{matrix} \end{array} && \begin{array}{c} \begin{matrix} 1 \\ \vdots \\ 6 \\ 10 \\ 11 \\ 12 \end{matrix} \end{array} \\
	3 &&&&&&&&&&&&&&&&&&&&&&& {} \\
	\begin{array}{c} \boxed{\begin{matrix} 3 \\ 4 \\ 5 \\ \vdots \\ 10 \\11 \end{matrix}} \end{array} && \begin{array}{c} \begin{matrix} 1 \\ 4 \\ 5 \\ \vdots \\ 10 \\11 \end{matrix} \end{array} && \begin{array}{c} \begin{matrix} 1 \\ 2 \\ 5 \\ \vdots \\ 10 \\11 \end{matrix} \end{array} && \begin{array}{c} \begin{matrix} 1 \\ 2 \\ 3 \\ 6 \\ \vdots \\11 \end{matrix} \end{array} && \begin{array}{c} \begin{matrix} 1 \\ \vdots \\ 4 \\ 7 \\ \vdots \\11 \end{matrix} \end{array} && \begin{array}{c} \begin{matrix} 1 \\ \vdots \\ 5 \\ 8 \\ \vdots \\11 \end{matrix} \end{array} && \begin{array}{c} \begin{matrix} 1 \\ \vdots \\ 6 \\ 9 \\ 10 \\11 \end{matrix} \end{array} && \begin{array}{c} \begin{matrix} 1 \\ 2 \\ \vdots \\ 7 \\ 10 \\11 \end{matrix} \end{array} && \begin{array}{c} \begin{matrix} 1 \\ 2 \\ \vdots \\ 7 \\ 8 \\11 \end{matrix} \end{array} \\
	2 &&&&&&&&&&&&&&&&&&&&& {} \\
	\begin{array}{c} \boxed{\begin{matrix} 2 \\ 3 \\ 4 \\ \vdots \\ 9 \\10 \end{matrix}} \end{array} && \begin{array}{c} \begin{matrix} 1 \\ 3 \\ 4 \\ \vdots \\ 9 \\10 \end{matrix} \end{array} && \begin{array}{c} \begin{matrix} 1 \\ 2 \\ 4 \\ \vdots \\ 9 \\10 \end{matrix} \end{array} && \begin{array}{c} \begin{matrix} 1 \\ 2 \\ 3 \\ 5 \\ \vdots \\10 \end{matrix} \end{array} && \begin{array}{c} \begin{matrix} 1 \\ \vdots \\ 4 \\ 6 \\ \vdots \\10 \end{matrix} \end{array} && \begin{array}{c} \begin{matrix} 1 \\ \vdots \\ 5 \\ 7 \\ \vdots \\10 \end{matrix} \end{array} && \begin{array}{c} \begin{matrix} 1 \\ \vdots \\ 6 \\ 8 \\ 9 \\10 \end{matrix} \end{array} && \begin{array}{c} \begin{matrix} 1 \\ 2 \\ \vdots \\ 7 \\ 9 \\10 \end{matrix} \end{array} && \begin{array}{c} \begin{matrix} 1 \\ 2 \\ \vdots \\ 7 \\ 8 \\10 \end{matrix} \end{array} &&&& \begin{array}{c} \boxed{\begin{matrix} 1 \\ 2 \\ 3 \\ \vdots \\ 8 \\9 \end{matrix}} \end{array} \\
	1 &&&&&&&&&&&&&&&&&&& {} \\
	\\
	& 1 && 2 && 3 && 4 && 5 && 6 && 7 && 8 && 9 && 10 && 11 && 12
	\arrow[no head, from=1-1, to=1-8, teal, rounded corners, to path=-| (\tikztotarget)]
	\arrow[no head, from=1-8, to=25-8, teal, rounded corners, to path=-| (\tikztotarget)]
	\arrow[from=2-7, to=10-7]
	\arrow[no head, from=3-1, to=3-6, teal, rounded corners, to path=-| (\tikztotarget)]
	\arrow[no head, from=3-6, to=25-6, teal, rounded corners, to path=-| (\tikztotarget)]
	\arrow[from=4-5, to=10-5]
	\arrow[no head, from=5-1, to=5-4, teal, rounded corners, to path=-| (\tikztotarget)]
	\arrow[no head, from=5-4, to=25-4, teal, rounded corners, to path=-| (\tikztotarget)]
	\arrow[from=6-3, to=10-3]
	\arrow[no head, from=7-1, to=7-2, teal, rounded corners, to path=-| (\tikztotarget)]
	\arrow[no head, from=7-2, to=25-2, teal, rounded corners, to path=-| (\tikztotarget)]
	\arrow[from=8-9, to=12-9]
	\arrow[no head, from=9-1, to=9-12, teal, rounded corners, to path=-| (\tikztotarget)]
	\arrow[no head, from=9-12, to=25-12, teal, rounded corners, to path=-| (\tikztotarget)]
	\arrow[from=10-1, to=10-3]
	\arrow[from=10-3, to=8-1]
	\arrow[from=10-3, to=10-5]
	\arrow[from=10-3, to=12-3]
	\arrow[from=10-5, to=6-3]
	\arrow[from=10-5, to=10-7]
	\arrow[from=10-5, to=12-5]
	\arrow[from=10-7, to=4-5]
	\arrow[from=10-7, to=8-9]
	\arrow[from=10-7, to=10-11]
	\arrow[from=10-7, to=12-7]
	\arrow[from=10-11, to=2-7]
	\arrow[from=10-11, to=14-11]
	\arrow[no head, from=11-1, to=11-10, teal, rounded corners, to path=-| (\tikztotarget)]
	\arrow[no head, from=11-10, to=25-10, teal, rounded corners, to path=-| (\tikztotarget)]
	\arrow[from=12-1, to=12-3]
	\arrow[from=12-3, to=10-1]
	\arrow[from=12-3, to=12-5]
	\arrow[from=12-3, to=14-3]
	\arrow[from=12-5, to=10-3]
	\arrow[from=12-5, to=12-7]
	\arrow[from=12-5, to=14-5]
	\arrow[from=12-7, to=10-5]
	\arrow[from=12-7, to=12-9]
	\arrow[from=12-7, to=14-7]
	\arrow[from=12-9, to=10-7]
	\arrow[from=12-9, to=14-9]
	\arrow[from=12-13, to=18-13]
	\arrow[no head, from=13-1, to=13-18, teal, rounded corners, to path=-| (\tikztotarget)]
	\arrow[no head, from=13-18, to=25-18, teal, rounded corners, to path=-| (\tikztotarget)]
	\arrow[from=14-1, to=14-3]
	\arrow[from=14-3, to=12-1]
	\arrow[from=14-3, to=14-5]
	\arrow[from=14-3, to=16-3]
	\arrow[from=14-5, to=12-3]
	\arrow[from=14-5, to=14-7]
	\arrow[from=14-5, to=16-5]
	\arrow[from=14-7, to=12-5]
	\arrow[from=14-7, to=14-9]
	\arrow[from=14-7, to=16-7]
	\arrow[from=14-9, to=12-7]
	\arrow[from=14-9, to=14-11]
	\arrow[from=14-9, to=16-9]
	\arrow[from=14-11, to=12-9]
	\arrow[from=14-11, to=14-17]
	\arrow[from=14-11, to=16-11]
	\arrow[from=14-17, to=10-11]
	\arrow[from=14-17, to=20-17]
	\arrow[no head, from=15-1, to=15-16, teal, rounded corners, to path=-| (\tikztotarget)]
	\arrow[no head, from=15-16, to=25-16, teal, rounded corners, to path=-| (\tikztotarget)]
	\arrow[from=16-1, to=16-3]
	\arrow[from=16-3, to=16-5]
	\arrow[from=16-3, to=18-3]
	\arrow[from=16-5, to=14-3]
	\arrow[from=16-5, to=16-7]
	\arrow[from=16-5, to=18-5]
	\arrow[from=16-7, to=14-5]
	\arrow[from=16-7, to=16-9]
	\arrow[from=16-7, to=18-7]
	\arrow[from=16-9, to=14-7]
	\arrow[from=16-9, to=16-11]
	\arrow[from=16-9, to=18-9]
	\arrow[from=16-11, to=12-13]
	\arrow[from=16-11, to=14-9]
	\arrow[from=16-11, to=16-15]
	\arrow[from=16-11, to=18-11]
	\arrow[from=16-15, to=14-11]
	\arrow[from=16-15, to=20-15]
	\arrow[no head, from=17-1, to=17-14, teal, rounded corners, to path=-| (\tikztotarget)]
	\arrow[no head, from=17-14, to=25-14, teal, rounded corners, to path=-| (\tikztotarget)]
	\arrow[from=18-1, to=18-3]
	\arrow[from=18-3, to=16-1]
	\arrow[from=18-3, to=18-5]
	\arrow[from=18-3, to=20-3]
	\arrow[from=18-5, to=16-3]
	\arrow[from=18-5, to=18-7]
	\arrow[from=18-5, to=20-5]
	\arrow[from=18-7, to=16-5]
	\arrow[from=18-7, to=18-9]
	\arrow[from=18-7, to=20-7]
	\arrow[from=18-9, to=16-7]
	\arrow[from=18-9, to=18-11]
	\arrow[from=18-9, to=20-9]
	\arrow[from=18-11, to=16-9]
	\arrow[from=18-11, to=18-13]
	\arrow[from=18-11, to=20-11]
	\arrow[from=18-13, to=16-11]
	\arrow[from=18-13, to=20-13]
	\arrow[no head, from=19-1, to=19-24, teal, rounded corners, to path=-| (\tikztotarget)]
	\arrow[no head, from=19-24, to=25-24, teal, rounded corners, to path=-| (\tikztotarget)]
	\arrow[from=20-1, to=20-3]
	\arrow[from=20-3, to=18-1]
	\arrow[from=20-3, to=20-5]
	\arrow[from=20-3, to=22-3]
	\arrow[from=20-5, to=18-3]
	\arrow[from=20-5, to=20-7]
	\arrow[from=20-5, to=22-5]
	\arrow[from=20-7, to=18-5]
	\arrow[from=20-7, to=20-9]
	\arrow[from=20-7, to=22-7]
	\arrow[from=20-9, to=18-7]
	\arrow[from=20-9, to=20-11]
	\arrow[from=20-9, to=22-9]
	\arrow[from=20-11, to=18-9]
	\arrow[from=20-11, to=20-13]
	\arrow[from=20-11, to=22-11]
	\arrow[from=20-13, to=18-11]
	\arrow[from=20-13, to=20-15]
	\arrow[from=20-13, to=22-13]
	\arrow[from=20-15, to=18-13]
	\arrow[from=20-15, to=20-17]
	\arrow[from=20-15, to=22-15]
	\arrow[from=20-17, to=16-15]
	\arrow[from=20-17, to=22-17]
	\arrow[no head, from=21-1, to=21-22, teal, rounded corners, to path=-| (\tikztotarget)]
	\arrow[no head, from=21-22, to=25-22, teal, rounded corners, to path=-| (\tikztotarget)]
	\arrow[from=22-1, to=22-3]
	\arrow[from=22-3, to=20-1]
	\arrow[from=22-3, to=22-5]
	\arrow[from=22-5, to=20-3]
	\arrow[from=22-5, to=22-7]
	\arrow[from=22-7, to=20-5]
	\arrow[from=22-7, to=22-9]
	\arrow[from=22-9, to=20-7]
	\arrow[from=22-9, to=22-11]
	\arrow[from=22-11, to=20-9]
	\arrow[from=22-11, to=22-13]
	\arrow[from=22-13, to=20-11]
	\arrow[from=22-13, to=22-15]
	\arrow[from=22-15, to=20-13]
	\arrow[from=22-15, to=22-17]
	\arrow[from=22-17, to=20-15]
	\arrow[from=22-17, to=22-21]
	\arrow[no head, from=23-1, to=23-20, teal, rounded corners, to path=-| (\tikztotarget)]
	\arrow[no head, from=23-20, to=25-20, teal, rounded corners, to path=-| (\tikztotarget)]
\end{tikzcd}
}
\caption{The initial seed for $\flag_{4,6,9;12}$.}
\label{fig:initial seed for Fl4_6_9_12}
\end{figure}

\section{\texorpdfstring{Application: cluster structures on $\mathcal{MT}_n$ and $\mathcal{SH}_n$ from Grassmannians}{Application: cluster structures on MT\_n and SH\_n from Grassmannians}}
\label{sec:Cluster structures on MTn and SHn from Grassmannians}

In this section, as special cases of general partial flag varieties in \S\ref{sec:mutation sequence}, we focus on $\flag_{2,4;n}$, respectively $\flag_{2,n-2;n}$, which are of particular interest for the momentum twistor variety $\mathcal{MT}_n$, respectively for the spinor helicity variety $\mathcal{SH}_n$. We exhibit an explicit mutation sequence from the initial seed of the Grassmannian $\Gr_{4;n+2}$, respectively $\Gr_{n-2;2n-4}$, to a seed which contains $(Q_{2,4;n},\varphi^*({\bf x}_{2,4;n}))$, respectively $(Q_{2,n-2;n},\varphi^*({\bf x}_{2,n-2;n}))$, as a restricted seed. 
 
\subsection{Momentum twistor varieties}

\begin{figure}
    \centering
\adjustbox{scale=0.5}{
\begin{tikzcd}
	\begin{array}{c} \boxed{\begin{matrix} n-1 \\ n \\ n+1 \\ n+2 \end{matrix}} (16) \end{array} & \begin{array}{c} \boxed{\begin{matrix} 1 \\ n \\ n+1 \\ n+2 \end{matrix}} (12) \end{array} & \begin{array}{c} \boxed{\begin{matrix} 1 \\ 2 \\ n+1 \\ n+2 \end{matrix}}(8) \end{array} & \begin{array}{c} \boxed{\begin{matrix} 1 \\ 2 \\ 3 \\ n+2 \end{matrix}} (4) \end{array} \\
	\begin{array}{c} \boxed{\begin{matrix} 1 \\ n-1 \\ n \\ n+1 \end{matrix}} (15) \end{array} & \begin{array}{c} \textcolor{red}{\begin{matrix} 1 \\ n-1 \\ n \\ n+1 \end{matrix}} (11) \end{array} & \begin{array}{c} \textcolor{red}{\begin{matrix} 1 \\ 2 \\ n \\ n+1 \end{matrix}} (7) \end{array} & \begin{array}{c} \begin{matrix} 1 \\ 2 \\ 3 \\ n+1 \end{matrix} (3) \end{array} \\
	\begin{array}{c} \boxed{\begin{matrix} n-3 \\ n-2 \\ n-1 \\ n \end{matrix}} (14) \end{array} & \begin{array}{c} \begin{matrix} 1 \\ n-2 \\ n-1 \\ n \end{matrix} (10) \end{array} & \begin{array}{c} \begin{matrix} 1 \\ 2 \\ n-1 \\ n \end{matrix} (6) \end{array} & \begin{array}{c} \begin{matrix} 1 \\ 2 \\ 3 \\ n \end{matrix} (2) \end{array} \\
	\begin{array}{c} \boxed{\begin{matrix} n-4 \\ n-3 \\ n-2 \\ n-1 \end{matrix}} \end{array} & \begin{array}{c} \begin{matrix} 1 \\ n-3 \\ n-2 \\ n-1 \end{matrix} \end{array} & \begin{array}{c} \begin{matrix} 1 \\ 2 \\ n-2 \\ n-1 \end{matrix} \end{array} & \begin{array}{c} \begin{matrix} 1 \\ 2 \\ 3 \\ n-1 \end{matrix} \end{array} \\
	\vdots & \vdots & \vdots & \vdots \\
	\begin{array}{c} \boxed{\begin{matrix} 2 \\ 3 \\ 4 \\ 5 \end{matrix}} (13) \end{array} & \begin{array}{c} \begin{matrix} 1 \\ 3\\4\\5 \end{matrix} (9) \end{array} & \begin{array}{c} \begin{matrix} 1 \\ 2\\4\\5 \end{matrix} (5) \end{array} & \begin{array}{c} \begin{matrix} 1 \\ 2\\3\\5 \end{matrix} (1) \end{array} & \begin{array}{c} \boxed{\begin{matrix} 1 \\ 2 \\ 3 \\ 4 \end{matrix}} (17) \end{array}
	\arrow[from=1-2, to=2-2]
	\arrow[from=2-1, to=2-2]
	\arrow[from=2-2, to=1-1]
	\arrow[from=2-2, to=2-3]
	\arrow[from=2-2, to=3-2]
	\arrow[from=2-3, to=1-2]
	\arrow[from=2-3, to=2-4]
	\arrow[from=2-3, to=3-3]
	\arrow[from=2-4, to=1-3]
	\arrow[from=2-4, to=3-4]
	\arrow[from=3-1, to=3-2]
	\arrow[from=3-2, to=2-1]
	\arrow[from=3-2, to=3-3]
	\arrow[from=3-2, to=4-2]
	\arrow[from=3-3, to=2-2]
	\arrow[from=3-3, to=3-4]
	\arrow[from=3-3, to=4-3]
	\arrow[from=3-4, to=2-3]
	\arrow[from=3-4, to=4-4]
	\arrow[from=4-1, to=4-2]
	\arrow[from=4-1, to=5-1]
	\arrow[from=4-2, to=3-1]
	\arrow[from=4-2, to=4-3]
	\arrow[from=4-2, to=5-2]
	\arrow[from=4-3, to=3-2]
	\arrow[from=4-3, to=4-4]
	\arrow[from=4-3, to=5-3]
	\arrow[from=4-4, to=3-3]
	\arrow[from=4-4, to=5-4]
	\arrow[from=5-1, to=5-2]
	\arrow[from=5-2, to=4-1]
	\arrow[from=5-2, to=5-3]
	\arrow[from=5-2, to=6-2]
	\arrow[from=5-3, to=4-2]
	\arrow[from=5-3, to=5-4]
	\arrow[from=5-3, to=6-3]
	\arrow[from=5-4, to=4-3]
	\arrow[from=5-4, to=6-4]
	\arrow[from=6-1, to=6-2]
	\arrow[from=6-2, to=5-1]
	\arrow[from=6-2, to=6-3]
	\arrow[from=6-3, to=5-2]
	\arrow[from=6-3, to=6-4]
	\arrow[from=6-4, to=5-3]
	\arrow[from=6-4, to=6-5]
\end{tikzcd}
\begin{tikzcd}
	\begin{array}{c} \boxed{\begin{matrix} n-2 \\ n-1 \\ n \\ n+1 \end{matrix}} \end{array} (15) & \begin{array}{c} \boxed{\begin{matrix} n-1 \\ n \end{matrix}} \end{array} (16) & \begin{array}{c} \boxed{\begin{matrix} 1 \\ n \end{matrix}} \end{array} (17) & \begin{array}{c} \boxed{\begin{matrix} 1 \\ 2 \end{matrix}} \end{array} (18) \\
	\begin{array}{c} \boxed{\begin{matrix} 1 \\ 2 \\ 3 \\ n+1 \end{matrix}} (3) \end{array} & \begin{array}{c} \boxed{\textcolor{red}{\begin{matrix} 1 & n-2 \\ 2 & n-1 \\ 3 \\ n \end{matrix}}} \end{array} (7)  & \begin{array}{c} \textcolor{red}{\begin{matrix} 1 & 1 \\ 2 & n-1 \\ 3 \\ n \end{matrix}} \end{array} (11) && \begin{array}{c} \begin{matrix} 1  \\ 2 \\ 3 \\ n \end{matrix} \end{array} (2) \\
	\begin{array}{c} \boxed{\begin{matrix} 1 \\ 2 \\ 3 \\ n+2 \end{matrix}} (4) \end{array} & \begin{array}{c} \boxed{\begin{matrix} n-3 \\ n-2 \\ n-1 \\ n \end{matrix}} \end{array} (14) & \begin{array}{c} \begin{matrix} 1 \\ n-2 \\ n-1 \\ n \end{matrix} \end{array} (10) & \begin{array}{c} \begin{matrix} 1 \\ 2 \\ n-1 \\ n \end{matrix} \end{array} (6) \\
	& \begin{array}{c} \boxed{\begin{matrix} n-4 \\ n-3 \\ n-2 \\ n-1 \end{matrix}} \end{array} & \begin{array}{c} \begin{matrix} 1 \\ n-3 \\ n-2 \\ n-1 \end{matrix} \end{array} & \begin{array}{c} \begin{matrix} 1 \\ 2 \\ n-2 \\ n-1 \end{matrix} \end{array} & \begin{array}{c} \begin{matrix} 1 \\ 2 \\ 3 \\ n-1 \end{matrix} \end{array} \\
	& \vdots & \vdots & \vdots & \vdots \\
	& \begin{array}{c} \boxed{\begin{matrix} 2\\3\\4\\5 \end{matrix}} \end{array} (13) & \begin{array}{c} \begin{matrix} 1\\3\\4\\5 \end{matrix} \end{array} (9) & \begin{array}{c} \begin{matrix} 1\\2\\4\\5 \end{matrix} \end{array} (5) & \begin{array}{c} \begin{matrix} 1\\2\\3\\5 \end{matrix} \end{array} (1) & \begin{array}{c} \boxed{\begin{matrix} 1 \\ 2 \\ 3 \\ 4 \end{matrix}} \end{array} (17)
	\arrow[from=1-1, to=1-2]
	\arrow[from=1-1, to=2-1]
	\arrow[from=1-2, to=2-2]
	\arrow[from=1-3, to=2-3]
	\arrow[from=1-4, to=1-3]
	\arrow[from=1-4, to=3-4]
	\arrow[from=2-1, to=2-2]
	\arrow[from=2-2, to=1-1]
	\arrow[from=2-2, to=2-3]
	\arrow[from=2-3, to=1-2]
	\arrow[from=2-3, to=1-4]
	\arrow[from=2-3, to=2-5]
	\arrow[from=2-3, to=3-3]
	\arrow[from=2-5, to=1-3]
	\arrow[from=2-5, to=4-5]
	\arrow[from=3-1, to=2-1]
	\arrow[from=3-2, to=3-3]
	\arrow[from=3-2, to=4-2]
	\arrow[from=3-3, to=2-2]
	\arrow[from=3-3, to=3-4]
	\arrow[from=3-3, to=4-3]
	\arrow[from=3-4, to=2-3]
	\arrow[from=3-4, to=4-4]
	\arrow[from=4-2, to=4-3]
	\arrow[from=4-2, to=5-2]
	\arrow[from=4-3, to=3-2]
	\arrow[from=4-3, to=4-4]
	\arrow[from=4-3, to=5-3]
	\arrow[from=4-4, to=3-3]
	\arrow[from=4-4, to=4-5]
	\arrow[from=4-4, to=5-4]
	\arrow[from=4-5, to=3-4]
	\arrow[from=4-5, to=5-5]
	\arrow[from=5-2, to=5-3]
	\arrow[from=5-2, to=6-2]
	\arrow[from=5-3, to=4-2]
	\arrow[from=5-3, to=5-4]
	\arrow[from=5-3, to=6-3]
	\arrow[from=5-4, to=4-3]
	\arrow[from=5-4, to=5-5]
	\arrow[from=5-4, to=6-4]
	\arrow[from=5-5, to=4-4]
	\arrow[from=5-5, to=6-5]
	\arrow[from=6-2, to=6-3]
	\arrow[from=6-3, to=5-2]
	\arrow[from=6-3, to=6-4]
	\arrow[from=6-4, to=5-3]
	\arrow[from=6-4, to=6-5]
	\arrow[from=6-5, to=5-4]
	\arrow[from=6-5, to=6-6]
\end{tikzcd}
}
    \caption{LHS: The initial seed for Grassmannian $\Gr_{4;n+2}$.
    RHS: a seed obtained by mutating (11), (7), (11) of the seed on LHS and freeze (7). The full subquiver on RHS on all vertices but (3), (4), (15) coincides with the initial seed for $\flag_{2,4;n}$.}
    \label{fig:initial seed for F24n-G4n+2}
\end{figure}

The initial seeds for $\flag_{2,4;n}$ and $\Gr_{4;n+2}$ are depicted in Figure~\ref{fig:initial seed for F24n-G4n+2}. 
Notice that out of the $4n-7$ initial cluster variables of $\Gr_{4;n+2}$ all but five coincide with (images of) initial cluster variables of $\flag_{2,4;n}$.
Hence, we only need to recover the two missing initial cluster variables, respectively their images under \eqref{eq:embed}:
    \[
    \boxed{\begin{smallmatrix}
        1&n-2\\2&n-1\\3&n+1\\n&n+2
    \end{smallmatrix}} \quad \text{and}\quad
    \begin{smallmatrix}
        1&1\\2&n-1\\3&n+1\\n&n+2
    \end{smallmatrix}.
    \]
   The mutation sequence (11), (7), (11) yields the missing cluster variables, see Figure~\ref{fig:initial seed for F24n-G4n+2}. 
    This is followed by freezing 
        \[
    {\begin{smallmatrix}
        1&n-2\\2&n-1\\3&n+1\\n&n+2
    \end{smallmatrix}} 
    \]
    which is the mutable vertex (7) in the Grassmannian. Now all tableaux but
    \[
    \boxed{\begin{smallmatrix}
        n-2\\n-1\\n\\n+1
    \end{smallmatrix}}, \quad 
    \boxed{\begin{smallmatrix}
        1\\2\\3\\n+2
    \end{smallmatrix}}
    \quad \text{and}\quad
    \begin{smallmatrix}
        1\\2\\3\\n+1
    \end{smallmatrix}.
    \]
    coincide with the images of the initial tableaux for $\flag_{2,4;n}$. 
    Furthermore, the full subquiver on all vertices but the ones corresponding to the above mentioned three tableaux is the initial quiver for $\flag_{2,4;n}$.
    
\begin{figure}
     \centering
 \adjustbox{scale=0.5}{\begin{tikzcd}
	8 &&& {} \\
	\\
	7 & {} \\
	\boxed{\begin{array}{c} \begin{matrix} 7 \\ 8 \end{matrix} \end{array}} && \boxed{\begin{array}{c} \begin{matrix} 1 \\ 8 \end{matrix} \end{array}} && \boxed{\begin{array}{c} \begin{matrix} 1 \\ 2 \end{matrix} \end{array}} \\
	6 &&&&&&&&&&& {} \\
	\boxed{\begin{array}{c} \begin{matrix} 1 &6 \\ 2&7 \\ 3 \\4 \\5 \\8 \end{matrix} \end{array}} && \begin{array}{c} \begin{matrix} 1 &1 \\ 2&7 \\ 3 \\4 \\5 \\8 \end{matrix} \end{array} &&&&&&&& \begin{array}{c} \begin{matrix} 1 \\2\\3 \\4 \\5 \\8 \end{matrix} \end{array} \\
	5 &&&&&&&&& {} \\
	\boxed{\begin{array}{c} \begin{matrix} 1 &5 \\ 2&6 \\ 3 \\4 \\7 \\8 \end{matrix} \end{array}} && \begin{array}{c} \begin{matrix} 1 &1 \\ 2&6 \\ 3 \\4 \\7 \\8 \end{matrix} \end{array} &&&&&& \begin{array}{c} \begin{matrix} 1 \\2\\3 \\4 \\7 \\8 \end{matrix} \end{array} \\
	4 &&&&&&& {} \\
	\boxed{\begin{array}{c} \begin{matrix} 1 &4 \\ 2&5 \\ 3 \\6 \\7 \\8 \end{matrix} \end{array}} && \begin{array}{c} \begin{matrix} 1 &1 \\ 2&5 \\ 3 \\6 \\7 \\8 \end{matrix} \end{array} &&&& \begin{array}{c} \begin{matrix} 1 \\2\\3 \\6 \\7 \\8 \end{matrix} \end{array} \\
	3 &&&&& {} \\
	\boxed{\begin{array}{c} \begin{matrix} 3 \\4\\5 \\6 \\7 \\8 \end{matrix} \end{array}} && \begin{array}{c} \begin{matrix} 1 \\4\\5 \\6 \\7 \\8 \end{matrix} \end{array} && \begin{array}{c} \begin{matrix} 1 \\2\\5 \\6 \\7 \\8 \end{matrix} \end{array} \\
	2 &&&&&&&&&&&&&&& {} \\
	\boxed{\begin{array}{c} \begin{matrix} 2\\3 \\4\\5 \\6 \\7  \end{matrix} \end{array}} && \begin{array}{c} \begin{matrix} 1\\3 \\4\\5 \\6 \\7  \end{matrix} \end{array} && \begin{array}{c} \begin{matrix} 1\\2 \\4\\5 \\6 \\7  \end{matrix} \end{array} && \begin{array}{c} \begin{matrix} 1\\2 \\3\\5 \\6 \\7  \end{matrix} \end{array} && \begin{array}{c} \begin{matrix} 1\\2 \\3\\4 \\6 \\7  \end{matrix} \end{array} && \begin{array}{c} \begin{matrix} 1\\2 \\3\\4 \\5 \\7  \end{matrix} \end{array} &&& \boxed{\begin{array}{c} \begin{matrix} 1\\2\\3 \\4\\5 \\6  \end{matrix} \end{array}} \\
	1 &&&&&&&&&&&&& {} \\
	& 1 && 2 && 3 && 4 && 5 && 6 && 7 && 8
	%\arrow[no head, from=1-1, to=1-4]
	%\arrow[no head, from=1-4, to=16-4]
    \arrow[teal, no head, from=1-1, to=16-4, rounded corners, to path=-| (\tikztotarget)]
	%\arrow[no head, from=3-1, to=3-2]
	%\arrow[no head, from=3-2, to=16-2]
    \arrow[teal, no head, from=3-1, to=16-2, rounded corners, to path=-| (\tikztotarget)]
	\arrow[from=4-3, to=6-3]
	\arrow[from=4-5, to=12-5]
	%\arrow[no head, from=5-1, to=5-12]
	%\arrow[no head, from=5-12, to=16-12]
    \arrow[teal, no head, from=5-1, to=16-12, rounded corners, to path=-| (\tikztotarget)]
	\arrow[from=6-1, to=6-3]
	\arrow[from=6-3, to=4-1]
	\arrow[from=6-3, to=6-11]
	\arrow[from=6-3, to=8-3]
	\arrow[from=6-11, to=4-3]
	\arrow[from=6-11, to=14-11]
	%\arrow[no head, from=7-1, to=7-10]
	%\arrow[no head, from=7-10, to=16-10]
    \arrow[teal, no head, from=7-1, to=16-10, rounded corners, to path=-| (\tikztotarget)]
	\arrow[from=8-1, to=8-3]
	\arrow[from=8-3, to=6-1]
	\arrow[from=8-3, to=8-9]
	\arrow[from=8-3, to=10-3]
	\arrow[from=8-9, to=6-3]
	\arrow[from=8-9, to=14-9]
	%\arrow[no head, from=9-1, to=9-8]
	%\arrow[no head, from=9-8, to=16-8]
    \arrow[teal, no head, from=9-1, to=16-8, rounded corners, to path=-| (\tikztotarget)]
	\arrow[from=10-1, to=10-3]
	\arrow[from=10-3, to=4-5]
	\arrow[from=10-3, to=8-1]
	\arrow[from=10-3, to=10-7]
	\arrow[from=10-3, to=12-3]
	\arrow[from=10-7, to=8-3]
	\arrow[from=10-7, to=14-7]
	%\arrow[no head, from=11-1, to=11-6]
	%\arrow[no head, from=11-6, to=16-6]
    \arrow[teal, no head, from=11-1, to=16-6, rounded corners, to path=-| (\tikztotarget)]
	\arrow[from=12-1, to=12-3]
	\arrow[from=12-3, to=10-1]
	\arrow[from=12-3, to=12-5]
	\arrow[from=12-3, to=14-3]
	\arrow[from=12-5, to=10-3]
	\arrow[from=12-5, to=14-5]
	%\arrow[no head, from=13-1, to=13-16]
	%\arrow[no head, from=13-16, to=16-16]
    \arrow[teal, no head, from=13-1, to=16-16, rounded corners, to path=-| (\tikztotarget)]
	\arrow[from=14-1, to=14-3]
	\arrow[from=14-3, to=12-1]
	\arrow[from=14-3, to=14-5]
	\arrow[from=14-5, to=12-3]
	\arrow[from=14-5, to=14-7]
	\arrow[from=14-7, to=12-5]
	\arrow[from=14-7, to=14-9]
	\arrow[from=14-9, to=10-7]
	\arrow[from=14-9, to=14-11]
	\arrow[from=14-11, to=8-9]
	\arrow[from=14-11, to=14-14]
	%\arrow[no head, from=15-1, to=15-14]
	%\arrow[no head, from=15-14, to=16-14]
    \arrow[teal, no head, from=15-1, to=16-14, rounded corners, to path=-| (\tikztotarget)]
\end{tikzcd}
}
 \caption{The initial seed for the partial flag variety $\flag_{2,6;8}$.}
     \label{fig:initial seed for Fl268}
 \end{figure}

\subsection{Spinor helicity varieties} \label{subsec:Spinor helicity varieties}
As already mentioned in the introduction the spinor helicity variety $\mathcal{SH}_n$ is isomorphic to the partial flag variety $\flag_{2,n-2;n}$ with isomorphism given in \eqref{eq:SH to flag2,n-2,n}. The initial seed for $n=8$ and $\flag_{2,6;8}$ is displayed in Figure~\ref{fig:initial seed for Fl268}. 
%Recall, that the images of the initial tableaux under \eqref{eq:embed} are tableaux with six rows obtained by applying $\phi$ defined in \eqref{eq:map of tableaux}, which amounts to filling up columns with two rows by adding the numbers $n+1,\dots,2n-4$.

{\bf The initial quiver} for $\Gr_{n-2;2n-4}$ consists of a $(n-2)\times(n-2)$-grid with one additional vertex at the south-east corner. The first row and the first column of the grid consist of frozen vertices.  All mutations we perform take place in a $(n-5)\times (n-4)$ grid that is north-west bound inside the initial quiver. We call the latter the \emph{mutation grid}. An example for $n=8$ and $\Gr_{6;12}$ is given in Figure~\ref{fig:initial seed for Gr612}. 

\begin{figure}
    \centering
\adjustbox{scale=0.5}{
\begin{tikzcd}
	\begin{array}{c} {\color{teal}\boxed{\begin{matrix} 7 \\ 8 \\ 9 \\ 10 \\ 11 \\ 12 \end{matrix}}} (36) \end{array} & \begin{array}{c} {\color{teal}\boxed{\begin{matrix} 1 \\  8 \\ 9 \\ 10 \\ 11 \\ 12 \end{matrix}}} (30) \end{array} & \begin{array}{c} {\color{teal}\boxed{\begin{matrix} 1 \\ 2 \\ 9 \\ 10 \\ 11 \\ 12 \end{matrix}}} (24) \end{array} & \begin{array}{c} \boxed{\begin{matrix} 1 \\ 2 \\ 3 \\ 10 \\ 11 \\ 12 \end{matrix}} (18) \end{array} & \begin{array}{c} \boxed{\begin{matrix} 1 \\ 2 \\ 3 \\ 4 \\ 11 \\ 12 \end{matrix}} (12) \end{array} & \begin{array}{c} \boxed{\begin{matrix} 1 \\ 2 \\ 3 \\ 4 \\ 5 \\ 12 \end{matrix}} (6) \end{array} \\
	\begin{array}{c} \boxed{\begin{matrix} 6 \\ 7 \\ 8 \\ 9 \\ 10 \\ 11 \end{matrix}} (35) \end{array} & \begin{array}{c} {\color{red}\begin{matrix} 1 \\  7 \\ 8 \\ 9 \\ 10 \\ 11 \end{matrix}} (29) \end{array} & \begin{array}{c} {\color{red}\begin{matrix} 1 \\ 2 \\ 8 \\ 9 \\ 10 \\ 11 \end{matrix}} (23) \end{array} & \begin{array}{c} {\color{red}\begin{matrix} 1 \\ 2 \\ 3 \\ 9 \\ 10 \\ 11 \end{matrix}} (17) \end{array} & \begin{array}{c} {\color{red}\begin{matrix} 1 \\ 2 \\ 3 \\ 4 \\ 10 \\ 11 \end{matrix}} (11) \end{array} & \begin{array}{c} \begin{matrix} 1 \\ 2 \\ 3 \\ 4 \\ 5 \\ 11 \end{matrix} (5) \end{array} \\
	\begin{array}{c} \boxed{\begin{matrix} 5 \\ 6 \\ 7 \\ 8 \\ 9 \\ 10 \end{matrix}} (34) \end{array} & \begin{array}{c} {\color{red}\begin{matrix} 1 \\  6 \\ 7 \\ 8 \\ 9 \\ 10 \end{matrix}} (28) \end{array} & \begin{array}{c} {\color{red}\begin{matrix} 1 \\ 2 \\ 7 \\ 8 \\ 9 \\ 10 \end{matrix}} (22) \end{array} & \begin{array}{c} {\color{red}\begin{matrix} 1 \\ 2 \\ 3 \\ 8 \\ 9 \\ 10 \end{matrix}} (16) \end{array} & \begin{array}{c} {\color{red}\begin{matrix} 1 \\ 2 \\ 3 \\ 4 \\ 9 \\ 10 \end{matrix}} (10) \end{array} & \begin{array}{c} \begin{matrix} 1 \\ 2 \\ 3 \\ 4 \\ 5 \\ 10 \end{matrix} (4) \end{array} \\
	\begin{array}{c} \boxed{\begin{matrix} 4 \\ 5 \\ 6 \\ 7 \\ 8 \\ 9 \end{matrix}} (33) \end{array} & \begin{array}{c} {\color{red}\begin{matrix} 1 \\  5 \\ 6 \\ 7 \\ 8 \\ 9 \end{matrix}} (27) \end{array} & \begin{array}{c} {\color{red}\begin{matrix} 1 \\ 2 \\  6 \\ 7 \\ 8 \\ 9 \end{matrix}} (21) \end{array} & \begin{array}{c} {\color{red}\begin{matrix} 1 \\ 2 \\ 3 \\ 7 \\ 8 \\ 9 \end{matrix}} (15) \end{array} & \begin{array}{c} {\color{red}\begin{matrix} 1 \\ 2 \\ 3 \\ 4 \\ 8 \\ 9 \end{matrix}} (9) \end{array} & \begin{array}{c} \begin{matrix} 1 \\ 2 \\ 3 \\ 4 \\ 5 \\ 9 \end{matrix} (3) \end{array} \\
	\begin{array}{c} {\color{teal}\boxed{\begin{matrix} 3 \\ 4 \\ 5 \\ 6 \\ 7 \\ 8 \end{matrix}}} (32) \end{array} & \begin{array}{c} {\color{teal}\begin{matrix} 1 \\ 4 \\ 5 \\ 6 \\ 7 \\ 8 \end{matrix}} (26) \end{array} & \begin{array}{c} {\color{teal}\begin{matrix} 1 \\ 2 \\ 5 \\ 6 \\ 7 \\ 8 \end{matrix}} (20) \end{array} & \begin{array}{c} {\color{teal}\begin{matrix} 1 \\ 2 \\ 3 \\ 6 \\ 7 \\ 8 \end{matrix}} (14) \end{array} & \begin{array}{c} {\color{teal}\begin{matrix} 1 \\ 2 \\ 3 \\ 4 \\ 7 \\ 8 \end{matrix}} (8) \end{array} & \begin{array}{c} {\color{teal}\begin{matrix} 1 \\ 2 \\ 3 \\ 4 \\ 5 \\ 8 \end{matrix}} (2) \end{array} \\
	\begin{array}{c} {\color{teal}\boxed{\begin{matrix} 2 \\ 3 \\ 4 \\ 5 \\ 6 \\ 7 \end{matrix}}} (31) \end{array} & \begin{array}{c} {\color{teal}\begin{matrix} 1 \\ 3 \\ 4 \\ 5 \\ 6 \\ 7 \end{matrix}} (25) \end{array} & \begin{array}{c} {\color{teal}\begin{matrix} 1 \\ 2 \\ 4 \\ 5 \\ 6 \\ 7 \end{matrix}} (19) \end{array} & \begin{array}{c} {\color{teal}\begin{matrix} 1 \\ 2 \\ 3 \\ 5 \\ 6 \\ 7 \end{matrix}} (13) \end{array} & \begin{array}{c} {\color{teal}\begin{matrix} 1 \\ 2 \\ 3 \\ 4 \\ 6 \\ 7 \end{matrix}} (7) \end{array} & \begin{array}{c} {\color{teal}\begin{matrix} 1 \\ 2 \\ 3 \\ 4 \\ 5 \\ 7 \end{matrix}} (1) \end{array} & \begin{array}{c} {\color{teal}\boxed{\begin{matrix} 1 \\ 2 \\ 3 \\ 4 \\ 5 \\ 6 \end{matrix}}} (37) \end{array}
	\arrow[from=1-2, to=2-2]
	\arrow[from=1-3, to=2-3]
	\arrow[from=1-4, to=2-4]
	\arrow[from=1-5, to=2-5]
	\arrow[from=1-6, to=2-6]
	\arrow[from=2-1, to=2-2]
	\arrow[from=2-2, to=1-1]
	\arrow[from=2-2, to=2-3]
	\arrow[from=2-2, to=3-2]
	\arrow[from=2-3, to=1-2]
	\arrow[from=2-3, to=2-4]
	\arrow[from=2-3, to=3-3]
	\arrow[from=2-4, to=1-3]
	\arrow[from=2-4, to=2-5]
	\arrow[from=2-4, to=3-4]
	\arrow[from=2-5, to=1-4]
	\arrow[from=2-5, to=2-6]
	\arrow[from=2-5, to=3-5]
	\arrow[from=2-6, to=1-5]
	\arrow[from=2-6, to=3-6]
	\arrow[from=3-1, to=3-2]
	\arrow[from=3-2, to=2-1]
	\arrow[from=3-2, to=3-3]
	\arrow[from=3-2, to=4-2]
	\arrow[from=3-3, to=2-2]
	\arrow[from=3-3, to=3-4]
	\arrow[from=3-3, to=4-3]
	\arrow[from=3-4, to=2-3]
	\arrow[from=3-4, to=3-5]
	\arrow[from=3-4, to=4-4]
	\arrow[from=3-5, to=2-4]
	\arrow[from=3-5, to=3-6]
	\arrow[from=3-5, to=4-5]
	\arrow[from=3-6, to=2-5]
	\arrow[from=3-6, to=4-6]
	\arrow[from=4-1, to=4-2]
	\arrow[from=4-2, to=3-1]
	\arrow[from=4-2, to=4-3]
	\arrow[from=4-2, to=5-2]
	\arrow[from=4-3, to=3-2]
	\arrow[from=4-3, to=4-4]
	\arrow[from=4-3, to=5-3]
	\arrow[from=4-4, to=3-3]
	\arrow[from=4-4, to=4-5]
	\arrow[from=4-4, to=5-4]
	\arrow[from=4-5, to=3-4]
	\arrow[from=4-5, to=4-6]
	\arrow[from=4-5, to=5-5]
	\arrow[from=4-6, to=3-5]
	\arrow[from=4-6, to=5-6]
	\arrow[from=5-1, to=5-2]
	\arrow[from=5-2, to=4-1]
	\arrow[from=5-2, to=5-3]
	\arrow[from=5-2, to=6-2]
	\arrow[from=5-3, to=4-2]
	\arrow[from=5-3, to=5-4]
	\arrow[from=5-3, to=6-3]
	\arrow[from=5-4, to=4-3]
	\arrow[from=5-4, to=5-5]
	\arrow[from=5-4, to=6-4]
	\arrow[from=5-5, to=4-4]
	\arrow[from=5-5, to=5-6]
	\arrow[from=5-5, to=6-5]
	\arrow[from=5-6, to=4-5]
	\arrow[from=5-6, to=6-6]
	\arrow[from=6-1, to=6-2]
	\arrow[from=6-2, to=5-1]
	\arrow[from=6-2, to=6-3]
	\arrow[from=6-3, to=5-2]
	\arrow[from=6-3, to=6-4]
	\arrow[from=6-4, to=5-3]
	\arrow[from=6-4, to=6-5]
	\arrow[from=6-5, to=5-4]
	\arrow[from=6-5, to=6-6]
	\arrow[from=6-6, to=5-5]
	\arrow[from=6-6, to=6-7]
\end{tikzcd} }
    \caption{The initial seed for the Grassmannian $\Gr_{6,12}$. The central $3\times4$-grid from vertex (9) to (29) marked in \textcolor{red}{red} is the mutation grid. After the mutation sequence we freeze vertices (29), (16) and (10). 
    Vertices marked in \textcolor{teal}{teal} are those that  already correspond to initial cluster variables for $\mathcal F_{2,6;8}$.}
    \label{fig:initial seed for Gr612}
\end{figure}

%{\bf The mutation sequence} is similar to a maximal green sequence for the Grassmannian \cite[\S11]{Marsh-Scott} and we use similar notation: a \emph{page} in the sequence refers to a subsequence of consecutive mutations so that each vertex in the mutation grid is mutated at most once. 

Let $a=d_2-d_1-1=n-5$, $b=d_2-2=n-4$, $c=d_1=2$. The mutation sequence is as follows. 
\begin{itemize}
\item The first page: 
\begin{align*}
& (a,1), (a,2), \ldots, (a,b), \\
& (a-1,1), (a-1,2), \ldots, (a-1,b), \\
& \vdots \\
& (2,1), (2,2), \ldots, (2,b), \\
& (1,1), (1,2).
\end{align*}
The cluster variable $\begin{matrix}
1 & 4 \\ 2 & 5 \\ 3 \\ 6 \\ \vdots \\ n
\end{matrix}$ appears at position $(1,2)$. 

\item The second page: 
\begin{align*}
& (a,1), (a,2), \ldots, (a,b), \\
& (a-1,1), (a-1,2), \ldots, (a-1,b), \\
& \vdots \\
& (3,1), (3,2), \ldots, (3,b), \\
& (2,1), (2,2), \\
& (1,1).
\end{align*}
The two cluster variables $\begin{matrix}
1 & 5 \\ \vdots & 6 \\ 4 \\ 7 \\ \vdots \\ n
\end{matrix}$ and $\begin{matrix}
1 & 1 \\ 2 & 5 \\  3  \\ 6 \\ \vdots \\ n
\end{matrix}$ appears at positions $(2,2)$ and $(1,1)$ respectively.

\item Continue this procedure. At the $i$th ($i \in [1,n-4]$) page:
\begin{align*}
& (a,1), (a,2), \ldots, (a,b), \\
& (a-1,1), (a-1,2), \ldots, (a-1,b), \\
& \vdots \\
& (i+1,1), (i+1,2), \ldots, (i+1,b), \\
& (i,1), (i,2), \\
& (i-1,1).
\end{align*}
where the last line is empty if $i=1$. The cluster variables 
$$
\begin{matrix}
1 & 1 \\ \vdots & \vdots \\ \vdots & j_2 \\ j_1+3 & j_1+j_2+4 \\ j_1+6 & \vdots \\ \vdots & j_1+5 \\  n
\end{matrix}
$$ 
appears at positions $(j_1+1, 2-j_2)$, $j_1+j_2=i-1$, $j_1,j_2 \in \ZZ_{\ge 0}$, $ j_2 \le 1$, $i \in [1, n-4]$, respectively.
\end{itemize}

The number of mutation steps is $\frac{1}{2}\left(n - 5\right)\, \left(n^2 - 10\, n + 30\right)$, see Lemma \ref{lem:number of mutation steps for 2 step partial flag varieties}. We freeze the cluster variables at positions $(i,2)$, $i \in [a]$, $a=n-5$. Let $s'$ denote the seed of the Grassmannian obtained from the initial seed by applying the above mutation sequence and let $Q'$ be its quiver. The images of the initial cluster variables for $\flag_{2,n-2;n}$ are contained in the set of cluster variables of $s'$. The seed $(Q_{2,n-2;n},\varphi^*({\bf x}_{2,n-2;n}))$ is a restricted seed of $s'$. We can delete $(n-3)(n-5)$ vertices to obtain the initial seed for $\flag_{2,n-2;n}$. The final seed of the example $\flag_{2,6;8}$ is shown in Figure \ref{fig:mut seq G612}. 

\begin{figure}
     \centering
 \adjustbox{scale=0.5}{\begin{tikzcd}
	{(4)} && {\boxed{(34)}} && {\boxed{(35)}} & {\boxed{\textcolor{red}{(36)}}} & {\boxed{\textcolor{red}{(30)}}} & {\boxed{\textcolor{red}{(24)}}} \\
	&&& {(3)} \\
	&& {(9)} && {(15)} & {\boxed{\textcolor{red}{(21)}}} & {\textcolor{red}{(27)}} &&&& {\textcolor{red}{(2)}} \\
	& {\boxed{(33)}} & {(10)} && {(16)} & {\boxed{\textcolor{red}{(22)}}} & {\textcolor{red}{(28)}} &&& {\textcolor{red}{(8)}} \\
	{(5)} && {(11)} && {(17)} & {\boxed{\textcolor{red}{(23)}}} & {\textcolor{red}{(29)}} && {\textcolor{red}{(14)}} \\
	{\boxed{(6)}} && {\boxed{(12)}} && {\boxed{(18)}} & {\boxed{\textcolor{red}{(32)}}} & {\textcolor{red}{(26)}} & {\textcolor{red}{(20)}} \\
	&&&&& {\boxed{\textcolor{red}{(31)}}} & {\textcolor{red}{(25)}} & {\textcolor{red}{(19)}} & {\textcolor{red}{(13)}} & {\textcolor{red}{(7)}} & {\textcolor{red}{(1)}} & {\boxed{\textcolor{red}{(37)}}}
	\arrow[from=1-1, to=3-3]
	\arrow[from=1-3, to=1-1]
	\arrow[from=1-3, to=1-5]
	\arrow[from=1-5, to=1-6]
	\arrow[from=1-5, to=3-5]
	\arrow[from=1-6, to=3-6]
	\arrow[from=1-7, to=3-7]
	\arrow[from=1-8, to=1-7]
	\arrow[from=1-8, to=6-8]
	\arrow[from=2-4, to=3-5]
	\arrow[from=3-3, to=1-3]
	\arrow[from=3-3, to=2-4]
	\arrow[from=3-3, to=4-3]
	\arrow[from=3-5, to=3-3]
	\arrow[from=3-5, to=3-6]
	\arrow[from=3-5, to=4-5]
	\arrow[from=3-6, to=1-5]
	\arrow[from=3-6, to=3-7]
	\arrow[from=3-6, to=4-6]
	\arrow[from=3-7, to=1-6]
	\arrow[from=3-7, to=3-11]
	\arrow[from=3-7, to=4-7]
	\arrow[from=3-11, to=1-7]
	\arrow[from=3-11, to=7-11]
	\arrow[from=4-2, to=3-3]
	\arrow[from=4-3, to=3-5]
	\arrow[from=4-3, to=4-2]
	\arrow[from=4-3, to=5-3]
	\arrow[from=4-5, to=4-3]
	\arrow[from=4-5, to=4-6]
	\arrow[from=4-5, to=5-5]
	\arrow[from=4-6, to=3-5]
	\arrow[from=4-6, to=4-7]
	\arrow[from=4-6, to=5-6]
	\arrow[from=4-7, to=3-6]
	\arrow[from=4-7, to=4-10]
	\arrow[from=4-7, to=5-7]
	\arrow[from=4-10, to=3-7]
	\arrow[from=4-10, to=7-10]
	\arrow[from=5-1, to=1-1]
	\arrow[from=5-1, to=6-3]
	\arrow[from=5-3, to=4-5]
	\arrow[from=5-3, to=5-1]
	\arrow[from=5-3, to=6-5]
	\arrow[from=5-5, to=5-3]
	\arrow[from=5-5, to=5-6]
	\arrow[from=5-6, to=4-5]
	\arrow[from=5-6, to=5-7]
	\arrow[from=5-7, to=1-8]
	\arrow[from=5-7, to=4-6]
	\arrow[from=5-7, to=5-9]
	\arrow[from=5-7, to=6-7]
	\arrow[from=5-9, to=4-7]
	\arrow[from=5-9, to=7-9]
	\arrow[from=6-1, to=5-1]
	\arrow[from=6-3, to=5-3]
	\arrow[from=6-5, to=5-5]
	\arrow[from=6-6, to=6-7]
	\arrow[from=6-7, to=5-6]
	\arrow[from=6-7, to=6-8]
	\arrow[from=6-7, to=7-7]
	\arrow[from=6-8, to=5-7]
	\arrow[from=6-8, to=7-8]
	\arrow[from=7-6, to=7-7]
	\arrow[from=7-7, to=6-6]
	\arrow[from=7-7, to=7-8]
	\arrow[from=7-8, to=6-7]
	\arrow[from=7-8, to=7-9]
	\arrow[from=7-9, to=6-8]
	\arrow[from=7-9, to=7-10]
	\arrow[from=7-10, to=5-9]
	\arrow[from=7-10, to=7-11]
	\arrow[from=7-11, to=4-10]
	\arrow[from=7-12, to=7-11]
\end{tikzcd}
}
 \caption{The quiver $Q'$ obtained from the initial seed in Figure \ref{fig:initial seed for Gr612} after the mutation sequence (27), (21), (15), (9), (28), (22), (16), (10), (29), (23), (27), (21), (15), (9), (28), (22), (29), (27), (21), (28), (27) and freezing vertices (21), (22), (23). All colored vertices on the RHS yield the restricted seed corresponding to the initial seed for $\flag_{2,6;8}$ (compare to Figure~\ref{fig:initial seed for Fl268}). %\textcolor{red}{Check Lauren's book for the technical term of a quiver that splits into two parts separated by frozens.}
 }
     \label{fig:mut seq G612}
 \end{figure}

\footnotesize{
\bibliographystyle{alpha}
\bibliography{Bib}
}

\end{document}